\newcommand{\bea}{\begin{eqnarray}}
\newcommand{\eea}{\end{eqnarray}}
\newcommand{\be}{\begin{equation}}
\newcommand{\ee}{\end{equation}}
\newcommand{\beann}{\begin{eqnarray*}}
\newcommand{\eeann}{\end{eqnarray*}}
\newcommand{\beal}{\begin{align}}
\newcommand{\eal}{\end{align}}
\newcommand{\nn}{\nonumber}
\newcommand{\x}{{\bf x}}
\newcommand{\y}{{\bf y}}
\def\R{{\mathbb R}}
\def\ba{\begin{array}}
\def\ea{\end{array}}
\def\bd{\begin{displaymath}}
\def\ed{\end{displaymath}}
\def\Y{{\mathbb Y}}
\def\P{{\mathbb P}}
\def\1{{\mathbf 1}}
\def\E{{\mathbb E}}
\def\F{{\mathbb F}}
\def\N{{\mathbb N}}
\def\Z{{\mathbb Z}}
\def\m{{\mathbf m}}
\def\L{{\mathcal L}}
\def\B{{\mathcal B}}
\def\fomega{{\boldsymbol \omega}}
\DeclareMathOperator{\Cov}{Cov}
\DeclareMathOperator{\Var}{Var}
\newcommand{\argmax}{\operatornamewithlimits{arg\;max}}
\DeclareMathOperator{\e}{e}
\newtheorem{thm}{Theorem}[section]
\newtheorem{cor}{Corollary}[section]
\newtheorem{lem}{Lemma}[section]
\newtheorem{prop}{Proposition}[section]
\newtheorem{rem}{Remark}[section]
\theoremstyle{definition}
\numberwithin{equation}{section}
\begin{document}

\thispagestyle{empty}

\begin{center}
{\bf\large Likelihood Inference for a Functional Marked Point Process with Cox-Ingersoll-Ross Process Marks}
\end{center}

\vspace{4mm} \sloppy
\begin{center}
{\sc\Large Ottmar Cronie}\footnote{E-mail address to the author: ottmar@alumni.chalmers.se}\footnote{The majority of this research was conducted at Chalmers University of Technology, Sweden}\\[8pt]
{\it Anton de Kom University of Suriname\\
POB 9212, 
Leysweg 86, 
Paramaribo, 
Suriname}\\[10pt]
\end{center}
\vspace{3mm}

\begin{abstract}

This paper considers maximum likelihood inference for a functional marked point process -- the stochastic growth-interaction process -- which is an extension of the spatio-temporal growth-interaction process to the stochastic mark setting. 
As a pilot study we here consider a particular version of this extended process, which has a homogenous Poisson process as unmarked point process and shifted independent Cox-Ingersoll-Ross processes as functional marks. 
These marks have supports determined by the lifetimes generated by an immigration-death process. 
By considering a (temporally) discrete sample scheme for the marks and by considering the process' alternative evolutionary representation as a multivariate diffusion (Markovian) with jumps, the likelihood function is expressed as a product of the process' closed form transition densities. 
Additionally, under the assumption that the mark processes are started in their common stationary distribution, and under some restrictions on the underlying parameters, 
consistency and asymptotic normality of the maximum likelihood (ML) estimators 
are proved.
The ML-estimators derived from the stationarity assumption are then compared numerically to the ML-estimators derived under non-stationarity, in order to investigate the robustness of the stationarity assumption. 
To illustrate the model's use in forestry, it is fitted to a data set of Scots pines.


\end{abstract}

\noindent {\bf Keywords}:
Asymptotic normality,
Consistency,
Cox-Ingersoll-Ross process,
Functional marked point process, 
Immigration-death process,
Maximum likelihood.
\newpage
\setcounter{page}{1} \setcounter{equation}{0}

\section{Introduction}\label{SectionIntroduction}

Renshaw and S\"arkk\"a presented their spatio-temporal growth-interaction (GI) process in \cite{RS1}, and usually this process is described as a spatio-temporal point process (see e.g.\ \cite{VerreJones}) with growing and interacting marks.
The GI-process then has been further studied in a series of papers (e.g.\ \cite{Comas,Cronie,CronieYu,RenshawComas,RenshawComasMateu,RS2}), where, among other things, different inference tools have been developed.

Consider some suitable spatial study region (usually some subset of $\R^2$ or a torus). 
The basis of the GI-process is spatio-temporal point process which here will be referred to as a spatial immigration-death (SID) process. 
Specifically, the SID-process lets new individuals (points) arrive to a population (the study region) according to the jumps of a Poisson process on $\R_+$, assigns iid locations to them, which are uniformly distributed in the study region, and removes the points after iid exponential times (the temporal dynamics of the SID-process constitute a so-called immigration-death process; see e.g.\ \cite{CronieYu}). 
Moreover, once an individual has received its location, centred on its location we place a closed disk/ball with some given radius (the initial mark).
As time evolves, we let the radius of each disk grow according to a given deterministic growth structure, which depends on the radius itself as well as the locations and the radii of the other (neighbouring) marked points. 
More precisely, the structure of the simultaneous growth of the radii is given by a system of ordinary differential equations (ODEs). 
Furthermore, this system of ODEs is such that when an individual has not yet arrived to the population or if it has been removed (which is governed by the SID-process), its corresponding component of the system of ODEs is set to zero. 
Considering its possible application areas, 
one important area is the dynamical modelling of a forest stand. 
Here, as time passes, new trees arrive, grow and compete with each other until they die.
When we let each growth equation (ODE component) contain an inhibitive part, 
such that the growth of a point's radius/mark is reduced when the point is surrounded (spatially) by points with large marks, 
this inhibition reflects the natural competition for nutrients and light among trees in a forest.

The description of the GI-process as a spatio-temporal marked point process is rather vague and questions regarding an appropriate representation quickly emerge. 
Following \cite{Mateu}, we will call any marked point process where the marks are function-valued a \emph{functional marked point process}, and it has been pointed out by \cite{Mateu} that the GI-process in fact should be represented as a functional marked point process. 
More specifically, it can be treated as a marked point process (see e.g.\ \cite{DaleyVereJones,Diggle,SchneiderWeil}) for which the location space is that of a spatial Poisson process and the mark space (representing the radii of the balls/disks) is given by the space of c\`adl\`ag (right continuous with existing left limits) functions (see e.g.\ \cite{Billingsley}). 
This can be realised by letting the unmarked part of this functional marked point process be given by the collection of points scattered in the study region by the underlying SID-process during the time interval we are studying the GI-process. 
Moreover, 
we see that each mark consists of three parts: 
1) the size of the mark before the individual has arrived (zero), 
2) the size when the individual is present (governed by the ODEs), and 
3) the size once the individual is removed (zero). 
Since there clearly are jumps between parts 1) and 2), and parts 2) and 3), we see that it makes sense to consider a mark space which is of c\`adl\`ag-type. 
Note that the connection between these two representations of the GI-process can be compared to the representations of a one-dimensional Poisson process as either a point process (random measure) on $R_+$ or as a Levy-process (evolutionary process). 

Naturally the growth structure of the GI-process includes some parameters that need to be estimated when the model is fitted to data. 
For the case where the process is sampled at discrete times (hence creating a time series of marked point patterns), \cite{RS2} suggested a least-squares scheme to estimate the parameters related to the growth and interaction of the marks (the parameters in the ODEs). 
This estimation was further considered in \cite{Cronie}, where a spatio-temporal edge correction was added to the estimation procedure.
Regarding the parameters of the underlying (spatial) immigration-death process, which are the arrival and death rates of individuals, these have been estimated separately by means of different maximum likelihood (ML) estimation approaches (see e.g.\ \cite{Cronie,CronieYu,GibsonRenshaw,RS2}). 
In \cite{CronieYu} the full ML-estimation of the discretely sampled immigration-death process was treated, and, besides treating some practical aspects of the estimation, 
consistency and asymptotic normality of the ML-estimators were proved.

We note that it is unlikely that all marks in a marked point pattern (e.g.\ trees in a forest stand) have the same (deterministic) underlying growth pattern, as is the assumption in the original GI-process.
For instance, when we model a forest stand, in order to reflect phenomena such as (cumulative) measurement errors and individual growth features of each tree, there should be some noise or randomness present in the part of the model which handles the growth of the marks.
Trying to rectify this lack of individuality in the mark growth structures, our main aim here will be to take a first step in the process of adding randomness to the growth of the marks.
The approach chosen here is to add a (scaled continuous time) white noise to each component of the system of mark growth ODEs in the GI-process (see ''part 2)'' above).
This will generate a set of (possibly dependent) Brownian motion driven stochastic differential equations (SDEs) with jumps at the birth and death times  (see e.g.\ \cite{KaratzasShreve,Klebaner,Protter,Shreve}). 
In other words, we will be considering a functional marked point process with marks given by diffusion processes with jumps. 
By considering the temporal evolution of such a process, we may also say that we have defined a 
multivariate stochastic jump (diffusion) process which is time shifted and parametrized by an SID-process. 
We note that turning the marks into diffusions has some clear advantages.
To begin with, as a consequence of the randomness in the mark growth equations, we may be able to write down a likelihood structure, 
which is based on sampling the marks over time, 
so that we can treat the simultaneous ML-estimation of the whole GI-process 
instead of using the separate spatial and temporal estimators previously considered. 
In particular, we may exploit the Markovianity of the diffusions and the SID-process in the derivation of a full likelihood function.
Additionally, given a temporally discrete sampling scheme,
standard tools from likelihood theory can help us derive results about the asymptotic behaviour of the estimators (see e.g.\ \cite{Hoadley,Iacus,Rao,VanDerVaart}), when the process is sampled at discrete times. 
Ultimately it may also be possible to use tools from stochastic process theory and stochastic calculus to derive other theoretical results, such as asymptotic distributional properties of functionals of the process, which in the non-stochastic version of the GI-process only have been achievable through simulations.

Here, as an initial extension of the GI-process to the setting where the marks are stochastic, we will assume that all diffusions (mark radius processes) will be independent and of the same type (thereby generated from the same set of parameters).
Note that we hereby remove the interaction between the marks which was present in the GI-process. 
Hence, we obtain a functional marked point process with independent diffusion process marks. 
We have chosen to study only the Cox-Ingersoll-Ross (CIR) process (see e.g.\ \cite{CIR,Feller,Iacus}) to describe the marks (growth of the radii of the disks).
However, any other strictly positive diffusion which meets the requirements of the modelling setting in question could be chosen.
Note that a nice property of the CIR-process (besides being strictly positive under certain restrictions) is that it is one of the few diffusions which possesses known closed form expressions for its transition densities, and since the transition densities in turn are the building blocks of the likelihood function we can obtain a closed form expression for the likelihood function.

The paper is organised as follows.
In Section \ref{SectionSGI} the stochastic GI-process is defined and in Section \ref{SectionDistribution} we move on to further discuss some of its distributional properties and its building blocks.
Then, in Section \ref{SectionLikelihood}, with the finite dimensional distributions at hand, we define the ML-estimation regime which, in Section \ref{SectionAsymptotics}, in turn allows us to look at large sample properties of ML-estimators.
Since the consistency and the asymptotic normality of the ML-estimators are proved when the mark processes are stationary, in Section \ref{SectionEvaluation} we wish to see how robust these estimators are to the stationarity assumption.
In Section \ref{SectionPines} we evaluate the estimators on the same set of Scots pine data considered in \cite{Cronie} and 
in Section \ref{SectionDiscussion} further comments/possible extensions are given as well as a general discussion of the paper.
Finally, in the Appendix proofs of the main results can be found.

\section{The SG(I) process}\label{SectionSGI}
The \emph{stochastic growth-interaction} (SGI) process $\Phi_M$ is a functional marked point process which can be considered to be a stochastic extension of the (non-stochastic) growth-interaction (GI) process (see e.g.\ \cite{Cronie,RS1,RS2,Comas}). 
Heuristically $\Phi_M$ is described in the following way.
As time evolves, balls/disks (marked points) appear in a spatial study region $W$ at stochastic times and the radii of these balls/disks change size randomly over time until they disappear after stochastic times. 
Due to the usual biological context it will be natural to refer to each point as an \emph{individual}.
As previously noted, we here consider a simplified version of the SGI-process since we do not include any interaction between the marked points, and we therefore will refer to $\Phi_M$ as the \emph{stochastic growth} (SG) process.
It should be pointed out that due to the GI-process' forestry application it is usual that we illustrate the process in such a way that its marks describe the aforementioned growth of disks in $\R^2$ (the space occupied by the tree stocks), however, this need not be the case.
For instance, when modelling the spatio-temporal development of a forest stand, one could instead consider the case where the marks are used to describe, say, the development of the height of the trees.

As was mentioned in Section \ref{SectionIntroduction}, there are essentially two ways to construct the SG-process $\Phi_M$. 
In the first representation of the SG-process, which is given in Section \ref{SectionFMPP}, $\Phi_M$ is obtained by treating it as a functional marked point process with c\`adl\`ag function-valued marks. 
The second representation (Section \ref{SectionEvolutionary}) is obtained by considering the temporal evolution of the mark processes (which are parametrized by the other relevant information). 
Note that the latter representation is of significance since it will be exploited when we develop the statistical inference for the SG-process when the marks are sampled at discrete times.

Throughout we will assume that the spatial study region is given by a subset $W\subseteq\R^d$ of $d$-dimensional Euclidean space, $d\geq1$, with Borel sets $\B(W)\subseteq\B(R^d)$ and Lebesgue measure $\nu(W)<\infty$ (note that this also includes the case of identifying the edges of a rectangle in order to construct a torus). 
Moreover, we write $\N=\{0,1,\ldots\}$ and we denote the Euclidean norm and metric by $|x|$ and $d(x,y)$, respectively, for $x,y\in\R^d$, $d\geq1$. 
Furthermore, for a given set $A$, $\1_{A}(a)=\1\{a\in A\}$ will denote the related indicator function and $|A|$ will denote the related cardinality (it will be clear from context whether we consider the norm or the cardinality). 

Following \cite{Mateu} and the construction of a functional marked point process given therein, the mark space will be given by the set $\F=D_{[0,T]}(\R)$, $T\in\R_+=(0,\infty)$, of c\`adl\`ag (right continuous with existing left limits) functions $f:[0,T]\rightarrow\R$ (or $\F=D_{[0,\infty)}(\R)$ when $f:[0,\infty)\rightarrow\R$). 
The underlying probability space will be denoted by $(\mathcal{X},\mathcal{F},\P)$.

\subsection{Functional marked point process representation of the SG-process}\label{SectionFMPP}

Assume now that the SG-process under consideration is given by a c\`adl\`ag functional marked point process $\Phi_M = \{[X_i,M_i]:X_i\in\Phi'\}$, with locations $X_i\in W$ and functional marks $M_i\in\F$. 

More specifically, we let the unmarked process $\Phi'=\{X_1,\ldots,X_N\}$ be given by a homogeneous Poisson process on $W$, with intensity $\alpha\nu(W)T$, $\alpha\in\Theta_{\alpha}\subseteq\R_+$. We note that we hereby have $N\sim Poi(\alpha\nu(W)T)$ locations $X_1,\ldots,X_N\in W$ which are iid $Uni(W)$-distributed (their indices are assigned to them according to their ''birth times'' defined below). 

We now turn to the construction of the $\F$-valued random functional marks $M_1,\ldots,M_N$, which we will require to be almost surely (a.s.)\ positive. 
In order to generate the supports $\mathrm{supp}(M_i)=\{t\in[0,T]:M_i(t)\neq0\} = [B_i,D_i)$, $i=1,\ldots,N$, conditionally on $\Phi'$ (or simply $N$), let $B_1,\ldots,B_N$ be iid $Uni(0,T)$-distributed random variables (relabelled according to ascending size) and 
let additionally $L_1,\ldots,L_N$ be iid $Exp(\mu)$-distributed, $\mu\in\Theta_{\mu}\subseteq\R_+$. 
By now defining $D_i = (B_i+L_i)\wedge T = \min\{B_i+L_i,T\}$, $i=1,\ldots,N$,
we have that $M_i(t)=0$ for all $t\notin[B_i,D_i)$ and $M_i(t)>0$ for all $t\in[B_i,D_i)$ a.s.. 

We note here that the ''birth/arrival times'' $B_1<\ldots<B_N$ form a Poisson process on $[0,T]$ with intensity $\alpha\nu(W)$. In order to use a terminology which illustrates how $\Phi_M$ can be used to model the dynamics of a population (e.g.\ a forest stand), in connection to the birth times, we additionally call $L_1,\ldots,L_N$ the ''lifetimes'' and $D_1,\ldots,D_N$ the ''death times'' of the individuals.

As previously mentioned, each mark $M_i=\{M_i(t)\}_{t\in[0,T]}$, $i=1,\ldots,N$, can be illustrated by the space which it occupies in $\R^d$ at a given time $t$. This is done by means of the ball $B_{X_i}[M_i(t)]=\{\y\in\R^d:d(X_i,\y)\leq M_i(t)\}$, with centre $X_i$ and radius $M_i(t)$. 
Hereby, for a fixed time $t\in[0,T]$, $\Phi_M$ can be illustrated as a forest stand, or rather a Boolean model (see e.g.\ \cite{StoyanKendallMecke}), by considering the union of the disks or trees $\bigcup_{i=1}^{N} B_{X_i}[M_i(t)]$ or the union of all disks $B_{X_i}[M_i(t)]$, $i=1,\ldots,N$, such that $t\in[B_i,D_i)$ (whereby we only observe ''alive individuals'').

Considering now to the actual structure put on each $M_i=\{M_i(t)\}_{t\in[0,T]}$ when $t\in[B_i,D_i)$ (i.e.\ when the $i$th individual is alive), we will let $M_i(B_i) = M_{i}^{0}$ be the initial size of the $i$th mark process and to illustrate how the construction of $\Phi_M$ originates from the GI-process, we first recall (see e.g.\ \cite{RS2,Cronie,Comas}) that in the GI-process the marks $M_i(t)$, $i=1,\ldots,N$, were set to develop deterministically according to
\bea
\label{GI}
M_i(t)&=&M_i^0 + \int_{B_i}^{D_i}dM_i(s) \\
&=& M_i^0 + \int_{B_i}^{D_i} f(M_i(s);\theta) + \sum_{i\neq j}h(M_i(s),M_j(s),X_i,X_j;\theta)\;ds,\nn
\eea
for $t\in[B_i,D_i)$. Here $f(\cdot)$ controls the growth of radius $i$ in absence of spatial competition (so-called open growth in forestry terminology), $h(\cdot)$ controls the spatial interaction between individual $i$ and the other individuals and $\theta$ is a vector of parameters which controls $f(\cdot)$ and $h(\cdot)$ (see e.g.\ \cite{Cronie,RS2}). 

Here, however, in order to initialise the more realistic growth scenario where the marks have random growth patterns, as previously mentioned, we assume that each radius grows stochastically according to an a.s.\ positive 
stochastic process (note that an $\F$-valued random variable/element is a stochastic process with c\`adl\`ag sample paths). 
By calling $t\in[0,T]$ our \emph{global time} and $t-B_i$ our $i$th \emph{local time}, 
in order to properly express $M_i(t)$ through the global time scale, we will let the processes $Y_i(t)$, $i=1,\ldots,N$, where 
\bea
\label{Diffusion}
M_i(t) =
\left\{
\begin{array}{ll}
Y_i(t-B_i)		& \text{for } t\in[B_i,D_i)\\
0			& \text{for } t\notin[B_i,D_i),
\end{array}
\right.
\eea
be given by a system of independent (time-shifted) CIR-processes (see e.g.\ \cite{CIR,Feller,Iacus}) 
\bea
\label{SDE}
dY_i(t)	&=&	\lambda\left(1 - Y_{i}(t)/K\right)dt + \sigma\sqrt{Y_{i}(t)}dW_i(t),\\
\label{CIR}
Y_i(t)	&=&	M_{i}^{0} + \int_{0}^{t} \lambda\left(1 - \frac{Y_{i}(s)}{K}\right)ds + \int_{0}^{t}\sigma\sqrt{Y_{i}(s)}dW_i(s),
\eea
with $Y_i(0)=M_{i}^{0}$, $i=1,\ldots,N$, where the $W_i(t)$'s are independent standard Brownian motions. 
We note that this is equivalent to setting  $f(x)=\lambda(1-x/K)$, $h(\cdot)=0$ and adding a stochastic integral to expression (\ref{GI}).

The parameters $(\lambda,K,\sigma)\in\Theta_{\lambda}\times\Theta_{K}\times\Theta_{\sigma}\subseteq\R_+^{3}$ in expressions (\ref{SDE}) and (\ref{CIR}) control different aspects of the growth of a radius $M_i(t)$ of a ball/disk $B_{X_i}[M_i(t)]$:
The \emph{diffusion coefficient} $\sigma$ controls the magnitude of the random individual fluctuations of the radii.
The interpretation of the remaining two parameters becomes most clear by noticing that $Y_i(t)$ is a so called mean-reverting process, i.e.\ as $Y_i(t)$ starts to move away from its long term equilibrium $K$, the drift term starts pulling it back towards $K$ and the speed at which this occurs is given by $\lambda/K$.
Related to this interpretation we find that if we set $\sigma=0$ in expression (\ref{SDE}), we retrieve 
expression (\ref{GI}) with $h(\cdot)=0$ (the GI-process without interaction) 
or equivalently 
the ODE $dY_{i}(t) = \lambda\left(1 - Y_{i}(t)/K\right)dt$.
This ODE is often referred to as the linear growth function (see e.g.\ \cite{RenshawComasMateu,RS2}) and in this setting the parameter $\lambda$ is referred to as the (individual) \emph{growth rate} while the upper bound $K$ often is referred to as the \emph{carrying capacity}.
In conclusion, $\Phi_M$ is controlled by the parameter vector $\theta = (\lambda,K,\sigma,\alpha,\mu)\in\Theta=\Theta_{\lambda}\times\Theta_{K}\times\Theta_{\sigma}\times\Theta_{\alpha}\times\Theta_{\mu}\subseteq\R_{+}^{5}$,
where the pair $(\alpha,\mu)$ controls the time intervals during which the mark functions are non-zero and the remaining parameters control the growth of the marks.

Regarding the initial size $M_i(B_i) = Y_i(0) = M_{i}^{0}$, a few different options are available.
In \cite{Cronie}, the approach was to use the same constant initial value $M_{i}^{0}\equiv M_0\in\R_{+}$ for all individuals in the GI-process,
and in \cite{RS2} the $M_{i}^{0}$'s were chosen as independent $Uni(0,\epsilon)$-distributed random variables, $\epsilon>0$.
Here, however, we also have the further option to sample $M_{i}^{0}$ from the stationary distribution of $Y_i$ (see Section \ref{SectionDistribution} for details).

\subsection{Temporal evolution representation}\label{SectionEvolutionary}

In order to stress that we here consider the temporal evolution of the SG-process (or rather the temporal evolution of the marks), we often write $\Phi_M(t) = (M_1(t),\ldots,M_N(t))$, $t\geq0$.
Furthermore, in order to treat it properly we let it be adapted to some filtered probability space $(\mathcal{X},\mathcal{F},\{\mathcal{F}_t\}_{t\in[0,T)},\P)$. 
Specifically, the family of $\sigma$-algebras $\{\mathcal{F}_t\}_{t\in[0,T]}$ is such that, for any $s \leq t$, $\mathcal{F}_s\subseteq\mathcal{F}_t\subseteq\mathcal{F}$ and, for each $t\in[0,T]$, $\Phi_M(t)$ is $\mathcal{F}_t$-measurable.

We here will construct $\Phi_M(t)$ through two building blocks. 
The first building block is given by the underlying point process $\Phi(t)$, which can be constructed as spatio-temporal point process on $W\times[0,T]$, and we call it a spatial immigration-death (SID) process. 
This process governs the assignment of the spatial locations of the individuals in $W$, as well as their arrival times and their lifetimes. 
The second building block, which may be regarded as an extension of $\Phi(t)$, is the set of $\F$-valued functional marks (stochastic processes). 
We start by describing the underlying SID-process $\Phi(t)$.


Let us consider the SID-process $\{\Phi(t)\}_{t\in[0,T]}$ which is a spatial birth-death process (see e.g.\ \cite{Moller,Kasper}), taking values in the collection $N^f=\{\x\subseteq W:|x|<\infty\}$ of finite point configurations. It has birth rate function $b(\cdot,\cdot)=\alpha$, death rate function $d(\cdot,\cdot)=\mu$ and reference probability measure $\upsilon(B)=\nu(B)/\nu(W)$, $B\in\B(W)$, where $(\alpha,\mu)\in\Theta_{\alpha}\times\Theta_{\mu}\subseteq\R_+^2$. Hence, it 
can easily be verified that the underlying Markov jump process is given by a so-called immigration-death (ID) process ($M/M/\infty$-queue) $\{N(t)\}_{t\in[0,T]}$ (see e.g.\ \cite{CronieYu,GibsonRenshaw,Grimmett}) with arrival rate $\alpha\nu(W)$ and death rate $\mu$. 
Furthermore, we see that the spatial location kernel is such that all locations $X_i\in\Phi(t)\in N^f$, $t\in[0,T]$, are iid $Uni(W)$-distributed.
Looking closer at the ID-process, which is 
a time-homogeneous irreducible positive recurrent Markov chain with state space $\N$, we see that it can be used to describe a population where the ''birth/arrival times'' $B_1<\ldots<B_N$ of the individuals occur according to a Poisson process on $[0,T]$ with intensity $\alpha\nu(W)$ (whereby $N\sim Poi(\alpha\nu(W)T)$) and it generates ''lifetimes'' $L_1,\ldots,L_N$ for the individuals which are iid $Exp(\mu)$-distributed. 
Hereby, by defining $D_i = (B_i+L_i)\wedge T$, $i=1,\ldots,N$, we finalise the equivalence with the construction of the previously defined supports $\mathrm{supp}(M_i)=\{t\in[0,T]:M_i(t)\neq0\} = [B_i,D_i)$, $i=1,\ldots,N$, of the functional marks. 
It is sometimes important to keep track of which individuals are alive/visible and we therefore define the index process $\Omega(t)=\left\{i\in\{1,\ldots,N\}:t\in[B_i,D_i)\right\}$, $\Omega(0)=\emptyset$, which is a Markov process which controls which individuals are alive at time $t$ (note that $N(t) = |\Phi(t)| = |\Omega(t)|$).
We note that we just as well could have defined $\Phi(t)$ as a marked Poisson process on $[0,T]$, with jump times $B_1<\ldots<B_N$ and marks $(L_i,X_i)$, $i=1,\ldots,N$.


We now turn to the second building block of $\Phi_M$. 
Similarly to the previous scenario, the idea here is to consider the stochastic processes $M_i(t) = M_i(t;\Phi) =\1_{[B_i,D_i)}(t)Y_i(t-B_i)$, 
$i=1,\ldots,N$, where the $Y_i(t)$'s are defined in expressions (\ref{SDE}) and (\ref{CIR}). 
Just as before the parameter vector will be given by $\theta = (\lambda,K,\sigma,\alpha,\mu)\in\Theta=\Theta_{\lambda}\times\Theta_{K}\times\Theta_{\sigma}\times\Theta_{\alpha}\times\Theta_{\mu}\subseteq\R_{+}^{5}$. 

We note that under this representation, for each $t\in[0,T]$, we may treat $\Phi_M(t)$ as a (marginal) random vector of a multivariate $d$-dimensional, $d\leq N$, diffusion process with jumps, for which the component processes are independent, stopped and time-shifted CIR-processes with jumps ($d$ is controlled by the supports $[B_i,D_i)$, $i=1,\ldots,N$). 
Note that for the conditional process $\Phi_M(t)|\Phi$, the randomness is present only in the $Y_i(t)$'s. 
Moreover, we note that since $\nu(W)<\infty$ and $T<\infty$, we have that $N<\infty$ a.s..
It is this representation of $\Phi_M$ which mainly will be exploited in the statistical inference parts in the remainder of this paper.

\section{Distributional properties of the SG-process and its components}\label{SectionDistribution}

\subsection{Properties of the CIR-process}
Given below are some results concerning different properties of the CIR-process and they can all be found in e.g.\ \cite{CIR,Iacus}.
The explicit solution of the CIR-process, which is given by
\beann
\label{SolutionCIR}
Y_{i}(t) &=& K - \left(K - M_{i}^{0}\right)\e^{-t\lambda/K} + \sigma\int_{0}^{t}\e^{(s-t)\lambda/K}\sqrt{Y_{i}(s)}dW_i(s),
\eeann
is obtained by applying Ito's formula with $f(x,t) = x\e^{t\lambda/K}$ to the SDE (\ref{SDE}).
Furthermore, when $2\lambda\geq\sigma^2$ the process a.s.\ stays strictly positive whereas it may reach zero otherwise.
This condition, loosely speaking, says that the drift of the SDE must be large enough, in comparison to the diffusion term, to ensure that the mean-reversion is strong enough to keep the process a.s.\ positive.
Hence, we will require that $2\lambda\geq\sigma^2$ so that $M_i(t)>0$ for all $t\in[B_i,D_i)$.

Since $Y_i(t)$ is a Markov process, when we require that $2\lambda\geq\sigma^2$, it is possible to derive explicit statements about the transition distributions, i.e.\ the distributions of the random variables $Y_{i}(t)|Y_{i}(s)$, $s\leq t$.
For instance, when $s<t$ the conditional expectation and variance are given by
\bea
\label{MeanCIR}
\E\left[Y_{i}(t)|Y_{i}(s) = y_{s}\right] &=& K - (K - y_{s})\e^{-(t-s)\lambda/K},\\
\Var\left(Y_{i}(t)|Y_{i}(s) = y_{s}\right)
&=&  y_{s}\frac{\sigma^2 K}{\lambda}\left(\e^{-(t- s)\lambda/K} - \e^{-2(t-s)\lambda/K}\right)\nn\\
&&+ \frac{\sigma^2 K^2}{2\lambda} \left(1 - \e^{-(t-s)\lambda/K}\right)^2,\nn
\eea
respectively. 
More interesting for our purposes, however, is that under the hypothesis that $2\lambda\geq\sigma^2$ and $s\leq t$, conditional on $Y_{i}(s) = y_{s}$, the transition density of $Y_{i}(t)$ is given by the non-central $\chi^2$-distribution density
\bea
\label{TransitionCIR}
p_{Y_{i}}(t-s, y_{t}|y_{s};\lambda,K,\sigma) = a \e^{-(u+v)} \left(\frac{v}{u}\right)^{q/2} I_{q}\left(2\sqrt{uv}\right),
\eea
where $a = 2\lambda/\left(\sigma^2 K\left(1-\e^{-(t-s)\lambda/K}\right)\right)$,
$u = a y_{s}\e^{-(t-s)\lambda/K}$,
$v = a y_{t}$ and
$q = 2\lambda/\sigma^2 - 1$.
The function $I_{q}(x)=\sum_{k=0}^{\infty}(x/2)^{2k+q}/k!\Gamma(k+q+1)$, $x\in\R$, where $\Gamma(\cdot)$ denotes the gamma function, is the modified Bessel function of the first kind of order $q$.

This ergodic process also has a stationary (invariant) distribution $\pi=\pi_{\lambda,K,\sigma}$ which is given by the Gamma distribution with shape parameter $2\lambda/\sigma^2$ and scale parameter $\sigma^2 K/2\lambda$.
Hereby, the density of the stationary distribution is given by
\bea
\label{StatDensity}
\pi(x;\lambda,K,\sigma) = \frac{\left(2\lambda/\sigma^2K\right)^{2\lambda/\sigma^2}}{\Gamma(2\lambda/\sigma^2)} x^{2\lambda/\sigma^2-1}\e^{-x(2\lambda/\sigma^2 K)},\quad x\geq0,
\eea
so that $\pi$ has mean $K$ and variance $\sigma^2 K^2/2\lambda$ and, moreover, for $s<t$, the covariance function of $Y_i(t)$ is given by $\Cov(Y_i(s),Y_i(t)) = \frac{\sigma^2K^2}{2\lambda}\e^{-(t-s)}$.

As previously mentioned, $Y_i(t)$ is a Markov process and given that we start a Markov process in its stationary distribution, it is a strictly stationary process.
In the case of $Y_i(t)$ this means that $Y_i(0) = M_{i}^{0}\sim\pi$ and its finite dimensional distributions (fdds) are shift invariant w.r.t.\ time, i.e.\ $(Y_i(T_1),\ldots,Y_i(T_n))=^d(Y_i(T_1+h),\ldots,Y_i(T_n+h))$ for any set of times $T_1 < \ldots < T_n$, any $h\geq0$ and any $n\in\N$.
Hereby the marginal/transition distributions do not change, i.e.\ for any $(s,t)$, $t>s\geq0$, $Y_i(t)\sim\pi$ and $Y_{i}(t)|Y_{i}(s)\sim\pi$.

\subsection{Properties of the ID-process}

Recall from Section \ref{SectionEvolutionary} the underlying SID-process and its temporal component, the ID-process, $\left\{N(t)\right\}_{t\geq 0}$. 
The following result, which can be found in \cite{CronieYu}, gives us the transition probabilities and the stationary distribution of $N(t)$.

\begin{lem}\label{ImDeTransProb}
The transition probabilities of the ID-process, $N(t)$, are given as convolutions of Poisson densities and Binomial densities such that, for $h,t\geq0$ and $x,y\in\N$,
\beann
\label{ImmigrationDeathTransProb}
p_{N}(t,y|x;\alpha,\mu) &=& \left(P_{Poi(\rho)} \ast P_{Bin(x,\e^{-\mu t})}\right)\big(y\big) \\
												&=& \sum_{k=0}^{y} P_{Poi(\rho)}(k) P_{Bin(x,\e^{-\mu t})}(y-k),
\eeann
where $P_{Poi(\rho)}(\cdot)$ is the Poisson density with parameter $\rho=\alpha\left(1-\e^{-\mu t}\right)/\mu$ and $P_{Bin(x,\e^{-\mu t})}(\cdot)$ is the Binomial density with parameters $x$ and $\e^{-\mu t}$.

Furthermore, the stationary distribution of $N(t)$ is given by $$\pi_{N}(\cdot)=\P(Poi(\alpha/\mu)\in\cdot)$$ and the expected value and second moment of the $p_{N}(t,y|x;\alpha,\mu)$-distribution are given by $\E[N(h+t)|N(h)=i]=i\e^{-\mu t} + \rho$
and $\E[N^2(h+t)|N(h)=i] = i(i-1)\e^{-2\mu t} + (1+2\rho)i\e^{-\mu t} + \rho^2 + \rho$, respectively.
\end{lem}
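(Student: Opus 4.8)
The plan is to establish the transition law by a direct probabilistic decomposition of $N(h+t)$ into two independent contributions, rather than by integrating the Kolmogorov equations. By the time-homogeneity of the ID-process we may take $h=0$ and condition on $N(0)=x$. At time $t$ the population then consists of (i) those of the $x$ initial individuals that have not yet died, and (ii) those individuals that immigrated during $(0,t]$ and are still alive at time $t$. Because the lifetimes are $Exp(\mu)$-distributed and hence memoryless, each of the $x$ initial individuals survives to time $t$ independently with probability $\e^{-\mu t}$, so the number of survivors in (i) is $Bin(x,\e^{-\mu t})$-distributed.

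For contribution (ii) I would invoke the marking/thinning theorem for Poisson processes. The immigration epochs form a Poisson process of rate $\alpha$ on $(0,t]$; marking an arrival at time $s$ as ``alive at $t$'' with probability $\e^{-\mu(t-s)}$ (again by memorylessness of the $Exp(\mu)$ lifetime) yields an independently thinned Poisson process whose count is Poisson-distributed with mean
\[
\int_0^t \alpha\,\e^{-\mu(t-s)}\,ds = \frac{\alpha\left(1-\e^{-\mu t}\right)}{\mu} = \rho.
\]
Since the initial cohort and the immigration stream are independent, contributions (i) and (ii) are independent, so $N(t)$ is a sum of independent $Bin(x,\e^{-\mu t})$ and $Poi(\rho)$ variables, whose law is exactly the convolution $P_{Poi(\rho)}\ast P_{Bin(x,\e^{-\mu t})}$ asserted in the statement.

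The stationary distribution then follows in either of two equivalent ways. Letting $t\to\infty$ in the transition law, the binomial part degenerates to $0$ (since $\e^{-\mu t}\to 0$) while $\rho\to\alpha/\mu$, so $p_N(t,\cdot\,|x;\alpha,\mu)\to\P(Poi(\alpha/\mu)\in\cdot)$ irrespective of $x$; by positive recurrence this limit is the unique stationary law $\pi_N$. Alternatively, since $N(t)$ is a birth--death chain with up-rate $\alpha$ and down-rate $n\mu$ in state $n$, the detailed-balance relation $\pi_n\alpha=\pi_{n+1}(n+1)\mu$ is solved by $\pi_n\propto(\alpha/\mu)^n/n!$, i.e.\ the $Poi(\alpha/\mu)$ law.

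Finally, the two moment formulas drop out of the independent decomposition $N(h+t)=B+P$ with $B\sim Bin(i,\e^{-\mu t})$ and $P\sim Poi(\rho)$. The mean $\E[N(h+t)|N(h)=i]=i\e^{-\mu t}+\rho$ is immediate. For the second moment I would use $\E[N^2]=\E[B^2]+2\E[B]\E[P]+\E[P^2]$ together with the binomial factorial moment $\E[B(B-1)]=i(i-1)\e^{-2\mu t}$ and the Poisson identity $\E[P^2]=\rho^2+\rho$; collecting terms reproduces $i(i-1)\e^{-2\mu t}+(1+2\rho)i\e^{-\mu t}+\rho^2+\rho$. The only genuinely delicate point is the marking argument of the second paragraph --- namely justifying that thinning the rate-$\alpha$ immigration process with the time-dependent survival probability $\e^{-\mu(t-s)}$ yields a Poisson count that is independent of the survivors of the initial cohort. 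Everything after that reduction is bookkeeping.
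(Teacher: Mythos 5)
Your argument is correct, but note that the paper itself offers no proof of Lemma \ref{ImDeTransProb}: it is imported verbatim from \cite{CronieYu}, where the transition law of the $M/M/\infty$-type immigration--death process is obtained analytically, essentially by working with the probability generating function of $N(t)$ (equivalently, by solving the Kolmogorov forward equations), whose factorisation into a binomial and a Poisson generating function then exhibits the convolution structure. Your route is the probabilistic counterpart of that factorisation: memorylessness of the $Exp(\mu)$ lifetimes gives the $Bin(x,\e^{-\mu t})$ survivor count of the initial cohort, and the marking theorem applied to the rate-$\alpha$ arrival stream with location-dependent retention probability $\e^{-\mu(t-s)}$ gives an independent $Poi(\rho)$ count with $\rho=\int_0^t\alpha\e^{-\mu(t-s)}ds=\alpha(1-\e^{-\mu t})/\mu$. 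The step you flag as delicate is in fact the standard marking/thinning theorem for Poisson processes, and the independence of the two contributions is immediate because the residual lifetimes of the initial individuals are independent of the arrival process and of the new individuals' lifetimes; so nothing is missing. What your approach buys is transparency and economy: the convolution, the stationary $Poi(\alpha/\mu)$ limit (binomial part vanishes, $\rho\to\alpha/\mu$), and both moment formulas all drop out of the single decomposition $N(t)=B+P$ with $B\sim Bin(i,\e^{-\mu t})$ and $P\sim Poi(\rho)$ independent, whereas the generating-function route requires a separate differentiation step for the moments. Your second-moment bookkeeping, $\E[(B+P)^2]=i(i-1)\e^{-2\mu t}+i\e^{-\mu t}+2\rho i\e^{-\mu t}+\rho^2+\rho$, matches the stated expression exactly.
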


This lemma will be further exploited in Proposition \ref{fidi}, where the fdds of $\Phi_M(t)$ are derived.

\subsection{Finite dimensional distributions of the SG-process}

Consider now the SID-process $\Phi(t)$, or alternatively the index process $\Omega(t)$ and the population size process $N(t)=|\Phi(t)|=|\Omega(t)|$.
Recall that these processes as well as the CIR-process are Markov processes, which in turn implies that also $\Phi_M(t)$ is a Markov process.
This observation will be of great importance since in Proposition \ref{fidi} the Markov property we will be exploited in the derivation of the fdds of $\Phi_M(t)$.

In order to set the framework, we consider the (sample) times $0 = T_0 < T_1 < \ldots < T_n \leq T$ and the distribution of $\left(\Phi_{M}(T_1),\ldots,\Phi_{M}(T_n)\right)^T$, when we are concerned with exactly, say, $d\in\{1,\ldots,N\}$ individuals who appear at $T_1,\ldots,T_n$ (recall that $N$ is the total number of individuals observed if we monitor the process continuously).
Furthermore, provided that the joint density of $\left(\Phi_{M}(T_1),\ldots,\Phi_{M}(T_n)\right)^T$ exists,
when evaluated at the size-time matrix
\beann
\mathbf{M} =
\begin{pmatrix}
m_{11}	&	\cdots	&	m_{1n}\\
\vdots	&	\ddots	&	\vdots\\
m_{d1}	&	\cdots	&	m_{dn}
\end{pmatrix}
\in\R^{d\times n},
\eeann
we will denote this density by $\mathbf{p}_{T_1,\ldots,T_n}(\mathbf{M};\theta)$.
It should be emphasised that the $i$th row of $\mathbf{M}$ represents the evaluation-sizes of the $i$th individual under consideration, at the respective times $T_1,\ldots,T_n$.
We further also note that if $m_{ik} = 0$, we are considering the case where the $i$th individual is not alive at time $T_k$.
Hence, if $m_{ik} = 0$ for all $k=1,\ldots,l-1<n$, and $m_{il} > 0$, we evaluate a scenario where $B_i\in(T_{l-1},T_{l}]$, and when $m_{il} > 0$ and $m_{ik} = 0$ for all $k=l+1,\ldots,n$ we consider $D_i\in(T_{l},T_{l+1}]$. 
Consequently, if a row were to contain only zeros, we would be considering an individual who is not alive at any of $T_1,\ldots,T_n$, whence that individual/row may be removed from consideration.

The exact form of $\mathbf{p}_{T_1,\ldots,T_n}(\mathbf{M};\theta)$ is given in Proposition \ref{fidi} and the main feature exploited in its derivation is the Markovianity of $\Phi_M(t)$.
We note that the distribution of $\left(\Phi_{M}(T_1),\ldots,\Phi_{M}(T_n)\right)^T$ may be expressed through $\Phi_M(t)$'s transition probabilities/densities, which are given by
\bea
\label{TransSDE}
\P\left(\Phi_{M}(t)\in\mathbf{A}|\mathcal{F}_s;\theta\right) &=& \P\left(\Phi_{M}(t)\in\mathbf{A}|\Phi_{M}(s); \theta\right),
\eea
where $\mathbf{A} = A_1 \times\ldots\times A_d\in\mathcal{B}(\R^{d})$ and $0 \leq s < t \leq T$.
The proof of Proposition \ref{fidi} can be found in the Appendix.

\begin{prop}[Fdds of $\Phi_{M}(t)$]\label{fidi}
Given $0 = T_0 < T_1 < \ldots < T_n \leq T$ and $\Phi_{M}(T_0)$, if we let $M_i^0=M_0>0$ for all $i$, then the joint density of $\left(\Phi_{M}(T_1),\ldots,\Phi_{M}(T_n)\right)^T$, evaluated at $\mathbf{M}\in\R^{d\times n}$, $d\geq1$, is given by
\bea
\label{JointDensity}
\mathbf{p}_{T_1,\ldots,T_n}(\mathbf{M};\theta)
&=& C
\prod_{k=1}^{n} p_{N}\left(\Delta T_{k}, |\omega_k| \Big| |\omega_{k-1}|;\alpha\nu(W),\mu\right)\\
&&\times \prod_{k=1}^{n} \prod_{i\in\omega_{k-1}\cap\omega_{k}} p_{Y_{1}}(\Delta T_{k}, m_{ik}|m_{i(k-1)};\lambda,K,\sigma)\nn\\
&&\times
\prod_{i=1}^{d}
\int_{T_{k_{i}-1}}^{T_{k_{i}}} \frac{p_{Y_{1}}(T_{k_{i}}-t, m_{i(k_{i}-1)}|M_0;\lambda,K,\sigma)}{T_{k_{i}}-T_{k_{i}-1}} dt,\nn
\eea
where $\Delta T_{k} = T_{k}-T_{k-1}$, $\omega_k = \{i:m_{ik} > 0\}$, $k=1,\ldots,n$, and $k_{i}=\min\{k:i\in\omega_k\}$, $i=1,\ldots,d$.
The constant $C = C(\nu(W),\mathbf{M})>0$ can be found in expression (\ref{NormalizingConstant}), and the densities $p_{Y_{1}}(\cdot)$ and $p_{N}(\cdot)$ are given, respectively, by expression (\ref{TransitionCIR}) and Lemma \ref{ImDeTransProb}.
\end{prop}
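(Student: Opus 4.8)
The plan is to exploit the Markov property of $\Phi_M(t)$, recorded in (\ref{TransSDE}), to collapse the joint density into a telescoping product of one-step transitions, and then to factor each one-step transition according to the underlying immigration--death bookkeeping. First I would write
\[
\mathbf{p}_{T_1,\ldots,T_n}(\mathbf{M};\theta) = \prod_{k=1}^{n} p_{\Phi_M}\bigl(\Delta T_k,\, \Phi_M(T_k) \bigm| \Phi_M(T_{k-1});\theta\bigr),
\]
which is legitimate because $\Phi_M(t)$, being assembled from the Markov processes $\Omega(t)$ and $N(t)$ together with the conditionally independent CIR diffusions, is itself Markov. The substance of the proof then lies in identifying each one-step transition density.

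For a single step $T_{k-1}\to T_k$ I would condition on the ID-process and split the label set into survivors $\omega_{k-1}\cap\omega_k$, deaths $\omega_{k-1}\setminus\omega_k$, and births $\omega_k\setminus\omega_{k-1}$. Conditionally on $\Phi$ (equivalently, on all birth and death times) the marks are independent CIR processes, so the surviving individuals contribute $\prod_{i\in\omega_{k-1}\cap\omega_k} p_{Y_1}(\Delta T_k, m_{ik}\mid m_{i(k-1)};\lambda,K,\sigma)$, where the transition density (\ref{TransitionCIR}) is used and the common law of the $Y_i$ lets me write $p_{Y_1}$ throughout. The number of survivors is governed by the $Bin(|\omega_{k-1}|,\e^{-\mu\Delta T_k})$ factor and the number of new arrivals by the $Poi(\rho)$ factor of Lemma \ref{ImDeTransProb}, which together assemble into $p_N(\Delta T_k, |\omega_k|\mid|\omega_{k-1}|;\alpha\nu(W),\mu)$; a dying individual contributes no mark factor, since its observed value is $0$ and its death probability is already carried by this count transition.

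The births invoke the order-statistics property of the Poisson process: given that an individual first appears in $(T_{k-1},T_k]$, its birth time is $Uni(T_{k-1},T_k)$-distributed, it starts from the common value $M_0$, and it then evolves for the unobserved duration $T_k-B_i$ to its first recorded positive mark. Integrating the CIR density over this uniform birth time produces exactly the integral factor displayed in (\ref{JointDensity}), attached once to each individual at its first appearance time $T_{k_i}$. The remaining terms---the labellings that turn the count-level transition $p_N$ into an individual-level product, and the uniform spatial densities $1/\nu(W)$ of the locations---carry no dependence on $(\lambda,K,\sigma,\alpha,\mu)$ and are gathered into the constant $C=C(\nu(W),\mathbf{M})$.

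The main obstacle is precisely this bookkeeping linking the count-level $p_N$ to the individual-level factorization: one has to verify that, once the binomial coefficient and the Poisson normalization in $p_N$ are reconciled with the assignment of labels to surviving, dying and newly born individuals, the product of the per-individual survival, death and birth probabilities---together with the $p_{Y_1}$ factors and the integral terms---reproduces the claimed form, with a residue that is genuinely free of $\theta$ and hence absorbable into $C$. Some care is also needed to confirm that an individual born and later removed within the observation window enters only through its survivor CIR factors and the death probability in $p_N$, producing no spurious density at the (unobserved) death time.
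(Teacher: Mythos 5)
Your proposal is essentially correct and rests on the same two pillars as the paper's proof: Markovianity together with the conditional independence of the CIR marks given the SID-process, and the uniform (order-statistics) distribution of a birth time within the sampling interval where the individual first appears. The organization differs, though. You telescope in time, factoring the joint density into one-step transitions of $\Phi_M$ and then splitting each step into survivors, deaths and births; the paper instead conditions once on the entire SID trajectory, writes $\mathbf{p}_{T_1,\ldots,T_n} = \mathbf{p}_{M(T_1,\ldots,T_n)|\Phi}\cdot p_{\Phi}$, and only afterwards applies the Markov property separately to $N(t)$ and to the marks. The paper's order of conditioning pays off exactly at the point you flag as the main obstacle: after conditioning $p_{\Phi|X}$ on the counts $(N(T_k))_{k=1}^{n}$, the entire label-assignment residue collapses to $p_{\Phi|X,N}=1/|\mathcal{P}|$, where $\mathcal{P}$ is the set of index-set sequences with the prescribed cardinalities and with the ``no resurrection'' constraint; this is manifestly free of $\theta$ and is absorbed, together with $\nu(W)^{-d}$, into the constant $C$ of (\ref{NormalizingConstant}). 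So the verification you defer is carried out globally in one stroke rather than reconciled step by step with the binomial and Poisson factors inside $p_N$. One caution about your telescoping: the vector of current sizes is not by itself a Markov state, since a zero coordinate at $T_{k-1}$ may mean either ``not yet born'' or ``already dead'', and these have different transition laws; this is harmless when evaluating at a fixed $\mathbf{M}$ (the whole matrix disambiguates each zero), but your one-step densities must be read as conditioned on the death history as well, which the paper's global conditioning on $\fomega$ sidesteps. Finally, your observation that a dying individual contributes no mark factor is confirmed in the paper by integrating the CIR transition density over the unobserved death time and death size, which yields $1$.
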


Conditioning on $\Phi_{M}(T_0)$ is reasonable since we in most applications already have all the information about the marked points present at the first sample time point.
Note that if we choose all $M_i^0$ fixed but not necessarily equal, expression (\ref{JointDensity}) only changes in that $M_i^0$ replaces $M_0$.
Furthermore, from the proof of Proposition \ref{fidi} we see that the transition probabilities (\ref{TransSDE}) are obtained by finding 
\beann
\P\left(\Phi_{M}(t)\in\mathbf{A}|\Phi_{M}(s)=\y; \theta\right)
= \int_{\mathbf{A}} \mathbf{p}_{\Phi_{M}(t)|\Phi_{M}(s)}(\m|\y;\theta)d\m, 
\eeann
where $\mathbf{A} = A_1 \times\ldots\times A_d\in\mathcal{B}(\R^{d})$, $\m=(m_1,\ldots,m_d)^T\in\R^{d}$, $\y=(y_1,\ldots,y_d)^T\in\R^{d}$, 
\beann
\mathbf{p}_{\Phi_{M}(t)|\Phi_{M}(s)}(\m|\y;\theta) &=& 
C(\nu(W),\m,\y)\;
p_{N}\left(t-s, |\omega(\m)| \Big| |\omega(\y)|; \alpha\nu(W),\mu\right)\\
&&\times
\prod_{i\in\omega(\y)\cap\omega(\m)} p_{Y_{1}}(t-s, m_{i}|y_{i};\lambda,K,\sigma)\\
&&\times
\prod_{i\in\omega(\y)^c\cap\omega(\m)} \frac{1}{t-s} \int_{s}^{t} p_{Y_{1}}(t-v, m_{i}|M_0;\lambda,K,\sigma) dv,
\eeann
$\omega(\m) = \{i:m_i>0\}$, $\omega(\y) = \{i:y_i>0\}$ and $C(\nu(W),\m,\y)$ is a constant.

As mentioned before, when $M_{i}^{0}\sim\pi$ we have that $Y_i(t)$ is a strictly stationary process and this will have a further impact on the joint densities in Proposition \ref{fidi}.
\begin{cor}
Given the preliminaries and notation of Proposition \ref{fidi}, by instead assuming that $M_{i}^{0}\sim\pi$, the joint density (\ref{JointDensity}) becomes
\bea
\label{StatJointDensity}
\mathbf{p}_{T_1,\ldots,T_n}(\mathbf{M};\theta)
&=& 
C
\prod_{k=1}^{n} p_{N}\left(\Delta T_{k},|\omega_k| \Big| |\omega_{k-1}|;\alpha\nu(W),\mu\right)\nn\\
&&\times \prod_{k=1}^{n} \prod_{i\in\omega_{k}} \pi(m_{ik};\lambda,K,\sigma).
\eea
\end{cor}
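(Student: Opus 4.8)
The plan is to start from the joint density (\ref{JointDensity}) and track how replacing the deterministic initial value $M_0$ by a random $M_i^0\sim\pi$ transforms each of its factors, keeping in mind that the point-process contribution is untouched. First I would observe that neither the normalising constant $C=C(\nu(W),\mathbf{M})$ nor the factor $\prod_{k=1}^{n} p_N(\Delta T_k,|\omega_k|\mid|\omega_{k-1}|;\alpha\nu(W),\mu)$ depends on how the marks are initialised: both are determined solely by the ID-process dynamics and the birth--death/support structure encoded in the sets $\omega_k$. Hence these factors are identical in (\ref{JointDensity}) and in the claimed expression (\ref{StatJointDensity}), and the whole problem reduces to showing that the mark-dependent factors collapse to $\prod_{k=1}^{n}\prod_{i\in\omega_k}\pi(m_{ik};\lambda,K,\sigma)$.

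Next I would treat the birth integrals, i.e.\ the factor $\prod_{i=1}^{d}\int_{T_{k_i-1}}^{T_{k_i}} p_{Y_1}(T_{k_i}-t,\,m_{ik_i}\mid M_0;\lambda,K,\sigma)/(T_{k_i}-T_{k_i-1})\,dt$ associated with the transition from the initial value at the birth time to the first observed positive size $m_{ik_i}$ of individual $i$ at time $T_{k_i}$. With $M_i^0\sim\pi$ the fixed value $M_0$ must be marginalised against $\pi$, and I would use the defining invariance property of the stationary distribution,
\[
\int_0^\infty p_{Y_1}(s,x\mid y;\lambda,K,\sigma)\,\pi(y;\lambda,K,\sigma)\,dy = \pi(x;\lambda,K,\sigma),\qquad s>0,
\]
which is a consequence of the strict stationarity of the CIR-process recorded in Section~\ref{SectionDistribution}. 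Since the resulting inner integral equals $\pi(m_{ik_i})$ independently of the birth time $t$, the remaining uniform average over $(T_{k_i-1},T_{k_i}]$ contributes a factor $1$, so each birth integral collapses to $\pi(m_{ik_i};\lambda,K,\sigma)$, supplying exactly the stationary factor for the first alive time of individual $i$.

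I would then handle the transition densities $\prod_{k=1}^{n}\prod_{i\in\omega_{k-1}\cap\omega_k} p_{Y_1}(\Delta T_k,m_{ik}\mid m_{i(k-1)};\lambda,K,\sigma)$ by invoking the stationarity statement from Section~\ref{SectionDistribution} that, once $M_i^0\sim\pi$, one has $Y_i(t)\mid Y_i(s)\sim\pi$ for all $t>s\ge0$; consequently each transition factor reduces to its stationary marginal $\pi(m_{ik};\lambda,K,\sigma)$. Combining the two steps individual by individual, and using that each individual is alive on a contiguous block of sample times $k_i,k_i+1,\ldots,k_i+\ell$, the birth integral supplies $\pi(m_{ik_i})$ for the first alive time while each of the $\ell$ subsequent transitions supplies $\pi(m_{ik})$, so the mark factors for individual $i$ amount to $\prod_{k:\,i\in\omega_k}\pi(m_{ik})$. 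Reindexing the product over $i$ as a double product over times $k$ and alive indices $i\in\omega_k$ yields $\prod_{k=1}^{n}\prod_{i\in\omega_k}\pi(m_{ik};\lambda,K,\sigma)$, which together with the unchanged $C$ and $p_N$ factors gives (\ref{StatJointDensity}).

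The main obstacle is less the algebra than ensuring that the two mark-reduction steps are applied on exactly the right index sets, so that every alive cell $(i,k)$ with $i\in\omega_k$ receives precisely one $\pi(m_{ik})$ factor: the birth integral must account for the first alive time $k_i$ and the transition densities for the later ones, with no overlap or double counting at the birth time. The decisive analytic input is the invariance identity together with the property $Y_i(t)\mid Y_i(s)\sim\pi$ established in Section~\ref{SectionDistribution}; it is this stationarity that allows the Markov transition densities to be replaced by marginal stationary densities and hence the mark part of the joint density to factor completely over the alive space--time cells.
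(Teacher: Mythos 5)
Your proposal is correct and follows essentially the same route as the paper: the paper's proof is the single observation that all transition densities $p_{Y_i}(\Delta T_k,\cdot|\cdot;\lambda,K,\sigma)$ appearing in expression (\ref{MarkJointDensity}) may be replaced by the stationary Gamma density $\pi(\cdot;\lambda,K,\sigma)$, justified by the Section \ref{SectionDistribution} statement that under $M_i^0\sim\pi$ one has $Y_i(t)\sim\pi$ and $Y_i(t)|Y_i(s)\sim\pi$, which is exactly the substitution you perform for the inter-sample factors. The only added value in your write-up is that you make explicit, via the invariance identity $\int_0^\infty p_{Y_1}(s,x|y)\pi(y)\,dy=\pi(x)$, how the birth integrals over the unobserved arrival times collapse to $\pi(m_{ik_i})$ --- a detail the paper's one-sentence proof leaves implicit but which is needed so that every alive cell $(i,k)$ receives exactly one factor $\pi(m_{ik})$.
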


\begin{proof}
We note that all transition densities $p_{Y_{i}}(\Delta T_{k}, \cdot|\cdot;\lambda,K,\sigma)$ in expression (\ref{MarkJointDensity}) may be replaced by the stationary Gamma densities $\pi(\cdot;\lambda,K,\sigma)$ of expression (\ref{StatDensity}).
\end{proof}

\begin{rem}
We may additionally require that also $N(t)$ starts in its stationary distribution $\pi_{N}$ (see Lemma \ref{ImDeTransProb}) so that also $N(t)$ becomes a strictly stationary process.
Hereby the transition probabilities $p_{N}(\cdot)$ in expression (\ref{StatJointDensity}) will be replaced by $\pi_{N}\left(|\omega_k|;\alpha\nu(W),\mu\right)$.
Note that this change will imply that $N(t)=|\Omega(t)|\sim Poi(\alpha/\mu)$ for all $t\geq0$ and under this setup, since all $Y_i$'s are stationary, we have that $M_{i}(0)\sim\pi$ for all individuals $i\in\Omega(0)$.
\end{rem}

\begin{rem}
As we previously noted, conditionally on $\Omega(0)=\emptyset$, the process $\Xi(t) = \bigcup_{i\in\Omega(t)}B_{X_i}[M_i(t)]$ at each fixed time $t$ corresponds to a Boolean model (see e.g.\ \cite{StoyanKendallMecke}). The \emph{germs} $\{X_i\}_{i\in\Omega(t)}$ are generated from a Poisson process with intensity measure $\Lambda_t(B) = \frac{\alpha}{\mu}(1-\e^{-\mu t})\nu(B\cap W)$, $B\in\B(\R^2)$, and the \emph{grains} are given by $\{B_{X_i}[M_i(t)]\}_{i\in\Omega(t)}$, where all $M_i(t)$'s are iid $\Gamma(2\lambda/\sigma^2,\sigma^2 K/2\lambda)$-distributed.
Note that this follows since $\Omega(t)$ can be generated as a thinned Poisson process (see \cite{CronieYu}).
\end{rem}

\section{Maximum likelihood estimation}\label{SectionLikelihood}

Conditionally on $\Phi_{M}(T_0)=\Phi_{M}(0)$, we now assume that we sample the SG-process $\Phi_{M}(t)$ as $M=(\phi_1,\ldots,\phi_n)$ at the sample times $T_1,\ldots,T_n$ on some region $W$.
Here $\phi_k = \left(m_{1k},\ldots,m_{dk}\right)^T$,
$k=1,\ldots,n$, where $d=|\bigcup_{k=1}^{n}\omega_k|$ and $\omega_k = \{\text{indices of individuals present at time } T_k\} = \{i:m_{ik}>0\}$ 
(we may write $\phi_k = \left(\1_{\omega_k}(1)m_{1k},\ldots,\1_{\omega_k}(N)m_{dk}\right)^T$ to emphasise the individuals' life status). 
Now, based on this sampling scheme we want to find the Maximum Likelihood (ML) estimate of the parameter vector $\theta = (\lambda,K,\sigma,\alpha,\mu)\in\Theta$.

We note that when $\Phi_{M}$ is treated as in Section \ref{SectionFMPP}, i.e.\ as a functional marked point process instead of as an evolutionary process, the estimation based on the current sampling is equivalent to estimating a thinned version of the process. 
More specifically, this thinned version is such that all marked points $A=\{[X_i,M_i]:[B_i,D_i)\cap\{T_1,\ldots,T_n\}=\emptyset\}$ are removed and only the partial information $\{(X_i,M_i(T_1),\ldots,M_i(T_n)):i\notin A\}$ is available to estimate the actual structure of the non-thinned process $\Phi_{M}$ (think of this as a sample from the previously mentioned Boolean model which was based on solely the ''alive individuals'').

The likelihood function of the parameters of the SG-process, $\L_n(\theta)=\L_n(\theta;M)$, is given by the joint density of $\left(\Phi_{M}(T_1),\ldots,\Phi_{M}(T_n)\right)$, evaluated at $M=\left(\phi_1,\ldots,\phi_n\right)$ and treated as a function of $\theta\in\Theta$.
Therefore, depending on whether we choose $M_{i}(0)$ to be fixed or drawn from the stationary distribution, we end up evaluating either expression (\ref{JointDensity}) or expression (\ref{StatJointDensity}) when we evaluate $\L_n(\theta)$.

\subsection{ML-estimation: $M_{i}^{0}=M_0\in\R_+$}\label{SectionNonStationaryMLE}
When we let all $Y_{i}(0)=M_i^0=M_0\in\R_+$ be given by the same fixed value, from expression (\ref{JointDensity}) we obtain
\bea
\label{FullLikelihood}
\L_n(\theta) =
C
\L_{1,n}(\theta) \L_{2,n}(\theta) \L_{3,n}(\theta)
\propto \L_{1,n}(\theta) \L_{2,n}(\theta) \L_{3,n}(\theta),
\eea
where, for $k_{i}=\min\{k:i\in\omega_k\}$ and $\Delta T_{k} = T_{k} - T_{k-1}$, $k=1,\ldots,n$,
\beann
\L_{1,n}(\theta)	&=&	\prod_{k=1}^{n} \prod_{i\in\omega_{k-1}\cap\omega_{k}} p_{Y_{1}}(\Delta T_{k}, m_{ik}|m_{i(k-1)};\lambda,K,\sigma)\\
\L_{2,n}(\theta)	&=&	\prod_{i\in\bigcup_{k=1}^{n}\omega_k} \frac{1}{\Delta T_{k_{i}}} \int_{0}^{\Delta T_{k_{i}}} p_{Y_{1}}(t, m_{i(k_{i}-1)}|M_0;\lambda,K,\sigma) dt\\
\L_{3,n}(\theta)	&=&	\prod_{k=1}^{n} p_{N}\left(\Delta T_{k},|\omega_k| \Big| |\omega_{k-1}|;\alpha\nu(W),\mu\right).
\eeann
The (rescaled) log-likelihood is given by
\beann
\label{FullLogLikelihood}
l_n(\theta) &=&
\log\left(C^{-1}\L_n(\theta)\right)
= \log\L_{1,n}(\theta) + \log\L_{2,n}(\theta) + \log\L_{3,n}(\theta)\nn\\
&=:& l_{1,n}(\theta) + l_{2,n}(\theta) + l_{3,n}(\theta),
\eeann
whereby the ML-estimator of $\theta\in\Theta$, based on $\left(\Phi_{M}(T_1),\ldots,\Phi_{M}(T_n)\right)$, will be given by
\beann
\widetilde{\theta}_n &:=& \widetilde{\theta}_n\left(\Phi_{M}(T_1),\ldots,\Phi_{M}(T_n)\right)\\
&=& \argmax_{\theta\in\Theta} l_n(\theta;\Phi_{M}(T_1),\ldots,\Phi_{M}(T_n))\\
&=& \argmax_{\theta\in\Theta} (l_{1,n}(\theta) + l_{2,n}(\theta) + l_{3,n}(\theta)).
\eeann
We now want to express the ML-estimator $\widetilde{\theta}_n = (\widetilde{\lambda}_n, \widetilde{K}_n, \widetilde{\sigma}_n, \widetilde{\alpha}_n, \widetilde{\mu}_n)$ as the sum of two estimators $\widetilde{\theta}_{1,n}$ and $\widetilde{\theta}_{2,n}$ which, respectively, handle the separate estimation of $(\lambda,K,\sigma)$ and $(\alpha,\mu)$.
We note that $l_{1,n}(\theta) + l_{2,n}(\theta)$, which only involves $\lambda$, $K$ and $\sigma$, will be maximized by any $\widetilde{\theta}_{1,n} = (\widetilde{\lambda}_n,\widetilde{K}_n,\widetilde{\sigma}_n,\alpha,\mu)$, $(\alpha,\mu)\in\R^2$.
Similarly we have that $l_{3,n}(\theta)$, which only involves $\alpha$ and $\mu$, will be maximized by $\widetilde{\theta}_{2,n} = (\lambda,K,\sigma,\widetilde{\alpha}_n,\widetilde{\mu}_n)$, for any $(\lambda,K,\sigma)\in\R^3$.
Hence, in order for $\widetilde{\theta}_{n} = \widetilde{\theta}_{1,n} + \widetilde{\theta}_{2,n}$ to hold, we must require that $\widetilde{\theta}_{1,n} = (\widetilde{\lambda}_n,\widetilde{K}_n,\widetilde{\sigma}_n,0,0)$ and $\widetilde{\theta}_{2,n} = (0,0,0,\widetilde{\alpha}_n,\widetilde{\mu}_n)$, i.e.\
\bea
\label{MLE}
\widetilde{\theta}_n
&=& \widetilde{\theta}_{1,n} + \widetilde{\theta}_{2,n}\\
&=& \argmax_{\theta\in\Theta_{\lambda}\times\Theta_{K}\times\Theta_{\sigma}\times\{0\}^2} \left\{l_{1,n}(\theta) + l_{2,n}(\theta)\right\}
+ \argmax_{\theta\in\{0\}^3\times\Theta_{\alpha}\times\Theta_{\mu}} l_{3,n}(\theta), \nn
\eea
and consequently we may estimate the parameters of the ID-process and the parameters related to the mark growth separately.

When the amount of data is large or when the $\Delta T_k$'s are small, we may consider instead the approximate ML-estimation where we set $l_{2,n}(\theta)=0$ so that the only information about the diffusions comes from the observed transitions.
This is reasonable since the amount of information about the actual parameter values which is carried by $l_{2,n}(\theta)$ is not really substantial (in comparison to $l_{1,n}(\theta)$).
Moreover, since there is no closed form expression available for the ML-estimator $(\widetilde{\alpha}_n,\widetilde{\mu}_n)$ of the ID-process (see \cite{CronieYu}), there is also no closed form available for $\widetilde{\theta}_n$ in (\ref{MLE}).
Hence, in modelling situations one has to rely on numerical methods to find $\widetilde{\theta}_n$.

\subsection{ML-estimation: $M_{i}^{0}\sim\pi$}\label{SectionStationaryMLE}

Under the assumption that we start the diffusions in their stationary distributions, $M_{i}^{0}\sim\pi$, from expression (\ref{StatJointDensity}) we obtain the likelihood function
\bea
\label{StatFullLikelihood}
\L_n(\theta) = C \L_{1,n}(\theta) \L_{2,n}(\theta)
\propto
\L_{1,n}(\theta) \L_{2,n}(\theta)
\eea
and the (rescaled) log-likelihood
\beann
\label{FullLogLikelihood}
l_n(\theta) &=& \log\left(C^{-1}\L_n(\theta)\right)
= \log\L_{1,n}(\theta) + \log\L_{2,n}(\theta)\nn\\
&=:&
l_{1,n}(\theta) + l_{2,n}(\theta),\qquad
\eeann
where, for $\Delta T_{k} = T_{k} - T_{k-1}$, $k=1,\ldots,n$,
\beann
l_{1,n}(\theta)	&=&	\log\left(\prod_{k=1}^{n} \prod_{i\in\omega_{k}} \pi(m_{ik};\lambda,K,\sigma)\right)
=\sum_{k=1}^{n} \sum_{i\in\omega_{k}} \log\pi(m_{ik};\lambda,K,\sigma)\\
l_{2,n}(\theta)	&=& \log\left(\prod_{k=1}^{n} p_{N}\left(\Delta T_{k},|\omega_k| \Big| |\omega_{k-1}|;\alpha\nu(W),\mu\right)\right)\\
&=& \sum_{k=1}^{n} \log p_{N}\left(\Delta T_{k},|\omega_k| \Big| |\omega_{k-1}|;\alpha\nu(W),\mu\right).
\eeann
Here, just as in the fixed initial value case of Section \ref{SectionNonStationaryMLE}, we deal with the separate estimators
\bea
\label{StatMLE}
\hat{\theta}_n &=& \hat{\theta}_{1,n} + \hat{\theta}_{2,n}\\
&=& \argmax_{\theta\in\Theta_{\lambda}\times\Theta_{K}\times\Theta_{\sigma}\times\{0\}^2} l_{1,n}(\theta)
+ \argmax_{\theta\in\{0\}^3\times\Theta_{\alpha}\times\Theta_{\mu}} l_{2,n}(\theta)\nn
\eea
and, similarly, there is no closed form expression available for $\hat{\theta}_n$.

\section{Asymptotic inference under stationarity}\label{SectionAsymptotics}
When dealing with asymptotic spatial statistics, there are different types of asymptotics which may be considered.

In the case of the SG-process, within the framework of so called increasing domain asymptotics (see e.g.\ \cite{Zhang}), there are essentially two different ways to increase the total number of individuals observed, and consequently also the number of transitions taking place between pairs of consecutive sample times $T_{k-1}$ and $T_{k}$.
The first approach is to increase the number of sample times of the mark processes, i.e.\ we let $n$ grow, whereby $T_n=T$ also will grow.
The second approach is to gradually increase the size of the sampling window $W$ (with the number of sample times fixed).
The two approaches are similar since in both cases we increase the parameter of the Poisson distribution of $N\sim Poi(\alpha T\nu(W))$.
Here, we choose to consider only the first of the two alternatives.

Consider the situation where we, without loss of generality, let $W=[0,1]^2$ and apply the equidistant sampling scheme $T_k = k\Delta$, $k=1,\ldots,n$, $\Delta>0$, where $T = T_n = n\Delta$.
In what follows we denote by $\theta_0=(\lambda_0,K_0,\sigma_0,\alpha_0,\mu_0)\in\Theta$ the true/underlying parameter vector which is responsible for generating $\Phi_M$ and we assume that $\Theta$ is a subset of $\R_{+}^5$ such that
\bea
\label{ParamSpace}
\Theta\cap\{(\lambda,K,\sigma,\alpha,\mu)\in\R_+^5:2\lambda<\sigma^2\} = \emptyset.
\eea
Recall that this is required to keep the $Y_i(t)$'s positive.

In the theorems and corollaries below we give the strong consistency and the asymptotic normality of the ML-estimator.
The proofs are given in the Appendix. 
The consistency proof follows the approach suggested by Wald \cite{Wald} and the asymptotic normality follows the lines of the classical approach of Cram\'er (see e.g.\ \cite{Ferguson}).

\begin{thm}[Consistency]\label{ThmCons}
Let $\Theta$ be a compact subset of $\R_+^5$ such that (\ref{ParamSpace}) holds.
Then, for $\theta_0\in\Theta$, the estimator $\hat{\theta}_n$ in expression (\ref{StatMLE}) is strongly consistent, i.e.\ as $n\rightarrow\infty$,
\begin{center}
$\hat{\theta}_n \stackrel{a.s.}{\longrightarrow} \theta_0$.
\end{center}
\end{thm}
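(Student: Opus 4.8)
The plan is to follow Wald's method \cite{Wald}, using the fact that by construction in (\ref{StatMLE}) the estimator splits into two blocks maximised over disjoint parameter groups, so that $\hat{\theta}_n\stackrel{a.s.}{\longrightarrow}\theta_0$ is equivalent to the joint consistency of $\hat{\theta}_{1,n}=(\widehat{\lambda}_n,\widehat{K}_n,\widehat{\sigma}_n,0,0)$ and $\hat{\theta}_{2,n}=(0,0,0,\widehat{\alpha}_n,\widehat{\mu}_n)$. The block $\hat{\theta}_{2,n}$ maximises $l_{2,n}$, which depends only on the transition probabilities $p_N$ of the ID-process; under the present equidistant increasing-$n$ scheme this is exactly the discretely sampled immigration--death likelihood whose strong consistency is proved in \cite{CronieYu}, so $(\widehat{\alpha}_n,\widehat{\mu}_n)\stackrel{a.s.}{\longrightarrow}(\alpha_0,\mu_0)$ may be invoked directly. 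The real work concerns the mark block, i.e.\ the maximiser of $l_{1,n}(\theta)=\sum_{k=1}^{n}\sum_{i\in\omega_k}\log\pi(m_{ik};\lambda,K,\sigma)$.

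For the mark block I would first establish a law of large numbers for the normalised contrast $n^{-1}l_{1,n}$. Under stationarity the sampled configurations $\{(|\omega_k|,(m_{ik})_{i\in\omega_k})\}_{k\geq1}$ form a stationary sequence, and the sampled chain $\{N(T_k)\}$, being irreducible and positive recurrent, is ergodic; hence time averages converge to stationary expectations. Writing $g_\theta$ for the per-step summand and applying Birkhoff's ergodic theorem together with Wald's identity for the random inner sum (the marks being independent of the point process and marginally $\pi_{\theta_0}$-distributed) gives, for each fixed $\theta\in\Theta$,
\be
\frac{1}{n}\,l_{1,n}(\theta)\ \stackrel{a.s.}{\longrightarrow}\ \ell_1(\theta):=\frac{\alpha_0}{\mu_0}\,\E_{\pi_{\theta_0}}\!\left[\log\pi(X;\lambda,K,\sigma)\right],
\ee
where $\alpha_0/\mu_0=\E[\,|\omega_1|\,]$ is the stationary mean of $N$. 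Finiteness of $\ell_1$ on the compact set $\Theta$ follows from $\E_{\pi_{\theta_0}}|\log X|<\infty$ and $\E_{\pi_{\theta_0}}[X]<\infty$.

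To upgrade this pointwise convergence to convergence of the argmax I would use Wald's bracketing: for $\theta\in\Theta$ and $\rho>0$ set $\Psi_{\theta,\rho}=\sup_{|\theta'-\theta|\leq\rho}g_{\theta'}$, check by a dominated-convergence argument (the dominating function supplied by the Gamma moments above and by the constraint $2\lambda/\sigma^2\geq1$, which bars a singularity of $\log\pi$ at the origin) that $\E[\Psi_{\theta,\rho}]\downarrow\E[g_\theta]$ as $\rho\downarrow0$, and then cover the compact complement of any ball $B(\theta_0,\varepsilon)$ by finitely many such neighbourhoods. Applying the ergodic theorem to each $\Psi_{\theta,\rho}$ yields $\limsup_n n^{-1}\sup_{\theta\notin B(\theta_0,\varepsilon)}l_{1,n}(\theta)<\ell_1(\theta_0)$ a.s., forcing the maximiser into $B(\theta_0,\varepsilon)$ eventually.

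The step I expect to be the genuine obstacle is the identification of the maximiser of $\ell_1$. Gibbs' inequality gives $\ell_1(\theta)\leq\ell_1(\theta_0)$ with equality iff $\pi_\theta=\pi_{\theta_0}$ almost everywhere, so everything hinges on injectivity of $\theta\mapsto\pi_\theta$ on $\Theta$. Since $\pi(\cdot;\lambda,K,\sigma)$ is the $\Gamma$ density with shape $2\lambda/\sigma^2$ and scale $\sigma^2K/2\lambda$, equality of the two stationary laws forces $K=K_0$ and $\sigma^2/\lambda=\sigma_0^2/\lambda_0$ but leaves $(\lambda,\sigma)$ free on a one-dimensional curve; the argmax of the stationary contrast is therefore \emph{a priori} not a singleton, and individual consistency of $\widehat{\lambda}_n$ and $\widehat{\sigma}_n$ does not follow from $l_{1,n}$ alone. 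Overcoming this would require either restricting $\Theta$ to a section transverse to that curve, reparametrising to the identifiable pair $(K,2\lambda/\sigma^2)$, or reinstating the transition densities (\ref{TransitionCIR}) discarded under the stationarity simplification, since only the genuine CIR transition law separates $\lambda$ from $\sigma$ dynamically.
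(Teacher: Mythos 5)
Your overall route is the paper's: the estimator splits into the ID block, whose strong consistency is imported from \cite{CronieYu}, and the mark block, which is handled by a Wald-type argument (the paper normalises by $A(n)=\sum_k|\omega_k|$ and relabels the marks into an iid sequence to apply Ferguson's uniform SLLN, whereas you normalise by $n$ and invoke Birkhoff plus $A(n)/n\to\alpha_0/\mu_0$; these are interchangeable, and your bracketing step is exactly what the uniform SLLN delivers). The domination of $\log\pi$ via $\E_{\pi_{\theta_0}}[|\log X|]<\infty$ and $\E_{\pi_{\theta_0}}[X]<\infty$ on a compact $\Theta$ also matches the paper.

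Where you diverge is the identification step, and here you are right and the paper is not. The paper's proof simply asserts that $U(x,\theta)=\log\pi(x;\theta)-\log\pi(x;\theta_0)\equiv0$ forces $\theta=\theta_0$, but the stationary law is $\Gamma\bigl(2\lambda/\sigma^2,\ \sigma^2K/2\lambda\bigr)$, so $\pi_\theta=\pi_{\theta_0}$ exactly when $K=K_0$ and $2\lambda/\sigma^2=2\lambda_0/\sigma_0^2$; one checks directly in the paper's expression (\ref{contraction}) that every $\theta=(t\lambda_0,K_0,\sqrt{t}\sigma_0)$, $t>0$, gives $U(x,\theta)\equiv0$. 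Since the stationary likelihood $l_{1,n}$ depends on $(\lambda,\sigma)$ only through $2\lambda/\sigma^2$, the argmax set is a curve, and $\hat\lambda_n$, $\hat\sigma_n$ cannot be separately consistent without an extra assumption. The paper itself implicitly concedes this: the remark following Theorem \ref{ThmAsNorm} notes the over-parametrisation of the $\Gamma(\beta_1,\beta_2)$ family and the resulting singular Fisher information, and Theorem \ref{ThmAsNorm} accordingly assumes one of $\lambda_0$, $\sigma_0$ known -- yet Theorem \ref{ThmCons} is stated, and ``proved,'' for the full five-dimensional $\theta$. Your proposed remedies (fixing $\lambda_0$ or $\sigma_0$, reparametrising to $(K,2\lambda/\sigma^2)$, or retaining the transition densities (\ref{TransitionCIR}) so that the dynamics separate $\lambda$ from $\sigma$) are the correct ways to close the argument; with any one of them in place, the rest of your proof goes through and coincides with the paper's.
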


Now, by putting some additional restrictions on the parameters we may also prove the following theorem.

\begin{thm}[Asymptotic normality]\label{ThmAsNorm}
Let $\theta_0$ be an interior point of $\Theta$, where $\Theta$ is a compact subset of $\R_+^5$ such that (\ref{ParamSpace}) holds.
Require further that $\theta_0$ and $\Delta>0$ are such that $(\log(\alpha_0 + \mu_0) - \log(\alpha_0))/\mu_0 \geq 2\Delta$.

Assume that $\lambda_0$ is known, so that $\hat{\theta}_n = (\hat{K}_n, \hat{\sigma}_n, \hat{\alpha}_n, \hat{\mu}_n)$ is the ML-estimator of $\theta_0 = (K_0,\sigma_0,\alpha_0,\mu_0)$.
Then, as $n\rightarrow\infty$, we obtain
\beann
\sqrt{n}\big(\hat{\theta}_{n} - \theta_0\big)
\stackrel{d}{\longrightarrow}
\mathbf{Y}\sim
N\left(\mathbf{0}_{4\times1},
\begin{bmatrix}
\frac{\mu_0}{\alpha_0}\frac{K_{0}^2\sigma_{0}^2}{2\lambda_{0}}	&	0  & \mathbf{0}_{1\times2}\\
0 & \frac{\mu_0}{\alpha_0}\frac{\sigma_{0}^4}{8\lambda_{0}C(\theta_0)}	& \mathbf{0}_{1\times2}\\
\mathbf{0}_{2\times1}	&	 \mathbf{0}_{2\times1}	 &	I_N(\theta_0)^{-1}
\end{bmatrix}
\right),
\eeann
where $C(\theta) = \frac{2\lambda}{\sigma^2}\psi'\left(\frac{2\lambda}{\sigma^2}\right) - 1 > 0$, $\psi(x) = \Gamma'(x)/\Gamma(x)$, $\Gamma(\cdot)$ is the gamma function, 
$\mathbf{0}_{i\times j}$ denotes the $i\times j$ zero matrix and the $2\times2$ matrix $I_N(\theta_0)^{-1}$, which can be found in expression (\ref{InvFisherImDea}), is the covariance matrix related to the ID-process.

Similarly, when $\sigma_0$ is known, we estimate $\theta_0 = (\lambda_0,K_0,\alpha_0,\mu_0)$
by means of the ML-estimator $\hat{\theta}_n = (\hat{\lambda}_n, \hat{K}_n, \hat{\alpha}_n, \hat{\mu}_n)$ and, as $n\rightarrow\infty$, we obtain
\beann
\sqrt{n}\big(\hat{\theta}_{n} - \theta_0\big)
\stackrel{d}{\longrightarrow}
\mathbf{Y}\sim
N\left(\mathbf{0}_{4\times1},
\begin{bmatrix}
\frac{\mu_0}{\alpha_0}\frac{\lambda_{0}\sigma_{0}^2}{2 C(\theta_0)}	&	0  & \mathbf{0}_{1\times2}\\
0  & \frac{\mu_0}{\alpha_0}\frac{K_{0}^2\sigma_{0}^2}{2\lambda_{0}}	& \mathbf{0}_{1\times2}\\
\mathbf{0}_{2\times1}	&	\mathbf{0}_{2\times1}	&	 I_N(\theta_0)^{-1}
\end{bmatrix}
\right).
\eeann

The Fisher information for the discretely sampled ID-process is given by
\bea
\label{FisherImDea}
I_N(\theta_0) &=&
\begin{pmatrix}
I_N(\theta_0)_{11}	& I_N(\theta_0)_{12}\\
I_N(\theta_0)_{12}	&	I_N(\theta_0)_{22}
\end{pmatrix},
\eea
where
\beann
I_N(\theta_0)_{11} &=&	(\Xi-1)\frac{\rho_{0}^2}{\alpha_{0}^2},
\qquad
I_N(\theta_0)_{12} 	=		\frac{(\Xi-1)\rho_{0}(\mu_0\Delta - \tau_{0}) - \mu_{0}\Delta}{\mu^2_{0}},\\
I_N(\theta_0)_{22} &=&	\frac{\alpha_0^2 \mu_0 \Delta(2\tau_0 -\mu_0t)}{\rho_0 \mu_0^4} + \frac{\alpha_0^2 \Delta^2 \e^{-\mu_0 \Delta}}{\mu_0^2 \rho_0}
+ \frac{(\Xi-1)\alpha_{0}^2\left(\tau_{0} - \mu_{0}\Delta\right)^2}{\mu_{0}^4},
\eeann
$\Xi = \sum_{i, j\in\N} \frac{p_{N}\left(\Delta,j-1|i;\alpha_0,\mu_0\right)^2}{p_{N}\left(\Delta,j|i;\alpha_0,\mu_0\right)} \pi_{N}(i;\alpha_0,\mu_0)$,
$\tau_0 = 1 - \e^{-\mu_0 \Delta} - \mu_0 \Delta\e^{-\mu_0 \Delta}$, 
$\rho_{0} = \frac{\alpha_0}{\mu_0}(1 - e^{-\Delta\mu_0})$, 
and its inverse is given by
\begin{equation}
\label{InvFisherImDea}
\begin{array}{l}
I_N(\theta_0)^{-1} = \frac{\mu_0}{\Delta\left(\left(1 + \e^{-\mu_0 \Delta}\right)\rho_0(\Xi - 1) - 1\right)}\\
\times
\begin{pmatrix}
\frac{\rho_0\left(2\tau_0 - \mu_0 \Delta\left(1 - \e^{-\mu_0 \Delta}\right)\right) + \frac{\rho_0^2}{\mu_0 \Delta}(\Xi - 1)(\tau_0 - \mu_0 \Delta)^2}{\left(1 - \e^{-\mu_0 \Delta}\right)^2}
& 1 + \frac{\rho_0(\Xi - 1)(\tau_0 - \mu_0 \Delta)}{\mu_0 \Delta} \\
1 + \frac{\rho_0(\Xi - 1)(\tau_0 - \mu_0 \Delta)}{\mu_0 \Delta}
& \frac{(\Xi - 1)\left(1 - \e^{-\mu_0 \Delta}\right)^2}{\mu_0 \Delta}
\end{pmatrix}.
\end{array}
\end{equation}

\end{thm}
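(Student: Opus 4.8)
The plan is to exploit the factorisation of the stationary likelihood and the separability of the estimator in (\ref{StatMLE}), and then to treat each parameter block by the classical Cram\'er argument: a Taylor expansion of the score around $\theta_0$ (with the remainder controlled by the already-established consistency, Theorem \ref{ThmCons}), a central limit theorem for the normalised score, and a law of large numbers for the observed information. Since $l_{1,n}$ depends on $\theta$ only through $(\lambda,K,\sigma)$ and $l_{2,n}$ only through $(\alpha,\mu)$, the two estimation problems decouple, and the block-diagonal form of the limiting covariance follows once I show that the two scores are asymptotically uncorrelated. The latter is immediate here: conditionally on the whole ID-path $\{N(t)\}$ (equivalently on the counts $|\omega_k|$), the observed marks are, under the stationarity assumption of Section \ref{SectionDistribution}, iid $\Gamma(2\lambda_0/\sigma_0^2,\sigma_0^2 K_0/2\lambda_0)$ draws, so the mark score has conditional mean zero, whereas the ID-score is a function of the ID-path alone; hence $\E[\nabla l_{1,n}\,\nabla l_{2,n}^{T}]=0$, which forces the zero off-diagonal blocks.

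For the $(\alpha,\mu)$-block I would simply invoke \cite{CronieYu}: the factor $l_{2,n}$ is exactly the log-likelihood of the discretely and equidistantly sampled ID-process started in $\pi_N$ (Lemma \ref{ImDeTransProb}), for which $\sqrt{n}$-asymptotic normality with limiting covariance $I_N(\theta_0)^{-1}$ is already established there. The role of the standing assumption $(\log(\alpha_0+\mu_0)-\log(\alpha_0))/\mu_0\geq 2\Delta$ is to guarantee that the series $\Xi$ converges and that $I_N(\theta_0)$ is finite and positive definite; I would record this and then read off the explicit inverse (\ref{InvFisherImDea}).

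The substantive work is the $(\lambda,K,\sigma)$-block. Here I would first note, again by the stationarity assumption, that the totality of observed marks $\{m_{ik}:i\in\omega_k,\ 1\leq k\leq n\}$ are iid with the Gamma density $\pi$ of (\ref{StatDensity}), and that their number $S_n=\sum_{k=1}^{n}|\omega_k|$ satisfies $S_n/n\to\alpha_0/\mu_0$ almost surely by the ergodic theorem applied to the stationary positive-recurrent process $N(t)$ (mean $\alpha_0/\mu_0$ by Lemma \ref{ImDeTransProb}). Conditioning on the ID-path, $\hat{\theta}_{1,n}$ is the MLE of a reduced Gamma family from $S_n$ iid observations, so Cram\'er's theorem gives $\sqrt{S_n}(\hat{\theta}_{1,n}-\theta_0)\stackrel{d}{\longrightarrow}N(0,i(\theta_0)^{-1})$ with $i(\theta_0)$ the per-observation Gamma information; a random-index (Anscombe) argument together with Slutsky's lemma, using $\sqrt{n/S_n}\to\sqrt{\mu_0/\alpha_0}$, then produces the prefactor $\mu_0/\alpha_0$. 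It remains to compute $i(\theta_0)$ in the $(\lambda,K,\sigma)$-parametrisation. Writing $\pi$ in terms of shape $a=2\lambda/\sigma^2$ and rate $\gamma=a/K$, the $K$-score is proportional to $(X-K)$ while the $\sigma$- (resp.\ $\lambda$-) score is an affine function of $a\log X-\gamma X$; the two diagonal entries then reduce to $\Var(X)=\sigma_0^2 K_0^2/2\lambda_0$ and to $\Var(a\log X-\gamma X)=a(a\psi'(a)-1)$, which is exactly where the trigamma constant $C(\theta_0)=\frac{2\lambda_0}{\sigma_0^2}\psi'(\frac{2\lambda_0}{\sigma_0^2})-1$ enters, giving the entries $\frac{\sigma_0^4}{8\lambda_0 C(\theta_0)}$ and $\frac{\lambda_0\sigma_0^2}{2C(\theta_0)}$ after inversion.

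The crucial simplification, and the step I would single out, is the vanishing of the $K$--$\sigma$ (resp.\ $K$--$\lambda$) cross term, which is what makes the $2\times2$ mark block diagonal. This follows from the identity $\Cov(X,\log X)=1/\gamma$ for the Gamma law, since the cross-information is proportional to $a\Cov(X,\log X)-\gamma\Var(X)=a/\gamma-a/\gamma=0$; I would verify this together with $\Var(\log X)=\psi'(a)$, and check that $C(\theta_0)>0$ (a consequence of $a\psi'(a)>1$ for every $a>0$) so that the information is invertible. Assembling the three ingredients --- the cited ID-block, the Gamma information just computed, and the zero cross-covariance of the two scores --- and inverting the resulting block-diagonal information matrix yields both limiting covariance matrices in the statement. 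The main obstacle I anticipate is the careful bookkeeping of the random sample size $S_n$ and the justification that conditioning on the ID-path leaves the iid Gamma structure of the marks intact; everything else is routine once the two Gamma moment identities above are in hand.
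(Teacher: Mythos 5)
Your proposal is correct in substance and reaches the stated covariance matrices, but it takes a genuinely different route from the paper at the key step. The paper also Taylor-expands the score, imports the $(\alpha,\mu)$-block from \cite{CronieYu}, and kills the cross-covariance by conditioning on the index sets $\{\Omega(k\Delta)\}$ exactly as you do; where you diverge is the CLT for the mark score. The paper normalises by $\sqrt{n}$ (the number of sample times), writes $\frac{1}{\sqrt{n}}\dot{l}_n(\theta_0)=\sum_{k=1}^n(X_k+R_k)$ with $X_k$ the random-size sum of mark scores at time $k\Delta$ and $R_k$ the ID-score increment, and verifies the Lindeberg condition for this non-identically-distributed triangular array by splitting it into six sums (proving the stronger Lyapunov condition with $p=4$ for the $X_k$'s, which is why the fourth moments of $Z$ and $\log Z$ appear in the Appendix); the factor $\alpha_0/\mu_0$ enters through $\frac{1}{n}\sum_k\E[N_k]\to\alpha_0/\mu_0$. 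You instead condition on the ID-path, treat the marks as a single iid Gamma sample of random size $S_n=A(n)$, apply the classical iid Cram\'er theorem at rate $\sqrt{S_n}$, and convert to $\sqrt{n}$ via $S_n/n\to\alpha_0/\mu_0$ and an Anscombe/Slutsky step. Your route is shorter and your derivation of the diagonal Gamma information via $\Cov(X,\log X)=1/\gamma$, $\Var(\log X)=\psi'(a)$ is cleaner than the paper's brute-force check of the second-derivative expressions; what the paper's heavier Lindeberg--Feller machinery buys is the \emph{joint} asymptotic normality of all four components in one stroke. On that point your argument is slightly loose: marginal normality of the two blocks plus zero score correlation does not by itself yield joint normality with block-diagonal covariance. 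The gap is repairable with the structure you already have --- conditionally on the ID-path the mark-block limit is deterministic while the ID-block estimator is measurable with respect to the conditioning, so a conditional characteristic-function argument gives asymptotic independence --- but this step should be made explicit rather than attributed to uncorrelatedness. One further caveat: both you and the paper rely on the stationary likelihood (\ref{StatJointDensity}) treating repeated observations of the same individual as independent $\pi$-draws, so the conditional-iid structure you invoke is exactly the paper's working assumption and your argument is consistent with it.
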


The reason that we require knowledge of either $\lambda_0$ or $\sigma_0$ in Theorem \ref{ThmAsNorm} is related to the over parametrization of the $\Gamma(\beta_1,\beta_2)$-distribution, $\beta_1 = 2\lambda/\sigma^2$, $\beta_2 = \sigma^2 K/2\lambda$.
We note to begin with that $\beta_2 = K/\beta_1$ and for a random variable $Z\sim\Gamma(\beta_1,\beta_2)$, by consulting expression (\ref{2orderPartialDeriavtives}) in the Appendix, we obtain the related positive semi-definite singular (non-invertible) Fisher information
\beann
\label{FisherGamma}
I_Z(\theta) = -\E_{\theta}\left[\frac{\partial^2\log\pi(Z;\lambda,K,\sigma)}{\partial(\lambda,K,\sigma)^2}\right]
=
\frac{2 C(\theta)}{\sigma^2}
\begin{pmatrix}
\frac{1}{\lambda}
& 0
& \frac{-2}{\sigma}\\
  0
& \frac{\lambda}{K^2}C(\theta)^{-1}
& 0 \\
  \frac{-2}{\sigma}
& 0
& \frac{4\lambda}{\sigma^2}
\end{pmatrix}.
\eeann

\begin{rem}
From the proofs of Theorem \ref{ThmCons} and Theorem \ref{ThmAsNorm} it may be seen that if we reduce $l_{1,n}(\theta)$ to
\beann
\widetilde{l}_{1,n}(\theta)	&=&	\sum_{k=1}^{n} \sum_{i\in\widetilde{\omega}_{k}} \log\pi(m_{ik};\lambda,K,\sigma),
\eeann
where $\widetilde{\omega}_{k}\subseteq\omega_{k}$ ($\emptyset=\widetilde{\omega}_{k}$ iff $\omega_{k}=\emptyset$), the proofs of the consistency and the asymptotic normality still go through (with obvious modifications).
However, the convergence speed will be different as well as the Fisher information $I(\theta_0)$.
An example of such a reduction is to choose $\widetilde{\omega}_{k} = \{\omega_{k,1}\}$, i.e.\ we choose just one element from $\omega_{k}$.
Another example of a reduction under which the results still hold is to consider the subsequence $k_n=n\wedge A(n)$, $A(n)=\sum_{k=1}^{n}|\omega_{k}|$, and the reduction
\beann
\widetilde{l}_{1,n}(\theta)	&=&	\sum_{i=1}^{k_n} \log\pi(m_{ik};\lambda,K,\sigma).
\eeann
\end{rem}

\section{Evaluation of the estimators}\label{SectionEvaluation}
We now turn to the numerical evaluation of our ML-estimators and precisely we are interested in investigating the asymptotic robustness of the stationarity assumption. 
This is carried out by assuming that the data is generated with some fixed $M_i^0=M_0$ and some $\theta_0$, while we instead are employing the estimator $\hat{\theta}_n$ in expression (\ref{StatMLE}) of Section \ref{SectionStationaryMLE}, i.e.\ the estimator based on the assumption that $M_i^0\sim\pi$, to estimate $\theta_0$. 
We then compare the behaviours of $|\hat{\theta}_n-\theta_0|$ and $|\widetilde{\theta}_n-\theta_0|$.

We first note that we from expression (\ref{MeanCIR}) may conclude that $\e^{-t\lambda_0/K_0} = |K_0 - \E[Y_{i}(t)|Y_{i}(0) = M_i^0]|/|K_0 - M_i^0|$.
Clearly, if $|K^0 - M_i^0|$ is small then $Y_i(t)$ quickly approaches its steady state $K_0$, whence the distance $|\widetilde{\theta}_n-\hat{\theta}_n|$ between the two estimators should be reduced.
The same should hold if (additionally) $\lambda_0$ is large, since under this condition the mean reversion is strong, which results in small deviations from the long term mean $K_0$.
Similarly, if $\sigma_0$ is small then the random fluctuations do not influence the growth as much as the drift coefficient $\lambda_0(1-Y_i(t)/K_0)$ and hereby the drift becomes the main determining factor of the speed of convergence to $K_0$.
We also note that if $\mu_0$ is small then the expected lifetime of an individual, $\E_{\theta_0}[L_i] = 1/\mu_0$, tends to be longer whereby we obtain more samples of $Y_i(t)$ when it is close to its steady state $K_0$.

We simulate $30$ trajectories of $\Phi_M(t)$ on $W=[0,1]^2$ and sample them discretely according to the sampling scheme $T_k=k$, $k=1,\ldots,100$.
Then, by using the stationary ML-estimator $\hat{\theta}_n$ of Section \ref{SectionStationaryMLE}, we reestimate the parameters and compare the behaviour of $\hat{\theta}_n$ with the (non-stationary) ML-estimator $\widetilde{\theta}_n$ of Section \ref{SectionNonStationaryMLE}.
We use different values for the parameters $\theta_0$ and $M_i^0$ to assess when $|\hat{\theta}_n-\theta_0|$ and $|\widetilde{\theta}_n-\theta_0|$ are small.
We here only consider the estimation of $\lambda_0$, $K_0$ and $\sigma_0$ since the performance of $(\hat{\alpha}_n,\hat{\mu}_n)$ already has been evaluated in \cite{CronieYu}.

As we can see in Table \ref{TableEvaluation}, as expected, in the case of $\hat{\theta}_n$ the main determining factor of the bias is the size of $\lambda_0$, although the size of $|K-M_i^0|$ certainly plays a role.
It should also be noted that a higher $\sigma_0$ seems to imply a lower bias for $\hat{\sigma}_n$.
Furthermore, we see that $\widetilde{\theta}_n$ outperforms $\hat{\theta}_n$ in each case given in Table \ref{TableEvaluation}, which is to be expected since $\widetilde{\theta}_n$ is the estimator which is based on the correct model assumption.
Note, however, that there are parameter choices for which even the performance of $\widetilde{\theta}_n$ is a bit poor.

\begin{table}[htbp]
	\centering
		\begin{tabular}{lccclccc}
		\hline
		$M_i^0 = 0.1$								&	$\lambda$	&	$K$			&	$\sigma$	&	$M_i^0 = 0.1$						&	$\lambda$	&	$K$			&	$\sigma$\\
		\hline
		True ($\theta_0$)						&	0.5				&	5				&	0.1				&	 True ($\theta_0$)				&	0.5				&	5				&	0.1			\\
		Mean $\widetilde{\theta}_n$	& 0.5027		&	5.1698	&	0.1086		&	Mean $\hat{\theta}_n$		&	 3.2856		&	3.9807	&	0.9761	\\
		Bias $\widetilde{\theta}_n$	& 0.5\%			&	3.4\%		&	8.6\%			&	Bias $\hat{\theta}_n$		 &	557.1\%		&	-20.4\%	&	876.1\%	\\
		S.e. $\widetilde{\theta}_n$	&	0.0605		&	0.5623	&	0.0139		&	S.e. $\hat{\theta}_n$	&	 1.3928		&	0.1266	&	0.2480		\\
		\hline
		$M_i^0 = 5$									&	$\lambda$	&	$K$			&	$\sigma$	&	$M_i^0 = 5$							&	$\lambda$	&	$K$			&	$\sigma$\\
		\hline
		True ($\theta_0$)						&	0.5				&	5				&	0.1				&	 True ($\theta_0$)				&	0.5				&	5				&	0.1			\\
		Mean $\widetilde{\theta}_n$	& 0.4241		&	5.0385	&	0.1006		&	Mean $\hat{\theta}_n$		&	 2.9054		&	4.9987	&	0.2185	\\
		Bias $\widetilde{\theta}_n$	& -15.2\%		&	0.8\%		&	0.6\%			&	Bias $\hat{\theta}_n$		 &	481.1\%		&	-0.03\%	&	118.5\%	\\
		S.e. $\widetilde{\theta}_n$	&	0.1981		&	0.4780	&	0.0063		&	S.e. $\hat{\theta}_n$		&	 1.2660		&	0.0539	&	0.0540	\\
		\hline
		$M_i^0 = 0.1$								&	$\lambda$	&	$K$			&	$\sigma$	&	$M_i^0 = 0.1$						&	$\lambda$	&	$K$			&	$\sigma$\\
		\hline
		True ($\theta_0$)						&	3					&	5				&	0.1				&	 True ($\theta_0$)				&	3					&	5				&	0.1			\\
		Mean $\widetilde{\theta}_n$	& 2.9950		&	4.9926	&	0.1036		&	Mean $\hat{\theta}_n$		&	 3.1261		&	4.8713	&	0.2425	\\
		Bias $\widetilde{\theta}_n$	& -0.2\%		&	-0.1\%	&	3.6\%			&	Bias $\hat{\theta}_n$		 &	4.2\%			&	-2.6\%	&	142.5\%	\\
		S.e. $\widetilde{\theta}_n$	&	0.2196		&	0.0708	&	0.0121		&	S.e. $\hat{\theta}_n$		&	 1.2823		&	0.0320	&	0.0569	\\
		\hline
		$M_i^0 = 0.1$								&	$\lambda$	&	$K$			&	$\sigma$	&	$M_i^0 = 0.1$						&	$\lambda$	&	$K$			&	$\sigma$\\
		\hline
		True ($\theta_0$)						&	3					&	5				&	0.5				&	 True ($\theta_0$)				&	3					&	5				&	0.5			\\
		Mean $\widetilde{\theta}_n$	& 2.9866		&	5.0513	&	0.4974		&	Mean $\hat{\theta}_n$		&	 2.7822		&	4.9437	&	0.5126	\\
		Bias $\widetilde{\theta}_n$	& -0.4\%		&	1.0\%		&	-0.5\%		&	Bias $\hat{\theta}_n$		 &	-7.3\%		&	-1.1\%	&	2.5\%		\\
		S.e. $\widetilde{\theta}_n$	&	0.2883		&	0.1505	&	0.0226		&	S.e. $\hat{\theta}_n$		&	 1.2616		&	0.1524	&	0.1288	\\
		\hline
		\end{tabular}
		\caption{Parameter (re)estimation of $\lambda_0$, $K_0$ and $\sigma_0$ using the non-stationary ML-estimator $\widetilde{\theta}_n$
						and the stationary ML-estimator $\hat{\theta}_n$.
						The estimates are based on 30 realisations of $\{\Phi_M(t)\}_{t\in[0,100]}$ (non-stationary) sampled at $T_k=k$, $k=1,\ldots,100$
						(with time discretisation step $dt=0.01$) which are generated from $\alpha_0=0.5$, $\mu_0=0.01$, $W=[0,1]^2$ and the above parameters ($M_i^0$ and $\theta_0$).
						}
	\label{TableEvaluation}
\end{table}

\section{Modelling Scots pines}\label{SectionPines}

As previously mentioned, the SG-process is constructed as a stochastic extension of the GI-process, under the assumption that the interaction between the marks is negligible. 
Hence, when considering the GI-process' main application area, which is the dynamical modelling of forest stands, it makes sense to employ the stationary mark SG-process when we want to model a homogenous forest stand (trees of the same species with similar ages) where e.g.\ the distances between the trees are large (we may ignore the interaction). 

One data set which (arguably) may be considered to fulfil these requirements is the set of Swedish Scots pines considered in \cite{Cronie}, which is illustrated in Figure \ref{PinesExample} (all tree radii have been scaled by a factor of 10 for increased visibility). 
The spatial region $W$ under consideration here is given by a circular region of radius 10 meters and the actual data set is given by a time series of marked point patterns, recorded at the years 1985, 1990 and 1996, where the approximate age of the forest stand in 1985 was 22 years. 
Hereby we may set $T_1 = 22$, $T_2 = 27$ and $T_3 = 33$, and we have $N(T_1)=13$, $N(T_2)=26$ and $N(T_3)=43$.
To be precise, for each $T_k$, $k=1,2,3$, each marked point pattern consists of measurements 
of radii (at breast height) $m_{ik}$ and locations (stock centres) $x_i\in W$ of the trees $i\in\{j:m_{jk}>0\}$ which are present at $T_k$, and only trees having reached a radius of 0.005 meter are included in the data set.

\begin{figure}[!htbp]
	\begin{center}
		\begin{tabular}{ccc}
			{\includegraphics[width=0.3\textwidth]{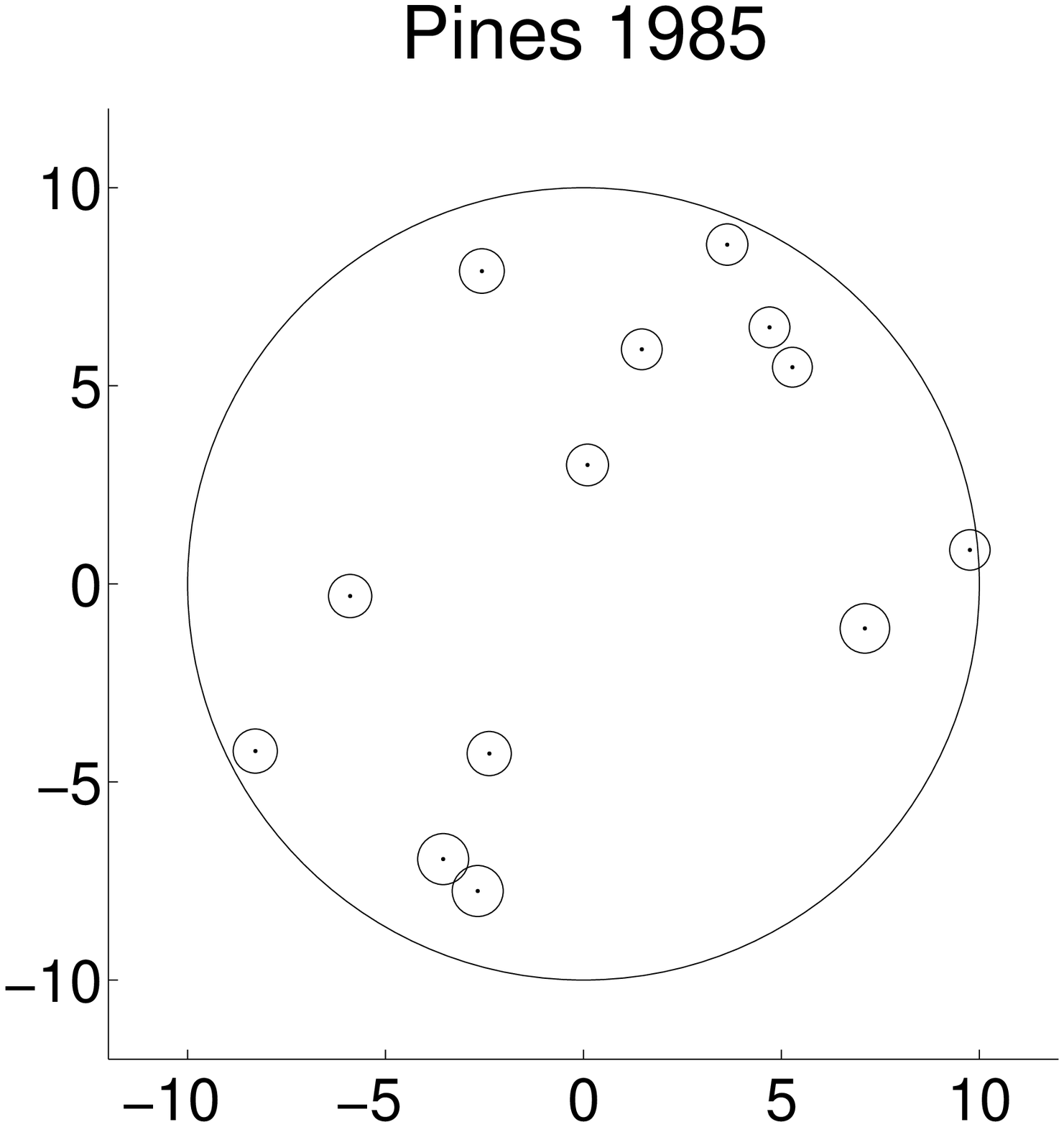}} &
			{\includegraphics[width=0.3\textwidth]{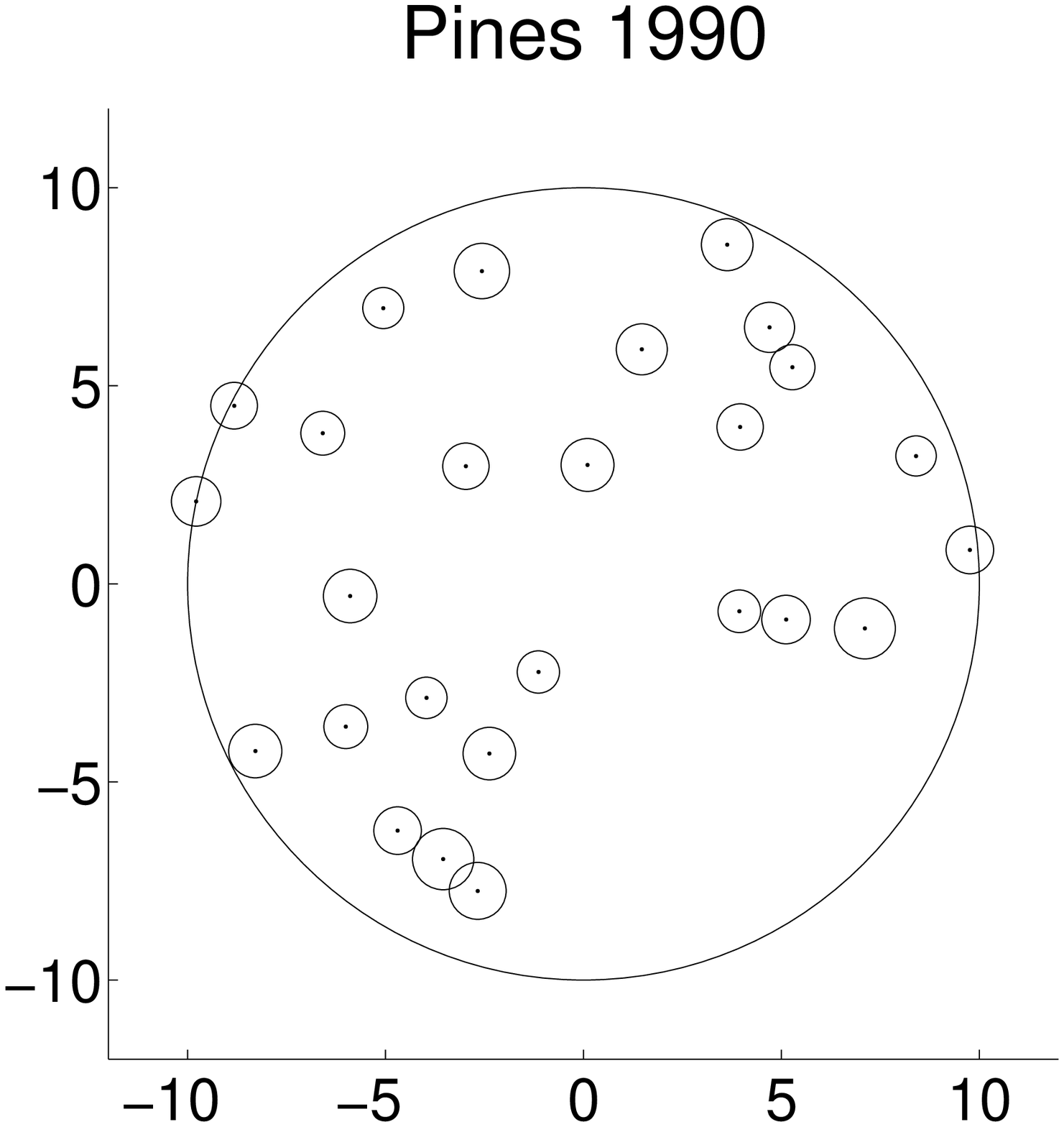}} &
     			{\includegraphics[width=0.3\textwidth]{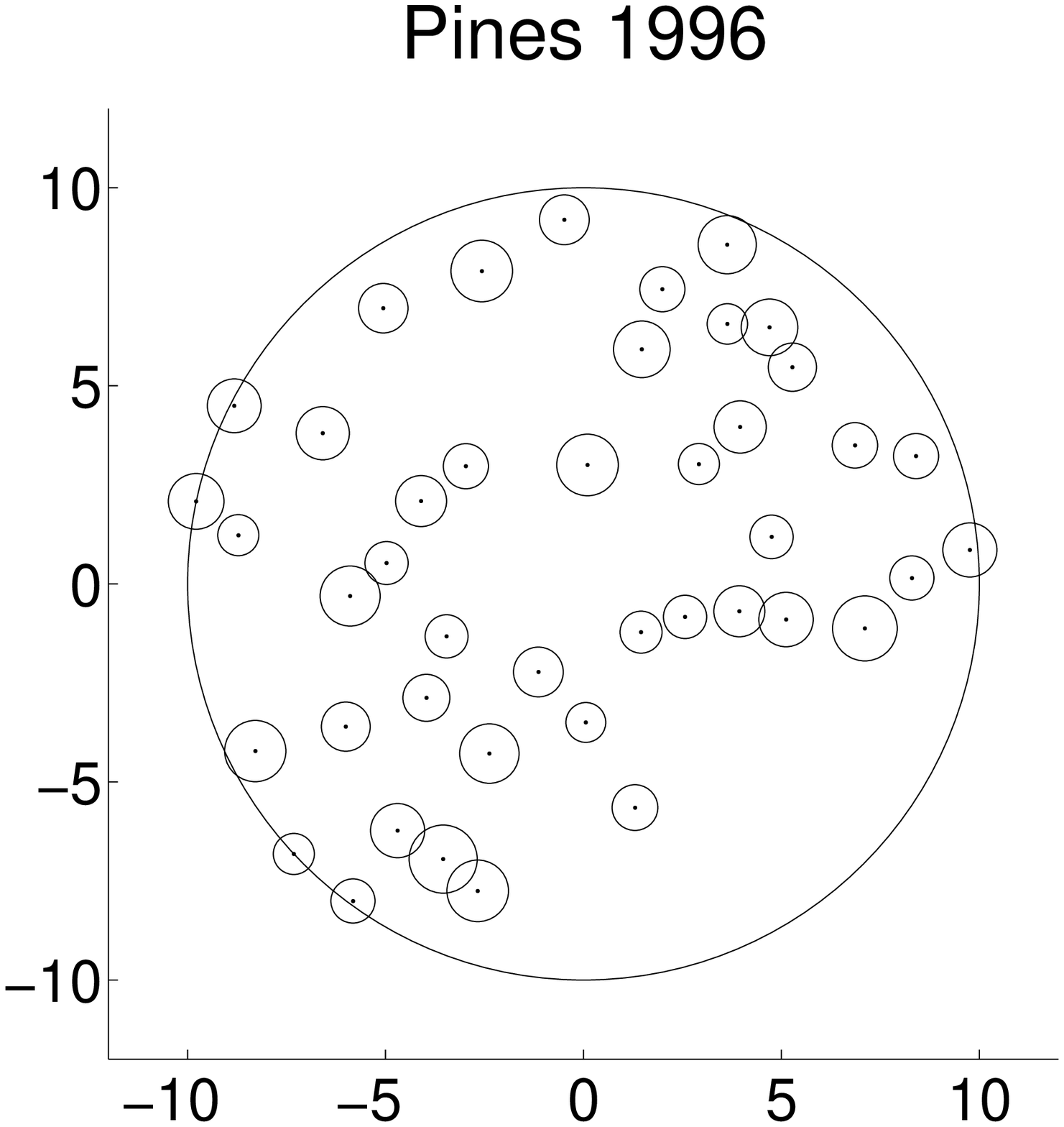}}
   	\end{tabular}
	\end{center}
	\caption{Swedish Scots pines: plots recorded in $1985$ (left), $1990$ (middle) and $1996$ (right). The radii of the pines are scaled by a factor of 10.}
	\label{PinesExample}
\end{figure}
The approach used in \cite{Cronie} to model this data set was to employ the so-called logistic growth function $f(Y_{i}(t);\theta) = \lambda Y_{i}(t)\left(1-Y_{i}(t)/K\right)$ as individual/open growth function in (\ref{GI}) 
and the so-called area-interaction function $h(\cdot)$ (see expression (\ref{GI})) to describe the spatial interaction between the marked points. 
We note that both this individual growth function and the (linear growth function) drift coefficient $f(Y_{i}(t);\theta) = \lambda\left(1-Y_{i}(t)/K\right)$ in the CIR process are special cases of the so-called Von Bertalanffy-Chapman-Richards (VBCR) growth function (see e.g.\ \cite{RenshawComasMateu}), whence their behaviours are quite similar. 

As previously mentioned, besides $\alpha$ and $\mu$, the parameters under consideration here are the growth rate $\lambda$, the carrying capacity $K$ and the diffusion parameter $\sigma$. 
In Table \ref{ResultsPines} we find, together with the results obtained in \cite{Cronie}, the results obtained after having fit the SG-process to the data set in Figure \ref{PinesExample}. 
Note that the choice $M_i^0=0.005$ has been made (in the non-stationary SG-process and in the GI-process) since the trees in the data set have been measured only once they have grown to at least a radius (at breast height) of 0.05 meter.
Regarding the estimation of $(\alpha,\mu)$, \cite{Cronie} obtained $\hat{\alpha} = 0.0042$ and $\hat{\mu} = 0$ (based on the estimators given therein). 
Here, we obtain $(\hat{\alpha},\hat{\mu})=(0.0613, 0.7020)$ whence, once the forest stand has become old, we would expect $\hat{\alpha}\nu(W)/\hat{\mu} \approx 27$ trees in $W$.
\begin{table}[htbp]
	\centering
	\begin{tabular}{lccc}
\hline
			&$\hat{\lambda}$	& $\hat{K}$	&$\hat{\sigma}$\\
\hline
GI			&	0.078		&	0.095	&	--		\\
SG			&	0.371		&	0.073	&	0.151	\\
Stationary SG	&	1.269		&	0.062	&	0.218	\\
		\hline
\end{tabular}
\caption{Results obtained after fitting the stationary mark SG-process and the GI-process (results from \cite{Cronie}) to a data set of Swedish Scots pines. }
	\label{ResultsPines}
\end{table}

It comes as no surprise that $\hat{K}$ is larger in the GI-process than in the SG-process. 
This follows since in the GI-process, the estimation of the open growth ($\lambda$ and $K$ in the logistic growth function) takes into account also that the observed sizes $m_{ik}$ are results of an open growth which has been inhibited by spatial interaction, i.e.\ $f(\cdot)$ is inhibited by $h(\cdot)$. 
Since $\max_{i,k}m_{ik}=0.0860$ (see \cite{Cronie}) it is probable that the SG-process underestimates $K$ a bit. 
Moreover, by comparing the results for the stationary and the non-stationary SG-process, we conclude that an increased $\hat{\sigma}$ (larger fluctuations) for the stationary case also results in a stronger estimated mean reversion (increased $\hat{\lambda}$).

From the differences in $\hat{\lambda}$ and $\hat{\sigma}$ for the two SG-processes 
we have indications that the data set has not (yet) reached stationarity, which is to be expected since the forest stand we are considering is quite young. 

In conclusion, mainly due to the difference in $\hat{K}$ between the SG-process and the GI-process as well as the sensibility of having stochastic marks in the GI-process, 
this pilot study certainly motivates a further investigation of the applicability of the full SGI-process, where we include an interaction function $h(\cdot)$ in the drift term of each diffusion $M_i(t)$, $t\in[B_i,D_i)$, i.e.\ where we add a stochastic integral term $\int_{B_i}^{D_i}\sigma(M_i(t))dW_i(t)$ to expression (\ref{GI}).

\section{Discussion}\label{SectionDiscussion}

We have here considered the GI-process only in the context of the CIR-mark process, but we may just as well employ any other positive diffusion for the growth of the marks.
As previously noted, the linear growth function, which is the drift function in the CIR-process, is a special case of the Von Bertalanffy-Chapman-Richards (VBCR) growth function (see e.g.\ \cite{RenshawComasMateu}). 
Another special case of the VBCR growth function is the 
aforementioned logistic growth function $f(Y_{i}(t);\theta) = \lambda Y_{i}(t)\left(1-Y_{i}(t)/K\right)$ which has been used in the GI-process in e.g.\ \cite{Cronie,RS1,RS2}.

A further modification which may be made is to change the diffusion term $\sigma(Y_{i}(t);\theta) = \sigma\sqrt{Y_{i}(t)}$ into any other diffusion term which keeps $Y_{i}(t)$ positive, e.g.\ $\sigma(Y_{i}(t);\theta) = \sigma Y_{i}(t)^{\gamma}$, $\gamma>0$, which is the diffusion coefficient found in the CKLS-model (see e.g.\ \cite{CKLS}).
Note that when applying these changes, we would typically not have known closed form expressions for the transition densities, $p_{Y_i}(t,y_1|y_0;\theta)$.
The transition densities are know only for a few special cases, including the CIR-process. Therefore, we have to use different approximated/pseudo likelihood methods for the estimation of the parameters (see \cite{Iacus} for a good general overview).

Our final goal is to ML-estimate all parameters of the full SGI-process, i.e.\ to include also the spatial interaction function $h(\cdot)$ in expression (\ref{GI}).
Here the lack of closed form expressions for the transition densities remains and, just as for the previous adjustments suggested, the estimation requires that we employ approximated/pseudo likelihood methods.
For instance, \cite{Ait} suggests an approach where the transition densities of multivariate diffusions may be approximated by series expansions based on hermite polynomials.
Note further that within this setting, in order to reduce edge effects (absence of individuals outside the boundary of $W$), it would be sensible to choose $W$ to be a torus.

Also, thus far we have introduced the type of death which occurs when $t\geq D_i$, i.e.\ the life-time of the individual has expired.
Following the terminology of \cite{RS2}, we can refer to this type of death as \emph{natural death}.
It is possible, however, to introduce another type of death, namely so called \emph{competitive death} (or \emph{interactive death}), and its introduction entails a slightly different formulation of the diffusions $M_{i}(t)$, $i=1,\ldots,N$.
By defining the \emph{death-time} of individual $i$ to be (the stopping-time) $\zeta_i = \inf\{t>B_i:M_{i}(t) = 0\}\wedge D_i$, we have that
if $M_i$ reaches the absorbing state $M_{i}(t)=0$ for some $t\in(B_i,D_i)$ it stays 0 and we say that it has suffered a competitive death.
Furthermore, if it does not die from competition in $(B_i,D_i)$ it will still die at time $D_i$, i.e.\ at its natural death time.
As soon as $t>\zeta_i$ the interaction between $M_{i}(t)$ and the other marks will terminate, hence we remove individual $i$ from consideration.

\section*{Acknowledgements}
The author would like to thank Peter Guttorp (University of Washington), Aila S\"arkk\"a (Chalmers university of technology) and Jun Yu (Swedish university of agricultural sciences) for useful suggestions and discussions.

\appendix
\section{Appendix: Proofs}

\subsection{Proof of Proposition \ref{fidi}}
The proof of Proposition \ref{fidi} exploits the Markov property of $\Phi_M(t)$.

\begin{proof}[Proof of Proposition \ref{fidi}]
We first note that, by construction, when evaluated at $\mathbf{M}$, we may express the the joint density of $(\Phi_M(T_1),\ldots,\Phi_M(T_n))^T$ through its two building blocks -- the CIR-process and the underlying process $\Phi(t)$.
More specifically, we have that
\beann
\mathbf{p}_{T_1,\ldots,T_n}(\mathbf{M};\theta) &=& \mathbf{p}_{M(T_1,\ldots,T_n)|\Phi}(\mathbf{M};\lambda,K,\sigma) p_{\Phi}(\fomega,\{x_i\}_{i=1}^{d};\alpha,\mu),
\eeann
where $p_{\Phi}(\fomega,\{x_i\}_{i=1}^{d};\alpha,\mu)$ is the density of $(\Phi(T_1),\ldots,\Phi(T_n))^T$, evaluated at the index sets $(\Omega(T_1),\ldots,\Omega(T_n))^T = \fomega = (\omega_1,\ldots,\omega_n)^T$, $\omega_k = \{i:m_{ik} > 0\}$, and the locations $(X_i)_{i=1}^{d} = (x_i)_{i=1}^{d}$.
We note that the total number of individuals under consideration hereby is given by $d = \left|\bigcup_{k=1}^{n}\omega_k\right|$.
Furthermore, the density $\mathbf{p}_{M(T_1,\ldots,T_n)|\Phi}(\mathbf{M};\lambda,K,\sigma)$ is the conditional density of the diffusions, given $(\Omega(T_1),\ldots,\Omega(T_n))^T = \fomega$, and we note that, probabilistically, this is a statement only about the diffusions $Y_1(t),\ldots,Y_d(t)$.

We start by considering the part concerning the underlying process' behaviour at $T_1,\ldots,T_n$.
The density $p_{\Phi}(\fomega,\{x_i\}_{i=1}^{d};\alpha,\mu)$ can be further rewritten as the product
\beann
p_{\Phi}(\fomega,\{x_i\}_{i=1}^{d};\alpha,\mu)
= p_{\Phi|X}(\fomega;\alpha,\mu) \times p_X(\{x_i\}_{i=1}^{d};\nu(W)),
\eeann
where $p_{\Phi|X}(\fomega;\alpha,\mu)$ is the conditional density of $(\Omega(T_1),\ldots,\Omega(T_n))^T$, given $(X_i)_{i=1}^{d} = (x_i)_{i=1}^{d}$, and $p_X(\{x_i\}_{i=1}^{d};\nu(W))$ is the density of the locations $(X_i)_{i=1}^{d}$.
We thus conclude that
$p_{\Phi|X}(\fomega;\alpha,\mu)$ is a statement only about which intervals $[B_i,D_i)$ that cover $T_1,\ldots,T_n$.
Now, by letting $d_1 = |\omega_1|,\ldots,d_n = |\omega_n|$ and recalling the ID-process $N(t) = |\Omega(t)|$, when additionally conditioning $p_{\Phi|X}(\fomega;\alpha,\mu)$ on $(N(T_k))_{k=1}^{n} = (d_k)_{k=1}^{n}$, we obtain
\bea
\label{JointMarkedImmigrationDeath}
p_{\Phi}(\fomega,\{x_i\}_{i=1}^{d};\alpha,\mu)	
&=&	p_{\Phi|X}(\fomega;\alpha,\mu) p_X(\{x_i\}_{i=1}^{d};\nu(W)) \\
&=&	p_{\Phi|X,N}((\omega_k)_{k=1}^{n})\; p_{N}((d_k)_{k=1}^{n};\alpha,\mu)\nn\\
&&\times p_X(\{x_i\}_{i=1}^{d};\nu(W)),\nn
\eea
where $p_{N}((d_k)_{k=1}^{n};\alpha,\mu)$ is the density of $(N(T_k))_{k=1}^{n}$, evaluated at $(d_k)_{k=1}^{n}$, and $p_{\Phi|X,N}((\omega_k)_{k=1}^{n})$ is a statement about the order of appearance of the individuals in the index sets.

Starting with the last of the components of expression (\ref{JointMarkedImmigrationDeath}),
we clearly see that $p_X(\{x_i\}_{i=1}^{d};\nu(W)) = \nu(W)^{-d}$, since the $X_i$'s are independent and uniformly distributed over $W$.
Moreover, from the Markov property of $N(t)$ we have that $p_{N}((d_k)_{k=1}^{n};\alpha,\mu)$ may be written as a product of its transition densities, i.e.\
\beann
p_{N}((d_k)_{k=1}^{n};\alpha,\mu) = \prod_{k=1}^{n} p_{N}\left(\Delta T_{k},d_k \Big| d_{k-1};\alpha\nu(W),\mu\right),
\eeann
where $\Delta T_{k} = T_{k}-T_{k-1}$ and $p_{N}(t,y|x;\alpha\nu(W),\mu)$ is given by Lemma \ref{ImDeTransProb}.
Regarding the first of the parts constituting expression (\ref{JointMarkedImmigrationDeath}), it is given by
$p_{\Phi|X,N}((\omega_k)_{k=1}^{n}) = 1/|\mathcal{P}|$,
where
\beann
\mathcal{P} =
\Bigg\{
\fomega &:&
\omega_1,\ldots,\omega_n\subseteq\{1,\ldots,d\} = \bigcup_{k=1}^{n}\omega_k
;\ |\omega_1| = d_1,\ldots,|\omega_n|=d_n; \\
&&i\in\omega_{k-1}, i\notin\omega_{k} \Rightarrow i\notin\omega_{k+1} \text{ for any }k=2,\dots,n-1
\Bigg\}.
\eeann
This can be seen by considering the matrix $A = [a_{i,k}]_{i=1,\ldots,d;\ k=1,\ldots,n}$, which has entries $a_{i,k} = \1\{T_k\in[B_i,D_i)\}$.
When we condition on $(N(T_k))_{k=1}^{n} = (d_k)_{k=1}^{n}$ we only specify that the column-sums of $A$ are given by $d_1,\ldots,d_n$, whence we still have to determine what the probability is of $A$ being observed as the matrix $\bar{A}$ with entries $\bar{a}_{i,k} = \1\{m_{ik}>0\}$.
It may be seen that the sample space of the conditional random matrix $A|\{(N(T_k))_{k=1}^{n} = (d_k)_{k=1}^{n}\}$ is given by the $d\times n$-matrices which have 0-1 entries, column-sums $d_1,\ldots,d_n$ and all 1's in each row connected (this follows since individuals cannot start living again once they have died); when $n=5$, say, a row can be given by e.g.\ $(0,1,1,0,0)$ or $(1,1,1,1,0)$.
To obtain a bound for $|\mathcal{P}|$ we find that the number of matrices which have rows with connected 1's is given by $d\sum_{j=1}^{n}(n-(j-1)) = dn(n+1)/2$, whereby $2/dn(n+1)\leq1/|\mathcal{P}|\leq1$.

Hence, we may summarise expression (\ref{JointMarkedImmigrationDeath}) as
\beann
p_{\Phi}(\fomega,\{x_i\}_{i=1}^{d};\alpha,\mu) &=& C \prod_{k=1}^{n} p_{N}\left(\Delta T_{k},d_k \Big| d_{k-1};\alpha\nu(W),\mu\right),
\eeann
where the constant
\bea
\label{NormalizingConstant}
C = \nu(W)^{-d} |\mathcal{P}|^{-1}
\eea
depends on $\nu(W)$ and $\mathbf{M}$ (in particular $d$, $d_1,\ldots,d_n$).

We now turn to the part of the density which is related to the mark processes.
Due to the independence of the marks, the Markovianity of the CIR-process and the uniformly distributed birth times, we have that
\begin{align}
\label{MarkJointDensity}
&\mathbf{p}_{M(T_1,\ldots,T_n)|\Phi}(\mathbf{M};\lambda,K,\sigma)
=	\prod_{k=1}^{n} \prod_{i\in\omega_{k-1}\cap\omega_{k}} p_{Y_{i}}(\Delta T_{k}, m_{ik}|m_{i(k-1)};\lambda,K,\sigma)\nn\\
&\times \prod_{i\in\bigcup_{k=1}^{n}\omega_k} \int_{T_{k_{i}-1}}^{T_{k_{i}}} \frac{p_{Y_{1}}(T_{k_{i}}-t, m_{i(k_{i}-1)}|M_0;\lambda,K,\sigma)}{T_{k_{i}}-T_{k_{i}-1}} dt,
\end{align}
where $p_{Y_{i}}(t-s, y_{t}|y_{s};\lambda,K,\sigma)$ is given by expression (\ref{TransitionCIR}) and $k_{i}=\min\{k:i\in\omega_k\}$.
The first part of the above expression includes all considered transition time pairs $(T_{k-1},T_{k})$ whereas the second part includes the transitions between the (unobserved) arrival time $B_i\in(T_{k_i-1},T_{k_i})$ and the first sample time $T_{k_i}$ at which the individual is observed.

Although there is no information available regarding the exact death times and death sizes, it could be argued that the (unobserved) death sizes have been ignored in expression (\ref{MarkJointDensity}).
Letting $\bar{k}_{i}=\max\{k:i\in\omega_k\}$ be the index of the last sample time point at which the individual was alive and $T_{\bar{k}_{i}+1} = \infty$ if $\bar{k}_{i}=n$, we obtain
\beann
\int_{T_{\bar{k}_{i}}}^{T_{\bar{k}_{i}+1}} \int_{0}^{\infty}
\frac{p_{Y_{1}}(t - T_{\bar{k}_{i}}, x|m_{i\bar{k}_{i}};\lambda,K,\sigma)}{T_{\bar{k}_{i}+1} - T_{\bar{k}_{i}}} dx\,dt
=
\int_{T_{\bar{k}_{i}}}^{T_{\bar{k}_{i}+1}}
\frac{1}{T_{\bar{k}_{i}+1} - T_{\bar{k}_{i}}} dt = 1
\eeann
as (possible) contribution to expression (\ref{MarkJointDensity}).
Hence, the death sizes may be neglected.

\end{proof}

\subsection{Proofs of Theorem \ref{ThmCons} and Theorem \ref{ThmAsNorm}}

The consistency is shown by using the classical Wald approach and the asymptotic normality proof follows the approach of Cram\'er.

Before turning to the proofs, we first note some (well known) results used in the proofs of the consistency and the asymptotic normality of the sequence of ML-estimators $\hat{\theta}_n$.

The following lemma, which can be found in \cite{Ferguson}, will be used in both the consistency proof and the proof of the asymptotic normality.
\begin{lem}[Uniform Strong Law of Large Numbers]\label{USLLN}
Given that $Z_1,Z_2,\ldots$ are iid copies of the random variable $Z$, assume that:
\begin{itemize}
\item[(i)] $\Theta$ is compact,
\item[(ii)] $U(x,\theta)$ is upper semi-continuous in $\theta$ for all $x$ and there exists a function $K(x)$ such that $\E[K(X)]<\infty$ and $U(x,\theta)\leq K(x)$ for all $x$ and $\theta$,
\item[(iii)] for all $\theta$ and for all sufficiently small $\rho>0$, $\sup_{\theta:d(\theta',\theta)<\rho} U(x,\theta')$ is measurable in $x$.
\end{itemize}
Then
\beann
\P\left(
\limsup_{n\rightarrow\infty} \sup_{\theta\in\Theta} \frac{1}{n}\sum_{j=1}^{n} U(Z_j,\theta) \leq \sup_{\theta\in\Theta}\mu(\theta)
\right)=1,
\eeann
where $\mu(\theta)=\E[U(Z,\theta)]$.
If we replace (ii) and (iii) by
\begin{itemize}
\item[(ii)'] $U(x,\theta)$ is continuous in $\theta$ for all $x$ and there exists a function $K(x)$ such that $\E[K(X)]<\infty$ and $|U(x,\theta)|\leq K(x)$ for all $x$ and $\theta$,
\end{itemize}
we obtain instead
\beann
\P\left(
\lim_{n\rightarrow\infty} \sup_{\theta\in\Theta} \left|\frac{1}{n}\sum_{j=1}^{n} U(Z_j,\theta) - \mu(\theta)\right| = 0\right) = 1.
\eeann
\end{lem}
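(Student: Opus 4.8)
The plan is to prove the result by the classical localization-plus-finite-cover argument (following the development in \cite{Ferguson}) that reduces the uniform statement to finitely many applications of the ordinary strong law of large numbers (SLLN). For $\theta\in\Theta$ and $\rho>0$ I would introduce the local supremum
\[
U_\rho(x,\theta) = \sup_{\theta':\,d(\theta',\theta)<\rho} U(x,\theta'),
\]
which is measurable in $x$ by hypothesis (iii) and bounded above by $K(x)$ by hypothesis (ii). The guiding heuristic is that over a small ball $U_\rho(x,\theta)$ is barely worse than $U(x,\theta)$ itself, so that controlling finitely many averages $n^{-1}\sum_j U_{\rho}(Z_j,\theta_i)$ suffices to control the supremum over all of the compact set $\Theta$.

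For the first (upper semi-continuous) assertion I would first apply the one-sided SLLN to the i.i.d.\ sequence $U_\rho(Z_1,\theta),U_\rho(Z_2,\theta),\ldots$: since these are bounded above by the integrable envelope $K$, the average converges almost surely to $\mu_\rho(\theta):=\E[U_\rho(Z,\theta)]\in[-\infty,\infty)$. Next, as $\rho\downarrow0$ the functions $U_\rho(x,\theta)$ decrease, and upper semi-continuity forces their pointwise limit to equal exactly $U(x,\theta)$; by monotone convergence (the common dominating $K$ controlling the positive parts) it follows that $\mu_\rho(\theta)\downarrow\mu(\theta)$. Fixing $\epsilon>0$, for each $\theta$ I can then choose $\rho(\theta)>0$ with $\mu_{\rho(\theta)}(\theta)\le\sup_{\theta}\mu(\theta)+\epsilon$ (trivially if $\mu(\theta)=-\infty$). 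The open balls $\{\theta':d(\theta',\theta)<\rho(\theta)\}$ cover $\Theta$, so by compactness I extract a finite subcover centred at $\theta_1,\ldots,\theta_m$ with radii $\rho_1,\ldots,\rho_m$. Since any $\theta$ in the $i$-th ball satisfies $U(x,\theta)\le U_{\rho_i}(x,\theta_i)$, I obtain
\[
\sup_{\theta\in\Theta}\frac{1}{n}\sum_{j=1}^n U(Z_j,\theta)\le\max_{1\le i\le m}\frac{1}{n}\sum_{j=1}^n U_{\rho_i}(Z_j,\theta_i).
\]
Taking $\limsup_n$ and invoking the SLLN simultaneously at the finitely many centres gives the almost sure bound $\max_i\mu_{\rho_i}(\theta_i)\le\sup_\theta\mu(\theta)+\epsilon$, and letting $\epsilon\downarrow0$ along a countable sequence yields the claimed inequality on a single almost sure event.

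For the second assertion, continuity of $U(x,\cdot)$ makes both $U$ and $-U$ upper semi-continuous, the two-sided bound $|U|\le K$ dominates them both by the integrable $K$, and separability of $\Theta$ realises the local suprema and infima as suprema over a countable dense set, securing the required measurability. I would therefore also introduce the local infimum $U^-_\rho(x,\theta)=\inf_{d(\theta',\theta)<\rho}U(x,\theta')$, noting that $U^-_\rho\uparrow U$ and $U_\rho\downarrow U$ as $\rho\downarrow0$, so that by dominated convergence both $\E[U^-_\rho(Z,\theta)]$ and $\mu_\rho(\theta)$ tend to $\mu(\theta)$ (in particular $\mu$ is continuous). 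Re-running the finite-cover argument, but now sandwiching $U(x,\theta')$ between $U^-_{\rho_i}(x,\theta_i)$ and $U_{\rho_i}(x,\theta_i)$ for $\theta'$ in the $i$-th ball and shrinking the radii so that $\mu$ oscillates by at most $\epsilon$ on each ball, controls $n^{-1}\sum_j U(Z_j,\theta)-\mu(\theta)$ from both sides uniformly in $\theta$; letting $\epsilon\downarrow0$ delivers the uniform almost sure convergence.

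The main obstacle, and the point demanding care, is the interchange of the supremum over the \emph{uncountable} index set $\Theta$ with the almost sure convergence: this is exactly what the localization and the finite subcover are designed to resolve, since the SLLN is invoked only at the finitely many centres $\theta_1,\ldots,\theta_m$ for each fixed $\epsilon$, so only finitely many null sets are discarded, and the passage $\epsilon\downarrow0$ is arranged along a countable sequence so the conclusion holds on one almost sure event. A secondary subtlety is that in the first part $\mu(\theta)$ may equal $-\infty$, which is why I must use the one-sided SLLN (valid for sequences bounded above by an integrable function) together with monotone rather than dominated convergence to identify $\lim_{\rho\downarrow0}\mu_\rho(\theta)$.
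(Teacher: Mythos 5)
The paper states this lemma without proof, citing \cite{Ferguson}; your argument is precisely the standard Wald--Ferguson proof given there --- localize with $U_\rho(x,\theta)=\sup_{d(\theta',\theta)<\rho}U(x,\theta')$, identify $\lim_{\rho\downarrow0}\E[U_\rho(Z,\theta)]=\mu(\theta)$ by monotone convergence under the integrable envelope $K$, extract a finite subcover of the compact $\Theta$, and invoke the (one-sided) SLLN only at the finitely many centres, with the two-sided sandwich via local infima handling the continuous case --- and it is correct. The only corner you gloss is the case $\sup_{\theta\in\Theta}\mu(\theta)=-\infty$, where your choice of $\rho(\theta)$ with $\mu_{\rho(\theta)}(\theta)\leq\sup_{\theta}\mu(\theta)+\epsilon$ is vacuous since each $\mu_\rho(\theta)$ may be finite; this is repaired by the routine truncation of proving $\limsup_n\sup_\theta n^{-1}\sum_j U(Z_j,\theta)\leq\max\left\{\sup_\theta\mu(\theta),-M\right\}+\epsilon$ for each $M$ and letting $M\rightarrow\infty$ along a countable sequence.
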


A further convergence lemma, Slutsky's lemma, which can be found in e.g.\ \cite{Ferguson}, is used both in the consistency proof and in the proof of the asymptotic normality.
It combines converging stochastic sequences.

The Lindeberg-Feller central limit theorem (see e.g.\ \cite{VanDerVaart}), which we will exploit in the proof of Theorem \ref{ThmAsNorm}, gives us the asymptotic normality of sums of independent random vectors which are not necessarily identically distributed.

We here (partly) will consider a stronger version of the Lindeberg-Feller central limit theorem, which is given by a multivariate form of the Lyapunov central limit theorem, which can be found in e.g.\ \cite{Hoadley} (it is stronger in the sense that if the Lyapunov condition holds then the Lindeberg condition is satisfied (see e.g.\ \cite{Gut})).

\begin{proof}[Proof of Theorem \ref{ThmCons}]
From \cite{CronieYu} we already have that the ML-estimator of the ID-process is strongly consistent, i.e.\
$(\hat{\alpha}_n,\hat{\mu}_n) \stackrel{a.s.}{\longrightarrow} (\alpha_0,\mu_0)$, as $n\rightarrow\infty$.
Hence, if we manage to show that $(\hat{\lambda}_n,\hat{K}_n,\hat{\sigma}_n) \stackrel{a.s.}{\longrightarrow} (\lambda_0,K_0,\sigma_0)$, as $n\rightarrow\infty$, then Slutsky's lemma (see e.g.\ \cite{Ferguson}) gives us that $\hat{\theta}_n \stackrel{a.s.}{\longrightarrow} \theta_0$.

To simplify the notation we write $\Theta$ for $\Theta_{\lambda}\times\Theta_{K}\times\Theta_{\sigma}$ so that $\theta = (\lambda,K,\sigma)\in\Theta$, and it now remains to show that $\tilde{\theta}_{n}=(\hat{\lambda}_{n},\hat{K}_{n},\hat{\sigma}_{n}) \stackrel{a.s.}{\longrightarrow} (\lambda_0,K_0,\sigma_0) = \theta_0$, as $n\rightarrow\infty$.
The idea of the proof is to show that, for any $\delta>0$, if we assume that $\tilde{\theta}_n\in \{\theta\in\Theta:d(\theta,\theta_0)\geq\delta\}$ we get a contradiction.

By suppressing all conditioning in the iid random variables $M_{i}(T_k)|M_{i}(T_{k-1})$ and relabeling the $A(n)=\sum_{k=1}^{n}|\Omega(k\Delta)|$ observations of the $M_i(T_k)$'s up to time $T_n$ as $Z_1,\ldots,Z_{A(n)}$, we may write the log likelihood as
\bea
\label{LogLikRelabled}
l_{1,n}(\theta)	&=& \sum_{i=1}^{A(n)} l_{i}(\theta),\\
l_{i}(\theta)
&=&\log\pi(Z_{i};\theta)
= \log\left\{\frac{Z_{i}^{2\lambda/\sigma^2-1} \e^{-Z_{i}(2\lambda/\sigma^2 K)}}{\left(\sigma^2K/2\lambda\right)^{2\lambda/\sigma^2}\Gamma(2\lambda/\sigma^2)}\right\}\nn\\
&=& (2\lambda/\sigma^2-1)\log Z_{i} - Z_{i}(2\lambda/\sigma^2 K) - \log\Gamma(2\lambda/\sigma^2)\nn\\
&&-(2\lambda/\sigma^2)\left(2\log\{\sigma\} + \log\{K\} - \log\{2\} - \log\{\lambda\}\right).\nn
\eea
We note further that $A(n)$ is non-decreasing in $n$.
Since the total number of individuals is given by $N\sim Poi(\alpha n\Delta)$, we have that $N\stackrel{a.s.}{\longrightarrow} \infty$, as $n\rightarrow\infty$, and by the strong law of large numbers we have that $\frac{1}{N}\sum_{i=1}^{N}\1\{L_i>\Delta\} \stackrel{a.s.}{\longrightarrow} \E[\1\{L_1>\Delta\}] = \e^{-\Delta\mu}>0$, as $N\rightarrow\infty$, for any $(\alpha,\mu)\in\Theta_{\alpha}\times\Theta_{\mu}$.
Hence, as $n\rightarrow\infty$, we will observe the stationary diffusions $Y_i(t)$ an infinite number of times at our sampling times $T_1,\ldots,T_n$, whence $A(n) \stackrel{a.s.}{\longrightarrow} \infty$.
Note that if we for instance choose to include only one observation of each diffusion in (\ref{LogLikRelabled}) (say the last one), i.e.\ $A(n)=\left|\bigcup_{k=1}^{n}\Omega(k\Delta)\right|$, the convergence would still hold.

Treated as a function of $\theta$, we note that the MLE $\tilde{\theta}_n$ also maximises $\frac{1}{A(n)}l_{1,n}(\theta) - \frac{1}{A(n)}l_{1,n}(\theta_0) = \frac{1}{A(n)}\sum_{i=1}^{A(n)} (l_{i}(\theta)-l_{i}(\theta_0))$.
Consider now the function
\bea
\label{contraction}
U(x,\theta)	&:=& \log\pi(x;\theta) - \log\pi(x;\theta_0)\\
						&=& 2\left(\frac{\lambda}{\sigma^2} - \frac{\lambda_0}{\sigma_0^2}\right)\log(x)
						- 2\left(\frac{\lambda}{\sigma^2 K} - \frac{\lambda_0}{\sigma_0^2 K_0}\right)x
						+ \log\frac{\Gamma\left(\frac{2\lambda_0}{\sigma_0^2}\right)}{\Gamma\left(\frac{2\lambda}{\sigma^2}\right)}\nn\\
						&&+\;(2\lambda_0/\sigma_0^2)\left(2\log\{\sigma_0\} + \log\{K_0\} - \log\{2\} - \log\{\lambda_0\}\right)\nn\\
						&&-\;(2\lambda/\sigma^2)\left(2\log\{\sigma\} + \log\{K\} - \log\{2\} - \log\{\lambda\}\right),\nn
\eea
which clearly is continuous in both arguments, and thereby a measurable function of $x$.

\underline{Identifiability}: The only way in which expression (\ref{contraction}) can be set equal to $0$ is to require that $\theta=\theta_0$.
Hence, $\{\pi(\cdot;\theta):\theta\in\Theta\}$ is an identifiable family of distributions, and this in turn guarantees that the ML-estimator converges to the unique maximum $\theta_0$ (the uniqueness follows from e.g.\ Lemma 5.35, p. 62, in \cite{VanDerVaart}).

\underline{Conditions of Lemma \ref{USLLN}}:
First we note that the continuity of $U(x,\theta)$ implies that it is upper semi-continuous in $\theta$ for all $x$.
Now, in order to find the required bound $K(x)$, we note that
\begin{align*}
&U(x,\theta)
\leq 2\sup_{\theta\in\Theta}|\log\pi(x;\theta)|\\
&\leq 2\left(\frac{2\bar{\lambda}}{\underline{\sigma}^2} + 1\right)|\log(x)| + \frac{4\bar{\lambda}}{\underline{\sigma}^2\underline{K}}x
+ 2\log\left\lceil2\bar{\lambda}/\underline{\sigma}^2\right\rceil!\\
&+\frac{4\bar{\lambda}}{\underline{\sigma}^2} \left(\sup_{\lambda\in\theta_{\lambda}}|\log\{\lambda\}| + \sup_{K\in\theta_{K}}|\log\{K\}| + 2\sup_{\sigma\in\theta_{\sigma}}|\log\{\sigma\}| + \log\{2\}\right)\\
&=:K(x),
\end{align*}
where e.g.\ $\bar{\lambda}=\sup\{\Theta_{\lambda}\}$, $\underline{\sigma}=\inf\{\Theta_{\sigma}\}$ and $\left\lceil x \right\rceil = \inf\{y\in\N:x\leq y\}$ is the ceiling function.
Because of the boundedness (compactness) of $\Theta$ we have that $\E_{\theta_0}[|K(Z_i)|]<\infty$ since also $\E_{\theta_0}[Z_i]=K_0$ and (by Jensen's inequality and Appendix \ref{SectionAppendixDerivatives})
\beann
\E_{\theta_0}[|\log(Z_i)|] &=& \E_{\theta_0}[\sqrt{\log(Z_i)^2}]
\leq \sqrt{\E_{\theta_0}[\log(Z_i)^2]}\\
&=& \Bigg(
\log\left(\frac{2\lambda_0}{K_0\sigma_0^2}\right)^2 - 2\log\left(\frac{2\lambda_0}{K_0\sigma_0^2}\right)\psi\left(\frac{2\lambda_0}{\sigma_0^2}\right)\\
&&+ \psi\left(\frac{2\lambda_0}{\sigma_0^2}\right)^2 + \psi'\left(\frac{2\lambda_0}{\sigma_0^2}\right)
\Bigg)^{1/2} < \infty,
\eeann
where $\psi(x)=\Gamma'(x)/\Gamma(x)$.

By the measurability of $U(x,\theta)$, we now may define the continuous function $\mu(\theta)$, which by \cite{Mathiassen} is given by
\beann
\mu(\theta) &=& \E_{\theta_0}[U(Z_i,\theta)]
						= -D(\pi_{\theta_0}||\pi_{\theta})\\
					&=& \psi\left(\frac{2\lambda_0}{\sigma_0^2}\right)\left(\frac{2\lambda_0}{\sigma_0^2} - \frac{2\lambda}{\sigma^2}\right)
					- \frac{2\lambda_0}{\sigma_0^2}
					+ \log\frac{\Gamma(2\lambda/\sigma^2)}{\Gamma(2\lambda_0/\sigma_0^2)}\\
					&&+ \frac{2\lambda}{\sigma^2}\log\left(\frac{\sigma^2 K \lambda_0}{\sigma_0^2 K_0 \lambda}\right)
					+ \frac{2\lambda K_0}{\sigma^2 K},
\eeann
where $D(\pi_{\theta_0}||\pi_{\theta})$ is the Kullback-Leibler divergence between the the two distributions involved.

From the Shannon-Kolmogorov inequality we have that $\mu(\theta) = -D(\pi_{\theta_0}||\pi_{\theta})\leq0$, with $\mu(\theta) = -D(\pi_{\theta_0}||\pi_{\theta})=0$ iff $\theta=\theta_0$.
Hence, by letting $\delta>0$ and defining the compact set $C = \{\theta\in\Theta:d(\theta,\theta_0)\geq\delta\}$, we note that by its continuity, $\mu(\theta)$ attains its (negative) maximum on $C$, i.e.\ $\bar{\mu} = \sup_{\theta\in C} \mu(\theta)<0$.

Since suprema of measurable functions are measurable we get that, for any $S\subseteq\Theta$, the function $\sup_{\theta\in S} U(x;\theta)$ is measurable in $x$.

\underline{Argument:}
Now, by Lemma \ref{USLLN} we have that
\beann
\P\left(\limsup_{n\rightarrow\infty} \sup_{\theta\in C} \frac{1}{A(n)}\sum_{j=1}^{A(n)} U(Z_j,\theta) \leq \bar{\mu}\right)=1,
\eeann
so that there a.s.\ exists an $n^*\in\N$ such that for all $n>n^*$,
\beann
\sup_{\theta\in C} \frac{1}{A(n)}\sum_{j=1}^{A(n)} U(Z_j,\theta) \leq \bar{\mu}/2 < 0,
\eeann
say. But at the same time, since $\frac{1}{A(n)}\sum_{j=1}^{A(n)} U(Z_j,\theta_0)=0$, we must have that
\beann
\frac{1}{A(n)}\sum_{j=1}^{A(n)} U(Z_j,\tilde{\theta}_n) = \sup_{\theta\in\Theta} \frac{1}{A(n)}\sum_{j=1}^{A(n)} U(Z_j,\theta) \geq 0.
\eeann
Hence, for $n>n^*$, this implies that $\tilde{\theta}_n\notin C$ a.s., or equivalently that $d(\tilde{\theta}_n,\theta_0)<\delta$.
Since $\delta>0$ was arbitrarily chosen we get that $\tilde{\theta}_n \stackrel{a.s.}{\longrightarrow} \theta_0$ as $n\rightarrow\infty$.

Note that we always can find a measurable selection $\hat{\theta}(\x)$ such that
$l_n(\x;\hat{\theta}(\x)) = \sup_{\theta\in\Theta}l_n(\x;\theta)$, for all $\x$ (see e.g.\ \cite{Ferguson}), whence the measurability of $\hat{\theta}_n$ never is addressed.

\end{proof}

\begin{proof}[Proof of Theorem \ref{ThmAsNorm}]
Given $\hat{\theta}_{n}^* = (\hat{\lambda}_{n},\hat{K}_{n},\hat{\sigma}_{n},\hat{\alpha}_{n},\hat{\mu}_{n})^T$ and $\theta_0^* = (\lambda_0,K_0,\sigma_0,\alpha_0,\mu_0)^T$, we are here interested in the asymptotic distribution of $\sqrt{n}(\hat{\theta}_{n}^* - \theta_0^*)$. 
Since either $\lambda_0$ or $\sigma_0$ is known, we will be dealing with the asymptotic distributions of
\begin{itemize}
\item[(i)]
$
\sqrt{n}(\hat{\theta}_{n}-\theta_0) = \sqrt{n}\left(\hat{K}_{n}-K_0,\hat{\sigma}_{n}-\sigma_0,\hat{\alpha}_{n}-\alpha_0,\hat{\mu}_{n}-\mu_0\right)^T,
$
\item[(ii)]
$
\sqrt{n}(\hat{\theta}_{n}-\theta_0) = \sqrt{n}\left(\hat{\lambda}_{n}-\lambda_0,\hat{K}_{n}-K_0,\hat{\alpha}_{n}-\alpha_0,\hat{\mu}_{n}-\mu_0\right)^T.
$
\end{itemize}
Unless necessary, we will not distinguish in the notation between the two scenarios above, and in what follows we will prove that, when $n\rightarrow\infty$, the random vector $\sqrt{n}(\hat{\theta}_{n}-\theta_0)$ will asymptotically have a Gaussian distribution.

From the consistency proof we recall $Z_i = M_{i}(T_k)|M_{i}(T_{k-1})$, $N_k=|\Omega(k\Delta)|$, $A(n)=\sum_{k=1}^{n}N_k$ and the (parameter reduced) log-likelihood function
\beann
l_{n}(\theta)	&=& l_{n}\left(\theta;\Phi_{M}(\Delta),\ldots,\Phi_{M}(n\Delta)\right)\\
							&=& l_{1,n}\left(\theta;(Z_{i})_{i=1}^{A(n)}\right) + l_{2,n}\left(\theta;(N_{i})_{i=1}^{n}\right)\\
							&=& l_{1,n}(\theta) + l_{2,n}(\theta)
							= \sum_{k=1}^{n} l_{\Y}\left(\theta;\Z_{k}\right) + \sum_{k=1}^{n} l_{N}(\theta;N_k,N_{k-1}),
\eeann
where $\Z_{k} = (Z_{i})_{i\in\Omega(k\Delta)}$ and
\beann
l_{\Y}(\theta;\Z_{k})	&=& \sum_{i\in\Omega(k\Delta)} l_{Y}(\theta;Z_{i}),\\
l_{Y}(\theta;Z_{i})
&=&
\left\{
\begin{array}{ll}
l_{Y}(K,\sigma;Z_{i})	= \log\pi(Z_{i};\lambda_0,K,\sigma)		& \text{if }\lambda_0\text{ is known}\\
l_{Y}(\lambda,K;Z_{i}) = \log\pi(Z_{i};\lambda,K,\sigma_0)	& \text{if }\sigma_0\text{ is known},
\end{array}
\right.\\
l_{N}(\theta;N_k,N_{k-1}) &=& \log p_{N}\left(\Delta,N_k \big| N_{k-1};\alpha,\mu\right).
\eeann
The indexation of the $Z_i$'s will have two meanings, which will be clear from the notation:
We either deal with $(Z_i)_{i\in\Omega(k\Delta)}$, $k=1,\ldots,n$, or $(Z_1,\ldots,Z_{A(n)})$.
Note that although we simply write $l_{Y}(\theta;Z_{i})$, there is still a dependence of $\{\Omega(k\Delta)\}_{k=1}^{n}$ present (recall the construction of $\L(\theta)$).

We will here denote by $S_{\rho} = \{\theta\in\Theta:d(\theta,\theta_0)\leq\rho\}$ the closed neighbourhood of $\theta_0$ with radius $\rho>0$ and we note that the consistency holds also for this (restricted) compact parameter space.
Now, given the conditions under which $\hat{\theta}_n$ was proved strongly consistent, we get that $\hat{\theta}_{n}$ is a strongly consistent sequence of roots of the likelihood equation $\dot{l}_{n}(\theta) = \frac{\partial l_{n}(\theta)}{\partial\theta} = \mathbf{0}_{4\times 1}$, where $\mathbf{0}_{i\times j}$ denotes the $i\times j$ zero matrix, i.e.\ $\dot{l}_{n}(\hat{\theta}_n)=\mathbf{0}_{4\times 1}$ (see e.g.\ thm 18, p. 121, \cite{Ferguson}).

As we shall see, the vector $\dot{l}_{n}(\theta)$ and the $4\times4$ matrix $\ddot{l}_{n}(\theta) = \partial^2 l_{n}(\theta)/\partial\theta^2$ are well behaved enough to Taylor expand $\dot{l}_{n}(\theta)$ around $\theta_0$:
\beann
\dot{l}_{n}(\theta) &=& \dot{l}_{n}(\theta_0) + \int_{0}^{1} \ddot{l}_{n}(\theta_0 + x(\theta-\theta_0))dx\,(\theta-\theta_0),\\
\text{or}\nn\\
\sqrt{n}(\theta-\theta_0)
&=& -\Bigg(\frac{1}{n} \int_{0}^{1} \ddot{l}_{n}(\theta_0 + x(\theta-\theta_0))dx\Bigg)^{-1} \frac{1}{\sqrt{n}} (\dot{l}_{n}(\theta_0) - \dot{l}_{n}(\theta))\\
&=& -B_{n}(\theta)^{-1} \frac{1}{\sqrt{n}} (\dot{l}_{n}(\theta_0) - \dot{l}_{n}(\theta)).
\eeann
We wish to prove that, when evaluated at $\theta=\hat{\theta}_{n}$, the right hand side of the last row of the above expression converges in law to a zero mean multivariate normal distribution.
By managing to show that $-B_{n}(\hat{\theta}_{n})\stackrel{a.s.}{\longrightarrow} I(\theta_0)$, then eventually $-B_{n}^{-1}(\hat{\theta}_{n})$ will exist, given that the inverse of the (asymptotic) Fisher information $I(\theta_0)$ exists.
Then, once we have shown that $\dot{l}_{n}(\theta_0)/\sqrt{n} \stackrel{d}{\longrightarrow} \mathbf{G} \sim N\left(\mathbf{0},I(\theta_0)\right)$ as $n\rightarrow\infty$, by means of Slutsky's lemma (see e.g.\ \cite{Ferguson}) we may establish that
\bea
\label{Taylor}
\sqrt{n}(\hat{\theta}_{n} - \theta_0) &=& -B_{n}^{-1}(\hat{\theta}_{n}) \frac{1}{\sqrt{n}}\dot{l}_{n}(\theta_0) \\
&\stackrel{d}{\longrightarrow}& I(\theta_0)^{-1}\mathbf{G}\sim N\left(\mathbf{0},I(\theta_0)^{-1}\right).\nn
\eea

The $4\times1$ vector $\dot{l}_{n}(\theta)$ of first order partial derivatives found in expression (\ref{Taylor}) is given by
\beann
\dot{l}_{n}(\theta)			&=& \dot{l}_{1,n}(\theta) + \dot{l}_{2,n}(\theta)
													= \sum_{k=1}^{n} \dot{l}_{\Y}\left(\theta;\Z_{k}\right) + \sum_{k=1}^{n} \dot{l}_{N}(\theta;N_k,N_{k-1}) \\
												&=& \sum_{k=1}^{n} \sum_{i\in\Omega(k\Delta)} \dot{l}_{Y}\left(\theta;Z_{i}\right) + \sum_{k=1}^{n} \dot{l}_{N}(\theta;N_k,N_{k-1}),
\eeann
\beann
\dot{l}_{Y}(\theta;z)		
&=&
\left\{
\begin{array}{ll}
\left(\frac{\partial l_{Y}(\theta;z)}{\partial K}, \frac{\partial l_{Y}(\theta;z)}{\partial\sigma}, \mathbf{0}_{1\times2}\right)^T	&  \text{if }\lambda_0\text{ is known}\\
\left(\frac{\partial l_{Y}(\theta;z)}{\partial\lambda}, \frac{\partial l_{Y}(\theta;z)}{\partial K}, \mathbf{0}_{1\times2}\right)^T	&  \text{if }\sigma_0\text{ is known,}
\end{array}
\right.
\\
\dot{l}_{N}(\theta;y,x) &=& \left(\mathbf{0}_{1\times2}, \frac{\partial l_{N}(\theta;y,x)}{\partial\alpha}, \frac{\partial l_{N}(\theta;y,x)}{\partial\mu}\right)^T,
\eeann
where the elements of $\dot{l}_{Y}(\theta;z)$ can be found in expression (\ref{2orderPartialDeriavtives}) and those of $\dot{l}_{N}(\theta;y,x)$ are given in \cite{CronieYu}.
In the integral expression, $B_n(\theta)$, of (\ref{Taylor}) we also find the symmetric matrix of second order partial derivatives
\beann
\ddot{l}_{n}(\theta)			&=& \ddot{l}_{1,n}(\theta) + \ddot{l}_{2,n}(\theta)
													= \sum_{k=1}^{n} \ddot{l}_{\Y}\left(\theta;\Z_{k}\right) + \sum_{k=1}^{n} \ddot{l}_{N}(\theta;N_k,N_{k-1}) \\
												&=& \sum_{k=1}^{n} \sum_{i\in\Omega(k\Delta)} \ddot{l}_{Y}\left(\theta;Z_{i}\right) + \sum_{k=1}^{n} \ddot{l}_{N}(\theta;N_k,N_{k-1}),
\eeann
where
\beann
\ddot{l}_{N}(\theta;y,x)
&=&
\begin{pmatrix}
\mathbf{0}_{2\times2}& \mathbf{0}_{2\times1}																					 & \mathbf{0}_{2\times1}\\
\mathbf{0}_{1\times2}& \frac{\partial^2 l_{N}(y,x;\theta)}{\partial\alpha^2}					& \frac{\partial^2 l_{N}(y,x;\theta)}{\partial\alpha\partial\mu}\\
\mathbf{0}_{1\times2}& \frac{\partial^2 l_{N}(y,x;\theta)}{\partial\mu\partial\alpha} & \frac{\partial^2 l_{N}(y,x;\theta)}{\partial\mu^2}
\end{pmatrix}
\eeann
and
\bea
\label{2orderPartialDeriavtivesMatrix}
\ddot{l}_{Y}(\theta;z)
&=&
\1\{\lambda_0\text{ known}\}
\begin{pmatrix}
\frac{\partial^2 l_{Y}(\theta;z)}{\partial K^2}								& \frac{\partial^2 l_{Y}(\theta;z)}{\partial\sigma\partial K} 			& \mathbf{0}_{1\times2}	\\
\frac{\partial^2 l_{Y}(\theta;z)}{\partial\sigma\partial K}		& \frac{\partial^2 l_{Y}(\theta;z)}{\partial\sigma^2} 							&	\mathbf{0}_{1\times2}	\\
\mathbf{0}_{2\times1}																					&	 \mathbf{0}_{2\times1}																							&	 \mathbf{0}_{2\times2}
\end{pmatrix}
\\
&&+
\1\{\sigma_0\text{ known}\}
\begin{pmatrix}
\frac{\partial^2 l_{Y}(\theta;z)}{\partial\lambda^2}					& \frac{\partial^2 l_{Y}(\theta;z)}{\partial K\partial\lambda}	& \mathbf{0}_{1\times2}	\\
\frac{\partial^2 l_{Y}(\theta;z)}{\partial K\partial\lambda}	& \frac{\partial^2 l_{Y}(\theta;z)}{\partial K^2}								 & \mathbf{0}_{1\times2}	\\
\mathbf{0}_{2\times1}																					&	 \mathbf{0}_{2\times1}																					&	 \mathbf{0}_{2\times2}
\end{pmatrix}
,\nn
\eea
and the elements of $\ddot{l}_{Y}(\theta;z)$ and $\ddot{l}_{N}(\theta;y,x)$ are given, respectively, by expression (\ref{2orderPartialDeriavtives}) and \cite{CronieYu}.
By writing the parameter vector as $\theta = (\theta_1,\theta_2,\theta_3,\theta_4)$ and consulting expression (\ref{2orderPartialDeriavtives}), then for all $j,k=1,2$, we can find bounds such that, for constants $C_{j,1},C_{j,2},C_{j,3} < \infty$ and $C_{jk,1},C_{jk,2},C_{jk,3} < \infty$ which depend on $\Theta$ (or alternatively on $S_{\rho}$),
\beann
\left|\frac{\partial l_{Y}(\theta;z)}{\partial\theta_{j}}\right|
&\leq& \sup_{\theta}\left|\frac{\partial l_{Y}(\theta;z)}{\partial\theta_{j}}\right|\\
&\leq& K_j(z) = C_{j,1}\log(z) + C_{j,2}z + C_{j,3},\\
\left|\frac{\partial^2 l_{Y}(\theta;z)}{\partial\theta_{j}\theta_{k}}\right|
&\leq& \sup_{\theta}\left|\frac{\partial^2 l_{Y}(\theta;z)}{\partial\theta_{j}\theta_{k}}\right|\\
&\leq& K_{jk}(z) = C_{jk,1}\log(z) + C_{jk,2}z + C_{jk,3},
\eeann
$\E_{\theta}[K_j(Z_1)]<\infty$ and $\E_{\theta}[K_{jk}(Z_1)]<\infty$ (recall the finiteness of $\E_{\theta}[|\log(Z_i)|]$ from the consistency proof).

Note further that differentiation under the integral sign always is permitted (i.e.\ we may interchange differential operators and integrals/expected values) since the Gamma-distribution belongs to the exponential family.
Since all partial derivatives of $l_Y(\theta;z)$ above are continuous functions, it follows that both $\dot{l}_{Y}\left(\theta;Z_{i}\right)$ and $\ddot{l}_{Y}\left(\theta;Z_{i}\right)$ are measurable (i.e.\ random variables).

\underline{Convergence of $\dot{l}_{n}(\theta_0)/\sqrt{n}$}:
We now return to expression (\ref{Taylor}) and consider the weak convergence
\bea
\label{WeakConvergence}
\frac{1}{\sqrt{n}}\dot{l}_{n}(\theta_0) &=& \frac{1}{\sqrt{n}}\dot{l}_{1,n}(\theta_0) + \frac{1}{\sqrt{n}}\dot{l}_{2,n}(\theta_0)\\
&=& \sum_{k=1}^{n}\Bigg( \frac{1}{\sqrt{n}} \underbrace{\sum_{i\in\Omega(k\Delta)}\dot{l}_{Y}(\theta_0;Z_{i})}_{=\dot{l}_{\Y}\left(\theta;\Z_{k}\right)} +  \frac{1}{\sqrt{n}}\dot{l}_{N}(\theta_0;N_k,N_{k-1})\Bigg)\nn\\
&\stackrel{d}{\longrightarrow}&
\mathbf{G}
\sim N(\mathbf{0},I(\theta_0)) =
N\left(\mathbf{0}_{4\times1},
\begin{pmatrix}
I_Y(\theta_0)					&	\mathbf{0}_{2\times2}\\
\mathbf{0}_{2\times2}	&	I_N(\theta_0)
\end{pmatrix}
\right),\nn
\eea
as $n\rightarrow\infty$, which we will prove by means of Lindeberg-Feller CLT (see e.g.\ \cite{VanDerVaart}) as opposed to the usual central limit theorem (since the component of the sum in expression (\ref{WeakConvergence}) are not identically distributed).

We start by showing that the (asymptotic) mean of $\dot{l}_{n}(\theta_0)/\sqrt{n}$ is zero.
Since we may interchange derivatives and expectations in the case of $\pi(z;\theta)$, and since $\int\pi(z;\theta_0)dz = 1$, we have that
\beann
\E_{\theta_0}\left[\dot{l}_{Y}(\theta_0;Z_{i})\right]
&=& \int \left(\frac{\partial \pi(z;\theta_0)/\partial\theta_0}{\pi(z;\theta_0)}\right) \pi(z;\theta_0) dz\\
&=& \frac{\partial}{\partial\theta_0} \int\pi(z;\theta_0)dz = \mathbf{0},
\eeann
whereby $\E_{\theta_0}[\frac{1}{\sqrt{n}}\dot{l}_{1,n}(\theta_0)] = \mathbf{0}$.
That also $\E_{\theta_0}[\frac{1}{\sqrt{n}}\dot{l}_{2,n}(\theta_0)] = \mathbf{0}$ follows since from \cite{CronieYu} we have that $\E_{\theta_0}\big[\dot{l}_{N}(\theta_0;N_i,N_{i-1})\big] = \mathbf{0}$, and thus $\E_{\theta_0}[\frac{1}{\sqrt{n}}\dot{l}_{n}(\theta_0)] = \mathbf{0}$.

Turning now to the covariance matrix $I(\theta_0)$ of expression (\ref{WeakConvergence}), from the Tower property of conditional expectations we further obtain that
\begin{align*}
&\Cov\left(\frac{1}{\sqrt{n}}\dot{l}_{1,n}(\theta_0),\frac{1}{\sqrt{n}}\dot{l}_{2,n}(\theta_0)\right) =\\
&= \frac{1}{n}\sum_{j=1}^{n}\sum_{k=1}^{n} \Cov\left(\sum_{i\in\Omega(j\Delta)} \dot{l}_{Y}(\theta_0;Z_{i}), \dot{l}_{N}(\theta_0;N_k,N_{k-1})\right)\\
&= \frac{1}{n}\sum_{j,k=1}^{n}\E_{\theta_0}\Bigg[\sum_{i\in\Omega(j\Delta)} \underbrace{\E_{\theta_0}\left[\dot{l}_{Y}(\theta_0;Z_{i})\Big|\{\Omega(m\Delta)\}_{m=1}^{n}\right]}_{=0} \dot{l}_{N}(\theta_0;N_k,N_{k-1})^T\Bigg]\\
&= \mathbf{0},
\end{align*}
which in turn implies that
\beann
\Var_{\theta_0}\left(\frac{1}{\sqrt{n}}\dot{l}_{n}(\theta_0)\right)
= \Var_{\theta_0}\left(\frac{1}{\sqrt{n}}\dot{l}_{1,n}(\theta_0)\right) + \Var_{\theta_0}\left(\frac{1}{\sqrt{n}}\dot{l}_{2,n}(\theta_0)\right).
\eeann
However, from \cite{CronieYu} in combination with \cite{Yao}) we already have that the non-zero components of $\Var_{\theta_0}\big(\dot{l}_{2,n}(\theta_0)/\sqrt{n}\big)$ converge to the Fisher information $I_N(\theta_0)$ of expression (\ref{FisherImDea}).
Hence, in order to find the Fisher information $I(\theta_0)$ of expression (\ref{WeakConvergence}), it now only remains to show that the non-zero elements of $\Var_{\theta_0}\big(\dot{l}_{1,n}(\theta_0)/\sqrt{n}\big)$ converge to $I_Y(\theta_0)$, as $n\rightarrow\infty$.
By exploiting that $\E_{\theta_0}[N(k\Delta)|N(0)=0] = \frac{\alpha_0}{\mu_0}\big(1-\e^{-\mu_0 k\Delta}\big)$ (see Lemma \ref{ImDeTransProb}) and by noticing that $\lim_{n\rightarrow\infty}\frac{1}{n}\sum_{k=1}^{n}\left(1-\e^{-\mu_0 k\Delta}\right) = 1$, we obtain
\begin{align*}
&\Var_{\theta_0}\left(\frac{1}{\sqrt{n}}\dot{l}_{1,n}(\theta_0)\right)
= \frac{1}{n}\sum_{k=1}^{n} \E_{\theta_0}\left[\sum_{i\in\Omega(k\Delta)} \dot{l}_{Y}(\theta_0;Z_{i})\dot{l}_{Y}(\theta_0;Z_{i})^T\right]\\
&= \frac{1}{n}\sum_{k=1}^{n} \E_{\theta_0}\left[\sum_{i\in\Omega(k\Delta)} \E_{\theta_0}\left[\dot{l}_{Y}(\theta_0;Z_{i})\dot{l}_{Y}(\theta_0;Z_{i})^T\bigg|\Omega(k\Delta)\right]\right]\\
&= \frac{1}{n}\sum_{k=1}^{n} \E_{\theta_0}\Bigg[\sum_{i\in\Omega(k\Delta)} \underbrace{-\E_{\theta_0}\left[\ddot{l}_{Y}(\theta_0;Z_{i})\bigg|\Omega(k\Delta)\right]}_{=I_Z(\theta_0)}\Bigg] \\
&= I_Z(\theta_0) \frac{1}{n}\sum_{k=1}^{n} \E_{\theta_0}[N_k]\\
&= I_Z(\theta_0)\frac{1}{n}\sum_{k=1}^{n}\frac{\alpha_0}{\mu_0}\left(1-\e^{-\mu_0 k\Delta}\right)
\longrightarrow I_Z(\theta_0)\frac{\alpha_0}{\mu_0}.
\end{align*}

Through expressions (\ref{2orderPartialDeriavtivesMatrix}) and (\ref{2orderPartialDeriavtives}), it can now be checked that the above covariance matrix is given by
\beann
I_Z(\theta_0)\frac{\alpha_0}{\mu_0}
&=&
\begin{pmatrix}
I_Y(\theta_0)					&	\mathbf{0}_{2\times2}\\
\mathbf{0}_{2\times2}	&	\mathbf{0}_{2\times2}
\end{pmatrix},\\
I_Y(\theta_0)
&=&
\left\{
\begin{array}{lr}
\frac{\alpha_0}{\mu_0}
\begin{pmatrix}
\frac{2\lambda_{0}}{K_{0}^2\sigma_{0}^2} & 0 \\
0 & \frac{8\lambda_{0}C(\theta_0)}{\sigma_{0}^4}
\end{pmatrix}
& \text{if }\lambda_0\text{ is known}\\
\frac{\alpha_0}{\mu_0}
\begin{pmatrix}
\frac{2 C(\theta_0)}{\lambda_{0}\sigma_{0}^2} & 0\\
0 & \frac{2\lambda_{0}}{K_{0}^2\sigma_{0}^2}
\end{pmatrix}
& \text{if }\sigma_0\text{ is known,}
\end{array}
\right.
\eeann
where $C(\theta_0) = \frac{2\lambda_0}{\sigma_0^2}\psi'\left(\frac{2\lambda_0}{\sigma_0^2}\right) - 1 > 0$ and the positive definiteness of $I_Y(\theta_0)$ follows since clearly $\y^T I_Y(\theta_0)\y = y_1^2 I_Y(\theta_0)_{11} + y_2^2 I_Y(\theta_0)_{22}>0$, for any $\y\in\R^2$, $\y\neq\mathbf{0}$.

In order to finalise the convergence of expression (\ref{WeakConvergence}) it now only remains to show that the convergence to a Gaussian law holds.
Explicitly, by means of the Lindeberg-Feller theorem (see e.g.\ \cite{VanDerVaart}), we we want to show that, as $n\rightarrow\infty$, the sum $\frac{1}{\sqrt{n}}\dot{l}_{n}(\theta_0) = \sum_{k=1}^{n}(X_k + R_k)$, where
\beann
X_k + R_k
&=& \frac{1}{\sqrt{n}}\dot{l}_{\Y}\left(\theta;\Z_{k}\right) + \frac{1}{\sqrt{n}}\dot{l}_{N}(\theta_0;N_k,N_{k-1})\\
&=& \frac{1}{\sqrt{n}}\sum_{i\in\Omega(k\Delta)}\dot{l}_{Y}(\theta_0;Z_{i}) + \frac{1}{\sqrt{n}}\dot{l}_{N}(\theta_0;N_k,N_{k-1}),
\eeann
converges in law to the Gaussian distribution in expression (\ref{WeakConvergence}).
In order to do so we need to show that the Lindeberg condition of the Lindeberg-Feller CLT is satisfied for the sequence $X_k + R_k$,
i.e.\ for every $\varepsilon>0$,
\beann
S(n) = \sum_{k=1}^{n}\E_{\theta_0}\left[\left|X_k + R_k\right|^2 \1\{\left|X_k + R_k\right|>\varepsilon\}\right] \longrightarrow 0,
\eeann
as $n\rightarrow\infty$.

We note that by the triangle inequality,
\beann
|X_k + R_k|^2 &\leq& (|X_k| + |R_k|)^2 = |X_k|^2 + |R_k|^2 + 2|X_k||R_k|,\\
\1\{\left|X_k + R_k\right|>\varepsilon\}
&\leq&
\1\{\left|X_k\right| + \left|R_k\right|>\varepsilon\}\\
&\leq& \1\{\left|X_k\right|>\varepsilon/2\} + \1\{\left|R_k\right|>\varepsilon/2\},
\eeann
and since $\Cov(X_k,R_k)=0$ and $\E_{\theta_0}\left[\1\{|R_k|>\varepsilon\}\right] = \P\left(|R_k|>\varepsilon\right)$, we obtain that
\beann
S(n)
&\leq&
\sum_{k=1}^{n}\E_{\theta_0}\left[|X_k|^2 \1\{|X_k|>\varepsilon/2\}\right]
+ \sum_{k=1}^{n}\E_{\theta_0}\left[|R_k|^2 \1\{|R_k|>\varepsilon/2\}\right]\\
&&+ \sum_{k=1}^{n}\E_{\theta_0}\left[|X_k|^2\right] \P\left(|R_k|>\varepsilon/2\right)
+ \sum_{k=1}^{n}\E_{\theta_0}\left[|R_k|^2\right] \P\left(|X_k|>\varepsilon/2\right)\\
&&+ 2\sum_{k=1}^{n}\E_{\theta_0}\left[|R_k|\right]\E_{\theta_0}\left[|X_k| \1\{|X_k|>\varepsilon/2\}\right]\\
&&+ 2\sum_{k=1}^{n}\E_{\theta_0}\left[|X_k|\right]\E_{\theta_0}\left[|R_k| \1\{|R_k|>\varepsilon/2\}\right]\\
&=& S_1(n) + S_2(n) + S_3(n) + S_4(n) + S_5(n) + S_6(n).
\eeann

We now want to show that each of the six sums above converges to zero as $n$ tends to zero.
We see that checking that the sum $S_2(n)$ tends to zero, as $n\rightarrow\infty$, is to check that the Lindeberg condition is satisfied for the equivalent convergence in the discretely sampled ID-process,
and this already holds (see the combination of \cite{CronieYu} and \cite{Yao}).

Considering the first of the sums, $S_1(n)$, instead of proving that $S_1(n)\rightarrow0$ we will prove the stronger Lyapunov condition of the Lyapunov central limit theorem (see e.g.\ \cite{Hoadley}) for the random variables $X_k$.
Denote by $\left\|X\right\|_p :=\E\left[|X|^p\right]^{\frac{1}{p}}$ the $L^p(\mathcal{X},\mathcal{F},\P)$-norm of the random variable $X$ and recall the bounds $K_j(Z_1)$, $j=1,2$, of the elements of $\dot{l}_{Y}(\theta_0;Z_{i})$.
Given $p>2$ and any $\y\in\R^4$, by the conditional version of Minkowski's inequality, we have that
\begin{align*}
&\E_{\theta_0}\left[\left|\y^T X_k\right|^{p}\Big|\Omega(k\Delta)\right]
= \left\|\left(\frac{1}{\sqrt{n}}\sum_{i\in\Omega(k\Delta)} \y^T \dot{l}_{Y}(\theta_0;Z_{i})\Bigg|\Omega(k\Delta)\right)\right\|_p^p\\
&= \frac{1}{n^{p/2}}\left\|\left(\sum_{i\in\Omega(k\Delta)} \sum_{j=1}^{2} y_j\dot{l}_{Y}(\theta_0;Z_{i})_j\Bigg|\Omega(k\Delta)\right)\right\|_p^p\\
&\leq \frac{1}{n^{p/2}}\sum_{i\in\Omega(k\Delta)} \sum_{j=1}^{2}|y_j|^p \left\|\left(\dot{l}_{Y}(\theta_0;Z_{i})_j\Big|\Omega(k\Delta)\right)\right\|_p^p\\
&= \frac{1}{n^{p/2}}\sum_{i\in\Omega(k\Delta)} \sum_{j=1}^{2}|y_j|^p \E_{\theta_0}\left[\left|\dot{l}_{Y}(\theta_0;Z_{i})_j\right|^p\bigg|\Omega(k\Delta)\right]\\
&\leq \frac{1}{n^{p/2}}\sum_{i\in\Omega(k\Delta)} \sum_{j=1}^{2}|y_j|^p \E_{\theta_0}\left[K_j(Z_{1})^p\right],
\end{align*}
which in turn implies that
\beann
\E_{\theta_0}\left[\left|\y^T X_k\right|^{p}\right]
&=& \E_{\theta_0}\left[\E_{\theta_0}\left[\left|\y^T X_k\right|^{p}\Big|\Omega(k\Delta)\right]\right] \\
&\leq& \frac{1}{n^{p/2}} \E_{\theta_0}[N_k] \sum_{j=1}^{2}|y_j|^p \E_{\theta_0}\left[K_j(Z_{1})^p\right]\\
&\leq& \frac{1}{n^{p/2}} \frac{\alpha_0}{\mu_0}(1-\e^{-\mu_0 k\Delta}) \sum_{j=1}^{2}|y_j|^p \E_{\theta_0}\left[K_j(Z_{1})^p\right].
\eeann
We will now deal with this expression when $p=4$.
In the case of $p=4$ the finiteness of $\E_{\theta_0}\left[K_j(Z_{i})^p\right]$ holds for any compact parameter space $\Theta$ (alternatively $S_{\rho}$) since $\E_{\theta_0}[\log(Z_1)^4]$ and $\E_{\theta_0}[Z_1^4]$ are finite (their expressions can be found in Section \ref{SectionAppendixDerivatives} in the Appendix).
By further also noticing that $\sum_{j=1}^{2}|y_j|^4 < \infty$, $\frac{\alpha_0}{\mu_0} < \infty$ and by recalling that $\frac{1}{n}\sum_{k=1}^{n}\left(1-\e^{-\mu_0 k\Delta}\right)\rightarrow1$,
when choosing $p=4$ and letting $n\rightarrow\infty$, we finally obtain that the right hand side of the above expression tends to zero, whereby $\E_{\theta_0}\left[\left|\y^T X_k\right|^{p}\right]\rightarrow0$.
Hence, the Lyapunov condition $\sum_{k=1}^{n}\E_{\theta_0}\left[\left|\y^T X_k\right|^{4}\right] \rightarrow 0$ is satisfied,
and hereby the Lindeberg condition $S_1(n)\rightarrow0$ follows.

The next convergence proved is $\lim_{n\rightarrow\infty}S_3(n) = 0$.
From the above derivations, we now additionally see that
\beann
\lim_{n\rightarrow\infty} \sum_{k=1}^{n}\E_{\theta_0}\left[|X_k|^{2}\right]
&\leq&
\frac{\alpha_0}{\mu_0} \sum_{j=1}^{2} \E_{\theta_0}\left[K_j(Z_{1})^2\right] \lim_{n\rightarrow\infty}\frac{1}{n} \sum_{k=1}^{n} (1-\e^{-\mu_0 k\Delta}) \\
&=&
\frac{\alpha_0}{\mu_0} \sum_{j=1}^{2} \E_{\theta_0}\left[K_j(Z_{1})^2\right] < \infty.
\eeann
Given $\rho(k) = \frac{\alpha_0}{\mu_0}(1-\e^{-\mu_0 k\Delta})$, recall from Lemma \ref{ImDeTransProb} that $\E[N(h+t)|N(h)=i]=i\e^{-\mu t} + \rho$ and $\E[N^2(h+t)|N(h)=i] = i(i-1)\e^{-2\mu t} + (1+2\rho)i\e^{-\mu t} + \rho^2 + \rho$.
By exploiting Markov's inequality and considering bounds given in \cite{CronieYu}, for each $k=1,\ldots,n$, we have that
\begin{align*}
&\P\left(|R_k|>\varepsilon/2\right)
< \frac{1}{\varepsilon/2} \E_{\theta_0}[|R_k|]\\
&= \frac{1}{\sqrt{n}\varepsilon/2} \E_{\theta_0}[|\dot{l}_{N}(\theta_0;N_k,N_{k-1})|]\\
&< \frac{1}{\sqrt{n}\varepsilon/2} \E_{\theta_0}\left[\frac{N_k}{\alpha_0} + \Delta + \frac{\alpha_0\Delta^2 + (3N_k + N_{k-1})\Delta}{1-\e^{-\mu_0\Delta}}\right]\\
&= \frac{2\Delta}{\sqrt{n}\varepsilon} \left(\frac{\E_{\theta_0}[N_k]}{\Delta\alpha_0} + 1 + \frac{\alpha_0\Delta + (3\E_{\theta_0}[N_k] + \E_{\theta_0}[N_{k-1}])}{1-\e^{-\mu_0\Delta}}\right)\\
&= \frac{2\Delta}{\sqrt{n}\varepsilon}
\left(\frac{1-\e^{-\mu_0 k\Delta}}{\Delta\mu_0} + 1 + \alpha_0\frac{\Delta + \frac{1}{\mu_0}(3(1-\e^{-\mu_0 k\Delta}) + (1-\e^{-\mu_0 (k-1)\Delta}))}{1-\e^{-\mu_0\Delta}}\right)\\
&< \frac{2\Delta}{\sqrt{n}\varepsilon}
\left(\frac{1}{\Delta\mu_0} + 1 + \alpha_0\frac{\Delta + \frac{4}{\mu_0}}{1-\e^{-\mu_0\Delta}}\right)
= \frac{2}{\sqrt{n}\varepsilon}C_{1,R}
\longrightarrow 0,
\end{align*}
as $n\rightarrow\infty$, whence $\sup_{1\leq k \leq n}\P\left(|R_k|>\varepsilon\right) \rightarrow 0$.
It now readily follows that
\beann
\lim_{n\rightarrow\infty}S_3(n) \leq \lim_{n\rightarrow\infty}\sup_{1\leq k \leq n}\P\left(|R_k|>\varepsilon\right) \sum_{k=1}^{n}\E_{\theta_0}\left[|X_k|^{2}\right] = 0.
\eeann
We note further that from the above arguments it also follows that $\E_{\theta_0}[|X_k|] < \infty$ and $\E_{\theta_0}[|R_k|] < \infty$.

We now turn to
\beann
S_4(n) = \sum_{k=1}^{n}\E_{\theta_0}\left[|R_k|^2\right] \P\left(|X_k|>\varepsilon/2\right).
\eeann
From Lemma \ref{ImDeTransProb} we find that
\beann
\E_{\theta_0}[N_k N_{k-1}] &=& \E_{\theta_0}[\E_{\theta_0}[N_k|N_{k-1}]N_{k-1}]\\
&=& \E_{\theta_0}[(N_{k-1}\e^{-\mu_0 k\Delta} + \rho(k))N_{k-1}]\\
&=& \rho(k-1)(\rho(k-1)+1)\e^{-\mu_0 k\Delta} + \rho(k)\rho(k-1)\\
&<& 2\frac{\alpha_0}{\mu_0}\left(\frac{\alpha_0}{\mu_0} + 1\right),
\eeann
and since $\rho(k)<\alpha_0/\mu_0$ for any $k$, we note that according to \cite{CronieYu} we have that
\begin{align*}
&\E_{\theta_0}\left[|R_k|^2\right]
=
\frac{1}{n}\E_{\theta_0}\left[\left|\dot{l}_{N}(\theta_0;N_k,N_{k-1})\right|^2 \right]\\
&< \frac{1}{n}\E_{\theta_0}\left[\left(\frac{N_k}{\alpha_0} + \Delta\right)^2 + \left(\frac{\alpha_0\Delta^2 + (3N_k + N_{k-1})\Delta}{1-\e^{-\mu_0\Delta}}\right)^2\right]\\
&= \frac{1}{n}\Bigg(\frac{1}{\alpha_0^2}\E_{\theta_0}\left[N_k^2\right] + \Delta^2 + \frac{\alpha_0^2\Delta^4}{(1-\e^{-\mu_0\Delta})^2}
 + \frac{2\alpha_0\Delta^3(3\E_{\theta_0}[N_k] + \E_{\theta_0}[N_{k-1}])}{(1-\e^{-\mu_0\Delta})^2}\\
&+ \frac{\Delta^2(9\E_{\theta_0}[N_k^2] + 6\E_{\theta_0}[N_{k}N_{k-1}] + \E_{\theta_0}[N_{k-1}^2])}{(1-\e^{-\mu_0\Delta})^2}
\Bigg)\\
&< \frac{1}{n}\Bigg(\frac{1}{\alpha_0^2}\frac{\alpha_0}{\mu_0}\left(\frac{\alpha_0}{\mu_0} + 1\right) + \Delta^2 + \frac{\alpha_0^2\Delta^4}{(1-\e^{-\mu_0\Delta})^2}\\
& + \frac{8\alpha_0\Delta^3\frac{\alpha_0}{\mu_0} + \Delta^2 22\frac{\alpha_0}{\mu_0}\left(\frac{\alpha_0}{\mu_0} + 1\right)}{(1-\e^{-\mu_0\Delta})^2}
\Bigg)
= \frac{1}{n}\underbrace{C_{2,R}}_{<\infty}.
\end{align*}
By Markov's inequality we have that
\beann
\P\left(|X_k|>\varepsilon/2\right)
&<& \frac{1}{\varepsilon/2} \E_{\theta_0}[|X_k|]\\
&\leq& \frac{1}{\sqrt{n}\varepsilon/2} \frac{\alpha_0}{\mu_0}(1-\e^{-\mu_0 k\Delta}) \sum_{j=1}^{2} \E_{\theta_0}\left[K_j(Z_{1})^p\right],
\eeann
whereby, as $n\rightarrow\infty$, it follows that
\beann
S_4(n) < \frac{1}{\sqrt{n}\varepsilon/2}\underbrace{\frac{1}{n} \sum_{k=1}^{n}(1-\e^{-\mu_0 k\Delta})}_{\rightarrow 1}
\underbrace{\sum_{j=1}^{2} \E_{\theta_0}\left[K_j(Z_{1})^p\right]C_{2,R}\frac{\alpha_0}{\mu_0}}_{<\infty} \longrightarrow 0.
\eeann

Dealing finally with the convergence to zero of the last two sums,
by recalling the bound $\E_{\theta_0}[|R_k|]<\frac{1}{\sqrt{n}}C_{1,R}$, we now see that
\beann
S_5(n)
&=& 2\sum_{k=1}^{n}\E_{\theta_0}[|R_k|] \E_{\theta_0}\left[|X_k| \1\{|X_k|>\varepsilon/2\}\right]\\
&<&
\frac{2}{\sqrt{n}}C_{1,r}
\sum_{k=1}^{n} \E_{\theta_0}\left[\frac{|X_k|^2}{\varepsilon/2} \1\{|X_k|>\varepsilon/2\}\right]\\
&=& \frac{4}{\sqrt{n}\varepsilon}C_{1,r} S_1(n)
\longrightarrow 0,
\eeann
as $n\rightarrow\infty$.
Similarly, as $n\rightarrow\infty$, we also obtain
\beann
S_6(n)
&=& 2\sum_{k=1}^{n} \E_{\theta_0}[|X_k|]\E_{\theta_0}\left[|R_k| \1\{|R_k|>\varepsilon/2\}\right]\\
&\leq&
\frac{4}{\sqrt{n}\varepsilon} \frac{\alpha_0}{\mu_0} \sum_{j=1}^{2} \E_{\theta_0}\left[K_j(Z_{1})\right]\\
&&\times \sum_{k=1}^{n} (1-\e^{-\mu_0 k\Delta}) \E_{\theta_0}\left[|R_k|^2 \1\{|R_k|>\varepsilon/2\}\right]\\
&<&
\frac{1}{\sqrt{n}} \underbrace{\frac{4\alpha_0}{\varepsilon\mu_0} \sum_{j=1}^{2} \E_{\theta_0}\left[K_j(Z_{1})\right]}_{<\infty}
S_2(n) \longrightarrow 0.
\eeann

Hence, as $\varepsilon>0$ was arbitrary, we conclude that $\lim_{n\rightarrow\infty}S(n) = 0$, and hereby the convergence of expression (\ref{WeakConvergence}) follows.\\

\underline{The a.s.\ convergence of $-B_{n}(\hat{\theta}_{n})$ to $I(\theta_0)$}:
We first note that by writing
\beann
I(\theta_0) = \tilde{I}_Y(\theta_0) + \tilde{I}_N(\theta_0)
=
\begin{pmatrix}
I_Y(\theta_0)							& \mathbf{0}_{2\times2}\\
 \mathbf{0}_{2\times2}		& \mathbf{0}_{2\times2}
\end{pmatrix}
+
\begin{pmatrix}
\mathbf{0}_{2\times2}	& \mathbf{0}_{2\times2}\\
\mathbf{0}_{2\times2}	& I_N(\theta_0)
\end{pmatrix}
\eeann
we have that
\begin{align}
\left|-B_{n}(\hat{\theta}_{n}) - I(\theta_0)\right|
&=			\left|\frac{1}{n} \int_{0}^{1} \ddot{l}_{n}(\theta_0 + x(\hat{\theta}_{n}-\theta_0))dx + I(\theta_0)\right|\nn\\
&\leq	\left|\int_{0}^{1} \frac{1}{n} \sum_{k=1}^{n} \ddot{l}_{\Y}(\theta_0 + x(\hat{\theta}_{n}-\theta_0);\Z_{k})dx + \tilde{I}_Y(\theta_0)\right|\nn\\
&+ 		\left|\int_{0}^{1} \frac{1}{n} \sum_{k=1}^{n} \ddot{l}_{N}(\theta_0 + x(\hat{\theta}_{n}-\theta_0);N_k,N_{k-1})dx + \tilde{I}_N(\theta_0)\right|\nn\\
&=			D_Y(n) + D_N(n).\nn
\end{align}

Denote by $l_n(\alpha_0,\mu_0)$ the ID-process likelihood and by $B_{n}^N(\hat{\alpha}_{n},\hat{\mu}_{n})$ the remainder term used in the Taylor expansion of the corresponding proof of the asymptotic normality in \cite{CronieYu};
\beann
\sqrt{n}\left((\hat{\alpha}_{n}, \hat{\mu}_{n})-(\alpha_{0}, \mu_{0})\right)^T = \left(-B_{n}^N(\hat{\alpha}_{n},\hat{\mu}_{n})\right)^{-1}\frac{1}{\sqrt{n}}l_n(\alpha_0,\mu_0).
\eeann
That $D_N(n)\stackrel{a.s.}{\longrightarrow}0$, or equivalently that $-B_{n}^N(\hat{\alpha}_{n},\hat{\mu}_{n})\stackrel{a.s.}{\longrightarrow}I_N(\theta_0)$, as $n\rightarrow\infty$, then follows from  \cite{CronieYu} (in combination with \cite{Yao} (Theorem 2)).

Hence, it now only remains to show that $D_Y(n)\stackrel{a.s.}{\longrightarrow}0$.
From the dominated convergence theorem (recall the continuity and the bounds of $\ddot{l}_{Y}(\theta_0;z)$) we obtain the $\theta$--continuity of $\E_{\theta_0}\big[\ddot{l}_{Y}(\theta;Z_{i})\big]$, whereby the continuity of $\gamma(\theta):=\frac{\alpha_0}{\mu_0}\E_{\theta_0}\big[\ddot{l}_{Y}(\theta;Z_{i})\big]$ follows.
Hereby, for every $\varepsilon>0$, there exists a $\rho>0$ such that for $\theta \in S_{\rho} = \{\theta\in\Theta:d(\theta,\theta_0)\leq\rho\}$ we have
\beann
\left|\gamma(\theta) - \gamma(\theta_0)\right|
=
\left|\gamma(\theta) + \tilde{I}_Y(\theta_0)\right| < \varepsilon/2.
\eeann

Furthermore, from \cite{CronieYu} we have the following strong law of large numbers for $N(t)$:
As $n\rightarrow\infty$, for any $\pi_N(\cdot;\theta_0)$-integrable function $\vartheta:\N\rightarrow\R$ we obtain
\beann
\frac{1}{n}\sum_{k=1}^{n}\vartheta(N(k\Delta)) \stackrel{a.s.}{\longrightarrow} \sum_{x\in\N}\vartheta(x)\pi_{N}(x;\theta_0).
\eeann
By combining this with Lemma \ref{ImDeTransProb} we further obtain that
$\frac{A(n)}{n} = \frac{1}{n}\sum_{k=1}^{n}N_k \stackrel{a.s.}{\longrightarrow}\sum_{x\in\N}x\pi_{N}(x;\theta_0) = \frac{\alpha_0}{\mu_0}$.
Through Slutsky's lemma (see e.g.\ \cite{Ferguson}), when combining this convergence with Lemma \ref{USLLN} (recall the continuity  and the bounds related to $\ddot{l}_{Y}(\theta;Z_{i})$), we get that
\beann
\sup_{\theta\in S_{\rho}} \left|\frac{1}{n}\sum_{i=1}^{A(n)} \ddot{l}_{Y}(\theta;Z_{i}) - \frac{A(n)}{n}\E_{\theta_0}\left[\ddot{l}_{Y}(\theta;Z_{i})\right]\right| &=&\\
= \sup_{\theta\in S_{\rho}} \left|\frac{1}{A(n)}\sum_{i=1}^{A(n)} \ddot{l}_{Y}(\theta;Z_{i}) - \E_{\theta_0}\left[\ddot{l}_{Y}(\theta;Z_{i})\right]\right|\frac{A(n)}{n}
&\stackrel{a.s.}{\longrightarrow}& 0.
\eeann
But since
$\frac{A(n)}{n}\E_{\theta_0}\big[\ddot{l}_{Y}(\theta;Z_{i})\big] \stackrel{a.s.}{\longrightarrow} \frac{\alpha_0}{\mu_0}\E_{\theta_0}\big[\ddot{l}_{Y}(\theta;Z_{i})\big]$,
as $n\rightarrow\infty$, we have that
\beann
\sup_{\theta\in S_{\rho}} \left|\frac{1}{n}\sum_{i=1}^{A(n)} \ddot{l}_{Y}(\theta;Z_{i}) - \frac{\alpha_0}{\mu_0}\E_{\theta_0}\big[\ddot{l}_{Y}(\theta;Z_{i})\big]\right| \stackrel{a.s.}{\longrightarrow} 0.
\eeann
Hence, there a.s.\ is some integer $n^*$ such that $n>n^*$ implies that
\beann
\sup_{\theta\in S_{\rho}} \left|\frac{1}{n}\sum_{i=1}^{A(n)} \ddot{l}_{Y}(\theta;Z_{i}) - \gamma(\theta)\right| < \varepsilon/2.
\eeann

Now, by choosing $n^*$ large enough to have $\hat{\theta}_{n}\in S_{\rho}$, when $n>n^*$ we have that
\beann
B_Y(n)
&\leq& \int_{0}^{1} \left|\frac{1}{n} \sum_{i=1}^{A(n)} \ddot{l}_{Y}(\theta_0 + x(\hat{\theta}_{n}-\theta_0);Z_{i}) + \tilde{I}_Y(\theta_0)\right|dx\\
&\leq& \int_{0}^{1} \sup_{\theta\in S_{\rho}}
\left\{
\left|\frac{1}{n}\sum_{i=1}^{A(n)} \ddot{l}_{Y}(\theta;Z_{i}) - \gamma(\theta)\right| + \left|\gamma(\theta) + \tilde{I}_Y(\theta_0)\right|\right\}dx \leq \varepsilon.
\eeann

We now finally find the inverse of the Fisher information, $I(\theta_0)^{-1}$, which is given by the covariance matrix of the asymptotic Gaussian distribution of $\sqrt{n}(\hat{\theta}_n - \theta_0)$ in the statement of the theorem.

\end{proof}

\subsection{Log-likelihood derivatives}\label{SectionAppendixDerivatives}

When $Z\sim\Gamma(2\lambda_0/\sigma_0^2,\sigma_0^2 K_0/2\lambda_0)$ we have that
\[
\begin{array}{lll}
\E_{\theta_0}[Z] &=& K_0,\\
\E_{\theta_0}[Z^4] &=& K_0^4(\lambda_0 + \sigma_0^2)(2\lambda_0 + \sigma_0^2)(2\lambda_0 + 3\sigma_0^2)/4\lambda_0^3,\\
\E_{\theta_0}[\log Z] &=& \log(K_0\sigma_0^2/2\lambda_0) + \psi(2\lambda_0/\sigma_0^2),\\
\E_{\theta_0}[\log(Z)^2] &=&
\log\left(\frac{2\lambda_0}{K_0\sigma_0^2}\right)^2 - 2\log\left(\frac{2\lambda_0}{K_0\sigma_0^2}\right)\psi\left(\frac{2\lambda_0}{\sigma_0^2}\right)
+ \psi\left(\frac{2\lambda_0}{\sigma_0^2}\right)^2\\
&&+ \psi'\left(\frac{2\lambda_0}{\sigma_0^2}\right),
\end{array}
\]
where $\psi(x)=\Gamma'(x)/\Gamma(x)$, $\Gamma(x)$ is the gamma function, and\\
\begin{align}
&\E_{\theta_0}[\log(Z)^4]
= \log(2)^4
+ 4\log(2)^3\log\left(\frac{\lambda_{0}}{K_{0}\sigma_{0}^2}\right)
+ 6\log(2)^2\log\left(\frac{\lambda_{0}}{K_{0}\sigma_{0}^2}\right)^2\nn\\
&+ \log(16)\log\left(\frac{\lambda_{0}}{K_{0}\sigma_{0}^2}\right)^3
+ \log\left(\frac{\lambda_{0}}{K_{0}\sigma_{0}^2}\right)^4
- 4\log\left(\frac{2\lambda_{0}}{K_{0}\sigma_{0}^2}\right)\psi\left(\frac{2\lambda_{0}}{\sigma_{0}^2}\right)^3\nn\\
&+ \psi\left(\frac{2\lambda_{0}}{\sigma_{0}^2}\right)^4
+ 6 \log\left(\frac{2\lambda_{0}}{K_{0}\sigma_{0}^2}\right)^2\psi'\left(\frac{2\lambda_{0}}{\sigma_{0}^2}\right)
+ 3\psi'\left(\frac{2\lambda_{0}}{\sigma_{0}^2}\right)^2 \nn\\
&+ 6\psi\left(\frac{2\lambda_{0}}{\sigma_{0}^2}\right)^2\left(\log\left(\frac{2\lambda_{0}}{K_{0}\sigma_{0}^2}\right)^2
+ \psi'\left(\frac{2\lambda_{0}}{\sigma_{0}^2}\right)\right)\nn\\
&- 4\psi\left(\frac{2\lambda_{0}}{\sigma_{0}^2}\right)
\Bigg[\log\left(\frac{2\lambda_{0}}{K_{0}\sigma_{0}^2}\right)^3\nn\\
&+ \left(\log(8) + 3\log\left(\frac{\lambda_{0}}{K_{0}\sigma_{0}^2}\right)\right)\psi'\left(\frac{2\lambda_{0}}{\sigma_{0}^2}\right)
- \psi''\left(\frac{2\lambda_{0}}{\sigma_{0}^2}\right)\Bigg]\nn\\
&- 4\log(2)\psi''\left(\frac{2\lambda_{0}}{\sigma_{0}^2}\right)
- 4\log\left(\frac{\lambda_{0}}{K_{0}\sigma_{0}^2}\right)\psi''\left(\frac{2\lambda_{0}}{\sigma_{0}^2}\right)
+ \psi'''\left(\frac{2\lambda_{0}}{\sigma_{0}^2}\right).\nn
\end{align}

Note that these expressions readily can be obtained by using the software \emph{Mathematica}.

Furthermore, by writing $l_{Y}(\theta;z) = \log\pi(z;\lambda,K,\sigma)$ for simplicity, the partial derivatives of $l_{Y}(\theta;z)$ w.r.t. $\lambda$, $K$ and $\sigma$ are given by
\bea
\label{2orderPartialDeriavtives}
\frac{\partial l_{Y}(\theta;z)}{\partial\lambda}
&=& \frac{\log(z) - \frac{z}{K} + 1 + \log(2) - \log\left(\frac{K\sigma^2}{\lambda}\right) - \psi\left(\frac{2\lambda}{\sigma^2}\right)}{\sigma^2/2}\\
\frac{\partial l_{Y}(\theta;z)}{\partial K} 											&=& \frac{2\lambda(z-K)}{\sigma^2K^2}\nn\\
\frac{\partial l_{Y}(\theta;z)}{\partial\sigma}
&=& \frac{\frac{z}{K} - \log(z) - \log(2) - 1 + \log\left(\frac{K\sigma^2}{\lambda}\right) + \psi\left(\frac{2\lambda}{\sigma^2}\right)}{\sigma^3/4\lambda}\nn\\
\frac{\partial^2 l_{Y}(\theta;z)}{\partial\lambda^2} 							&=& \frac{2}{\lambda\sigma^4}\left(\sigma^2 - 2\lambda \psi'\left(\frac{2\lambda}{\sigma^2}\right)\right), \qquad
\frac{\partial^2 l_{Y}(\theta;z)}{\partial\lambda \partial K} 		= \frac{2(z - K)}{\sigma^{2}K^{2}}\nn\\
\frac{\partial^2 l_{Y}(\theta;z)}{\partial\lambda \partial\sigma} &=& \frac{\frac{z}{K} - \log(z) - 2 - \log(2) + \log\left(\frac{\sigma^{2} K}{\lambda}\right) +
																							 \psi\left(\frac{2\lambda}{\sigma^2}\right) + \frac{2\lambda}{\sigma^2}\psi'\left(\frac{2\lambda}{\sigma^2}\right)}{\sigma^{3}/4}\nn\\
\frac{\partial^2 l_{Y}(\theta;z)}{\partial K\partial\sigma} 			&=& \frac{4\lambda(K - z)}{\sigma^3 K^2},\qquad
\frac{\partial^2 l_{Y}(\theta;z)}{\partial K^2} 									= \frac{2\lambda(K - 2z)}{\sigma^{2} K^3}\nn\\
\frac{\partial^2 l_{Y}(\theta;z)}{\partial\sigma^2} 							&=& \frac{-\frac{z}{K} + \log(z) + \frac{5 + \log(8)}{3} - \log\left(\frac{\sigma^2 K}{\lambda}\right) -
																				 \psi\left(\frac{2\lambda}{\sigma^2}\right) - \frac{4\lambda}{3\sigma^2}\psi'\left(\frac{2\lambda}{\sigma^2}\right)}{\sigma^4/12\lambda}.\nn
\eea

\bibliographystyle{plain}
\bibliography{references}

\end{document}